\newtheorem{Theorem}{Theorem}
\newtheorem{Proposition}[Theorem]{Proposition}
\newtheorem{Remark}[Theorem]{Remark}
\newenvironment{proof}[1][Proof]{\noindent\textbf{#1.} }{\ \rule{0.5em}{0.5em}}
\title{Entropy monotonicity and superstable cycles for the quadratic family revisited}
\author{Jos\'{e} M. Amig\'{o} and Angel Gim\'{e}nez\\
Centro de Investigaci\'{o}n Operativa, \\  Universidad Miguel Hern\'{a}ndez de Elche, \\ Avda. de la Universidad s/n, 03202 Elche, Spain \\
  % Department of Computer Science\\
  % Cranberry-Lemon University\\
  % Pittsburgh, PA 15213 \\
  \texttt{jm.amigo@umh.es, a.gimenez@umh.es} 
}
\begin{document}
%%%%%%%%%%%%%%%%%%%%%%%%%%%%%%%%%%%%%%%%%%%%%%%

%%%%%%%%%%%%%%%%%%%%%%%%%%%%%%%%%%%%%%%%%%%%%%%
\maketitle

\begin{abstract}
The main result of this paper is a proof using real analysis of the monotonicity of the topological entropy for the family of quadratic maps, sometimes called Milnor's Monotonicity Conjecture. In contrast, the existing proofs rely in one way or another on complex analysis. Our proof is based on tools and algorithms previously developed by the authors and collaborators to compute the topological entropy of multimodal maps. Specifically, we use the number of transverse intersections of the map iterates with the so-called critical line. The approach is technically simple and geometrical. The same approach is also used to briefly revisit the superstable cycles of the quadratic maps, since both topics are closely related.
\end{abstract}

% keywords can be removed
\keywords{topological entropy \and quadratic maps \and Milnor's
Monotonicity Conjecture \and superstable cycles \and root branches \and transversality}

%\textbf{AMS Subject Classification:} 37E05, 39A50, 37G35, 37N35

%%%%%%%%%%%%%%%%%%%%%%%%%%%%%%%%%%%%%%%%%%%%%%%
\section{Introduction}\label{section1}

Topological entropy is one of the main quantifiers of complexity in
continuous dynamics. First of all, it is a tight upper bound of all
measure-preserving dynamics generated by a given continuous self-map of a
compact metric space \cite{Walters1982}. Also in metric spaces, topological
entropy measures the growth rate of the number of ever longer orbits up to a
small error \cite{Bowen1971}. Its analytical calculation is only feasible in
some special cases, though. For one-dimensional dynamics, where
transformations can be supposed to be continuous and piecewise monotone
(multimodal) for practical purposes, a number of numerical algorithms based
on symbolic representations of the orbits have been developed. Examples
include kneading invariants \cite{Milnor1988}, min-max symbols \cite{Dias1},
ordinal patterns \cite{Bandt2002}, context trees \cite{Hirata2003} and more.
Precisely, this paper is the outgrowth of previous work by the authors and
collaborators on the numerical computation of the topological entropy of
multimodal maps using min-max symbols \cite{Amigo2012,Amigo2014,Amigo2015}.
At the heart of our algorithms is the number of transverse intersections
(i.e., \textquotedblleft X-crossings\textquotedblright ) of a multimodal map
and its iterates with the so-called critical lines. In this paper we also
show the potential of this concept in regard to theoretical issues. To this
end we revisit two well-traveled topics in one-dimensional dynamics:

\begin{description}
\item[(i)] the monotonicity of the topological entropy for the family of
quadratic maps, and

\item[(ii)] some basic properties of the periodic orbits of its critical
point (superstable cycles).
\end{description}

\noindent Next, we elaborate a bit on these two topics.

The family of quadratic maps (or quadratic family) is composed of the
logistic maps $f_{\mu }(x)=4\mu x(1-x)$, $0\leq \mu \leq 1$, or, for that
matter, any other dynamically equivalent maps; actually, we will use the
maps $q_{t}(x)=t-x^{2}$, $0\leq t\leq 2$, because they are algebraically
handier. When the topological entropy of multimodal maps was studied in the
1980s, the numerical results indicated that the topological entropy of the
quadratic family was a monotone function of the parameter. This property
entered the literature as \textit{Milnor's Monotonicity Conjecture},
although what he actually conjectured was the connectivity of the isentropes
(i.e., the sets of parameters for which the topological entropy is constant)
of the cubic maps in \cite{Milnor1992}, when the monotonicity of the
topological entropy for the quadratic family had already been proved by
himself (in collaboration with W. Thurston) \cite{Milnor1988} as well as by
other authors \cite{Douady1984,Douady1995,Tsujii2000}. According to \cite{Bruin2013,Bruin2015}, all these proofs use that the quadratic map can be
extended to the complex plane and require tools from complex analysis. At
variance, the proof of Milnor's Monotonicity Conjecture presented in this
paper (Section \ref{section4}, Theorem \ref{Theo9}) uses only real analysis. The conjecture was
later generalized to multimodal maps and was recently proved in \cite{Bruin2015}.

Points where a multimodal map achieves its local extrema are generically
called critical points (also when the map is not differentiable there); for
example, the critical point of $f_{\mu }$ is $x=1/2$ for all $\mu $, while
the critical point of $q_{t}$ is $x=0$ for all $t$. Orbits generated by any
of the critical points play an important role not only in symbolic dynamics
(via, e.g., the kneading invariants) but also in the stability of fixed
points and periodic attractors. Thus, unimodal maps with a negative
Schwarzian derivative (except at the critical point) and an invariant
boundary, such as the quadratic maps, have at most one stable periodic
orbit, namely, the one (if any) whose attraction basin contains the critical
point \cite{Singer1978,Melo1993}; these are the periodic attractors that can be seen
in the bifurcation diagram (Section \ref{section3}). On the other hand, if the critical
point of a quadratic map is eventually periodic, then the periodic cycle is
unstable \cite{Douady1984}.

However, we will only touch upon stability in passing. The reason why we
include the superstable cycles of the quadratic maps in this paper is
two-fold. First and least, the mathematical techniques used to deal with
both topics, entropy monotonicity and superstable cycles, are similar, so we
can easily exploit this fact. More importantly, the relationship between
these topics is deeper than might be thought. Indeed, Thurston's Rigidity, a
result on the periodic orbits of the critical points of quadratic maps,
implies Milnor's Monotonicity Conjecture for the quadratic maps and it is
necessary (in a generalized version) to prove the case of polynomial maps of
higher degrees \cite{Bruin2015}. Here we will give only a general idea of
this relationship. In addition, we will briefly discuss the
\textquotedblleft dark lines\textquotedblright\ through the chaotic bands of
the bifurcation diagram of the quadratic family, which relate to the orbit
of the critical point, as well as the parameter values for which the
critical point is eventually periodic (Misiurewicz points).

This paper is organized as follows. In Section \ref{section2} we introduce the
mathematical background needed for the following sections. In particular, we
introduce the expression of the topological entropy for multimodal maps via
the number of transverse crossings of its iterates with the critical lines.
More specific concepts and tools that refer to the family of the quadratics
maps (Section \ref{section3}) are discussed in Section \ref{section31} (root branches) and Section \ref{section32} (smoothness domains of the root branches). Root branches have many
interesting properties but we only address those we need for our purposes.
The materials of Sections \ref{section2} and \ref{section3} will then be used in two complementary
ways. The proof in Section \ref{section4} that the smoothness domains of the root
branches are half-intervals lead to the monotonicity of the topological
entropy for the quadratic family. The bifurcation points of some root
branches lead to the basic properties and parameter values of the
superstable cycles of the quadratic family (Section \ref{section51}). The latter topic
will be completed with a short digression on the eventually periodic orbits
of the critical point (Section \ref{section52}).

\section{Mathematical preliminaries}\label{section2}

\subsection{Multimodal maps}\label{section21}

Let $I$ be a compact interval $[a,b]\subset \mathbb{R}$ and $f:I\rightarrow
I $ be a \textit{piecewise monotone} continuous map. Such a map is called $l$-\textit{modal} if $f$ has local extrema at precisely $l$ interior points $c_{1}<...<c_{l}$. Moreover, we assume that $f$ is strictly monotone in each of the $l+1$
intervals\begin{equation*}
I_{1}=[a,c_{1}),I_{2}=(c_{1},c_{2}),...,I_{l}=(c_{l-1},c_{l}),I_{l+1}=(c_{l},b]\text{.}
\end{equation*}The points $c_{1},...,c_{l}$ are called \textit{critical }or \textit{turning}
\textit{points} and their images $f(c_{1})$, ..., $f(c_{l})$ are the \textit{critical values} of $f$. These maps are also referred to as multimodal maps
(for a general $l$) and unimodal maps (if $l=1$). We denote the set of $l$-modal maps by $\mathcal{M}_{l}(I)$, or just $\mathcal{M}_{l}$ if the
interval $I$ is clear from the context or unimportant for the argument. $f\in \mathcal{M}_{l}(I)$ is said to have \textit{positive} (resp. \textit{negative}) \textit{shape} if $f(c_{1}$) is a maximum (resp. minimum); here
and hereafter, all extrema are meant to be local unless stated otherwise.
Thus, if $f$ has positive shape, then $f$ is strictly increasing in the
intervals with odd subindex ($I_{\text{odd}}$) and strictly decreasing in
the intervals with even subindices ($I_{\text{even}}$).

For $n\geq 0$, $f^{n}$ denotes the $n$th iterate of $f$, where $f^{0}$ is
the identity map$.$ Since $f$ is continuous and piecewise strictly monotone,
so is $f^{n}$ for all $n\geq 1$. The proof of the following Proposition is direct
(see \cite[Lemma 2.2]{Amigo2014}).

\begin{Proposition}
\label{Propo1} Let $f\in \mathcal{M}_{l}(I)$ with positive shape and $n\geq
1 $. We have:\begin{equation}
f^{n+1}(x)\text{ is a maximum if }\left\{ 
\begin{array}{l}
\text{(i) }f^{n}(x)=c_{\text{odd}}\text{,} \\ 
\text{(ii) }f^{n}(x)\in I_{\text{even}}\text{ and }f^{n}(x)\text{ is a
minimum, or} \\ 
\text{(iii) }f^{n}(x)\in I_{\text{odd}}\text{ and }f^{n}(x)\text{ is a
maximum,}\end{array}\right.  \label{Lemma1A}
\end{equation}and\begin{equation}
f^{n+1}(x)\text{ is a minimum if }\left\{ 
\begin{array}{l}
\text{(i) }f^{n}(x)=c_{\text{even}}\text{,} \\ 
\text{(ii) }f^{n}(x)\in I_{\text{odd}}\text{ and }f^{n}(x)\text{ is a
minimum, or} \\ 
\text{(iii) }f^{n}(x)\in I_{\text{even}}\text{ and }f^{n}(x)\text{ is a
maximum.}\end{array}\right.  \label{Lemma1B}
\end{equation}
\end{Proposition}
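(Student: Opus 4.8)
The plan is to analyse the local behaviour of the composition $f^{n+1}=f\circ f^{n}$ in a small neighbourhood of $x$, using nothing about $f^{n}$ beyond the fact, recorded just above, that it is continuous and piecewise strictly monotone. The key preliminary observation is that at any interior point a continuous piecewise strictly monotone map is of exactly one of four \emph{local types}: strictly increasing through the point, strictly decreasing through the point, a strict local maximum, or a strict local minimum. So I would list the four local types of $f^{n}$ at $x$ and, writing $y:=f^{n}(x)$, the local types of $f$ at $y$, the latter being dictated by the positive-shape hypothesis: $f$ is strictly increasing on the open intervals $I_{\text{odd}}$, strictly decreasing on the open intervals $I_{\text{even}}$, has a strict local maximum at each $c_{\text{odd}}$ and a strict local minimum at each $c_{\text{even}}$. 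Two elementary remarks then reduce everything to a finite table: by continuity of $f^{n}$, if $y$ lies in one of the open monotonicity intervals of $f$ then $f^{n}(z)$ stays in that interval for all $z$ in a small enough neighbourhood of $x$; and since $f^{n}$ is injective on each of its own monotonicity intervals, the only solution of $f^{n}(z)=y$ near $x$ is $z=x$ itself, which will make all the extrema found below strict.

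Then I would run through the table. If $y=c_{\text{odd}}$ then $f$ has a strict local maximum at $y$, whence $f^{n+1}(z)=f(f^{n}(z))<f(y)=f^{n+1}(x)$ for every $z\neq x$ near $x$, irrespective of the local type of $f^{n}$ at $x$; this is clause (i) of \eqref{Lemma1A}, and the mirror computation with $y=c_{\text{even}}$ gives clause (i) of \eqref{Lemma1B}. If instead $f$ is monotone at $y$, the local type of $f^{n+1}$ at $x$ follows from that of $f^{n}$ at $x$ by the chain rule for the sense of monotonicity: when $f$ is increasing at $y$, i.e.\ $y\in I_{\text{odd}}$, the type is preserved, so a local maximum (resp.\ minimum) of $f^{n}$ at $x$ yields a local maximum (resp.\ minimum) of $f^{n+1}$ at $x$, which are clauses (iii) of \eqref{Lemma1A} and \eqref{Lemma1B}; when $f$ is decreasing at $y$, i.e.\ $y\in I_{\text{even}}$, the type is reversed, so a local minimum of $f^{n}$ at $x$ yields a local maximum of $f^{n+1}$ and a local maximum yields a local minimum, which are clauses (ii) of \eqref{Lemma1A} and \eqref{Lemma1B}. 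In the only remaining configurations $f^{n}$ is monotone at $x$ and $f$ is monotone at $y$, so $f^{n+1}$ is monotone at $x$ and none of the six hypotheses holds; as these exhaust the table, \eqref{Lemma1A} and \eqref{Lemma1B} in fact describe all the turning points of $f^{n+1}$.

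Since the argument is essentially bookkeeping, the only genuine care is with the degenerate configurations: one must ensure that the neighbourhoods can be chosen small enough that $f^{n}$ does not leave the relevant monotonicity interval of $f$ — which is exactly where it matters that the sets $I_{\text{odd}}$, $I_{\text{even}}$ occurring in \eqref{Lemma1A} and \eqref{Lemma1B} are the \emph{open} monotonicity intervals, with the turning-point values $y=c_{j}$ handled separately by the clauses (i) — and one must check that the extrema produced are genuine strict local extrema, which is guaranteed by the injectivity noted above. I expect this routine verification of the boundary cases, rather than any individual entry of the composition table, to be the (modest) crux; everything else is immediate from composing one-sided monotonicities. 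As both the statement and this proof are elementary, one may instead simply invoke \cite[Lemma 2.2]{Amigo2014}, where the Proposition is proved along these lines.
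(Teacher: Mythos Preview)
Your proposal is correct and matches the paper's approach: the paper simply notes that the proof is direct and cites \cite[Lemma 2.2]{Amigo2014}, which is precisely the case-by-case composition analysis you have written out in detail.
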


If $f$ has negative shape, then replace \textquotedblleft$f^{n+1}(x)$ is a
maximum if\textquotedblright\ by \textquotedblleft$f^{n+1}(x)$ is a minimum
if\textquotedblright\ in (\ref{Lemma1A}), and the other way around in (\ref{Lemma1B}).

Apply Proposition \ref{Propo1} to $f^{n}$, $f^{n-1}$, ..., $f$ \ to conclude
that $f^{n+1}$ has local extrema at all $x\in I$ such that $f^{k}(x)=c_{i}$
for $k=0,1,..,n$ and some $i$. This proves:

\begin{Proposition}
\label{Propo1B} Let $f\in \mathcal{M}_{l}(I)$ and $n\geq 1$. Then $f^{n}$
has local extrema at the critical points and their preimages up to order $n-1 $.
\end{Proposition}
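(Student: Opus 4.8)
The plan is to argue by induction on $n$, using Proposition \ref{Propo1} as the engine of the inductive step; this is exactly the ``peeling'' of the iterates $f^{n},f^{n-1},\dots ,f$ alluded to above. For the base case $n=1$, the map $f^{1}=f$ has, by the very definition of an $l$-modal map, local extrema precisely at the turning points $c_{1},\dots ,c_{l}$, and these are exactly ``the critical points and their preimages up to order $0$'', so the statement holds.

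For the inductive step, assume $f^{n}$ has a local extremum at every $x$ with $f^{k}(x)=c_{i}$ for some $i$ and some $k\in \{0,1,\dots ,n-1\}$. Write $f^{n+1}=f\circ f^{n}$ and fix any $x_{0}$ with $f^{k}(x_{0})=c_{i}$ for some $i$ and some $k\in \{0,1,\dots ,n\}$. If $k=n$, then $f^{n}(x_{0})=c_{i}$ is a turning point of $f$, and case (i) of Proposition \ref{Propo1} (taken with iterate index $n$) immediately gives that $f^{n+1}$ has a local extremum at $x_{0}$. If instead $k\leq n-1$, the inductive hypothesis tells us that $f^{n}$ already has a local extremum at $x_{0}$; then either $f^{n}(x_{0})$ coincides with some turning point $c_{j}$, whence case (i) of Proposition \ref{Propo1} again applies, or $f^{n}(x_{0})$ lies in the interior of one of the intervals $I_{1},\dots ,I_{l+1}$, whence the appropriate one of cases (ii), (iii) applies (the choice being dictated by whether $x_{0}$ is a local minimum or a local maximum of $f^{n}$). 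In every case Proposition \ref{Propo1} certifies that $f^{n+1}$ has a local extremum at $x_{0}$, which closes the induction; the negative-shape case is identical after exchanging the words ``maximum'' and ``minimum''.

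I do not expect a genuine obstacle here, as the argument is essentially bookkeeping; the one point that deserves care is that, in order to invoke Proposition \ref{Propo1} at each stage, one must keep track not only of the \emph{fact} that $x_{0}$ is an extremum of $f^{n}$ but also of its \emph{type} (maximum or minimum) and of whether its image lands in an $I_{\text{odd}}$ or an $I_{\text{even}}$. This is precisely the data that propagates along the orbit segment $x_{0},f(x_{0}),\dots ,f^{n}(x_{0})$ when Proposition \ref{Propo1} is applied repeatedly, and it is the reason that proposition was stated with the sharp ``$f^{n+1}(x)$ is a maximum / is a minimum'' conclusions rather than with the weaker assertion that $f^{n+1}$ is merely strictly monotone between consecutive preimages of turning points. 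A minor, purely cosmetic caveat is that a preimage of some $c_{i}$ may fall on an endpoint $a$ or $b$, in which case ``local extremum'' is to be read one-sidedly; this changes nothing in the argument.
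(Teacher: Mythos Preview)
Your proof is correct and follows essentially the same approach as the paper: the paper's argument is the one-line remark ``Apply Proposition \ref{Propo1} to $f^{n}$, $f^{n-1}$, \dots, $f$'' preceding the statement, and your induction on $n$ with Proposition \ref{Propo1} driving the step is exactly the formalization of that peeling procedure. Your writeup is simply more explicit about the case split ($k=n$ versus $k\leq n-1$) and about the max/min bookkeeping, but there is no substantive difference in method.
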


For $n\geq 1$, let\begin{equation}
s_{n,i}=\#\{x\in (a,b):f^{n}(x)=c_{i},\,\,\,f^{k}(x)\neq c_{j}\text{ for }0\leq k\leq n-1\text{, }1\leq j\leq l\},  \label{sni}
\end{equation}i.e., the number of \textit{interior} \textit{simple} zeros of the function $f^{n}(x)-c_{i}$, and set 
\begin{equation}
s_{n}=\sum_{i=1}^{l}s_{n,i}  \label{snu}
\end{equation}for the total number of such zeros. For the convenience of notation,
definition (\ref{sni}) can be extended to $n=0$: $s_{0,i}=\#\{x\in
(a,b):x=c_{i}\}=1$, so that $s_{0}=l$.

In the case of differentiable maps (to be considered in Sections \ref{section3}--\ref{section5}), $s_{n,i}$ amounts geometrically to the number of \textit{transverse}
intersections of $y=f^{n}(x)$ with the $i$th critical line $y=c_{i}$.
Indeed, by the chain rule of derivation,\begin{equation}
\frac{df^{n}}{dx}(x)=\prod\limits_{k=0}^{n-1}\frac{df}{dx}(f^{k}(x)).
\label{derivf^n}
\end{equation}Therefore, if $f^{k}(x)\neq c_{j}$ for all $0\leq k\leq n-1$ and $1\leq
j\leq l$, then $df^{n}(c_{i})/dx\neq 0$. A
solution $x^{\ast }$ of $f^{n}(x)-c_{i}=0$ such that $df^{n}(x^{\ast })/dx=0$
corresponds to a tangential intersection of the curve $y=f^{n}(x)$ with the
critical line $y=c_{i}$. Abusing the language, we will speak of transverse
and non-transverse intersections in the general case too. Incidentally, Equation
(\ref{derivf^n}) proves Proposition \ref{Propo1B} for differentiable maps.

Next, let $e_{n}$ be the \textit{number of local extrema }of\textit{\ }$f^{n} $.

\begin{Proposition}
\label{Propo2} Let $f\in \mathcal{M}_{l}(I)$ and $n\geq 0$. Then,\begin{equation}
e_{n+1}=e_{n}+s_{n}.  \label{1.0}
\end{equation}
\end{Proposition}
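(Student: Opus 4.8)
The plan is to prove (\ref{1.0}) by induction on $n\geq 0$, together with the companion statement that the interior local extrema of $f^{n}$ are \emph{exactly} the points $x\in(a,b)$ with $f^{k}(x)=c_{i}$ for some $0\leq k\leq n-1$ and some $1\leq i\leq l$; write $E_{n}$ for this set, so the companion statement reads $e_{n}=|E_{n}|$. For $n=0$ the range of $k$ is empty, $E_{0}=\emptyset$, and the statement just says that $f^{0}=\mathrm{id}$ has no interior local extremum, which is the base case. I will arrange the induction so that the step from $n$ to $n+1$ yields at once the companion statement for $n+1$ and the instance $e_{n+1}=e_{n}+s_{n}$ of (\ref{1.0}); running it for $n=0,1,2,\dots$ then covers (\ref{1.0}) in full.

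For the inductive step, assume the companion statement for $n$ and consider $f^{n+1}=f\circ f^{n}$. The inclusion $E_{n+1}\subseteq\{\text{interior local extrema of }f^{n+1}\}$ is precisely Proposition \ref{Propo1B} applied with exponent $n+1$ (equivalently, it follows by iterating Proposition \ref{Propo1}). For the reverse inclusion, let $x_{0}\in(a,b)$ with $x_{0}\notin E_{n+1}$; I claim $f^{n+1}$ has no local extremum at $x_{0}$. Indeed $x_{0}\notin E_{n}$, so by the inductive hypothesis $x_{0}$ is not a turning point of $f^{n}$ and $f^{n}$ is strictly monotone on a neighbourhood of $x_{0}$; moreover $f^{n}(x_{0})\neq c_{i}$ for every $i$, so $f^{n}(x_{0})$ lies in the interior of one of the $l+1$ intervals on which $f$ is strictly monotone. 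Hence $f^{n+1}=f\circ f^{n}$ is strictly monotone near $x_{0}$, so $x_{0}$ is not a local extremum. This proves that the interior local extrema of $f^{n+1}$ are exactly $E_{n+1}$, i.e., the companion statement at level $n+1$.

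Finally I count $|E_{n+1}|$. Since $E_{n}\subseteq E_{n+1}$, we have $E_{n+1}=E_{n}\sqcup(E_{n+1}\setminus E_{n})$, and $|E_{n}|=e_{n}$ by the inductive hypothesis. The set $E_{n+1}\setminus E_{n}$ consists of the $x\in(a,b)$ with $f^{n}(x)=c_{i}$ for some $i$ but $f^{k}(x)\neq c_{j}$ for all $0\leq k\leq n-1$ and all $j$; grouping these points by the value of $i$ and comparing with (\ref{sni}) gives $|E_{n+1}\setminus E_{n}|=\sum_{i=1}^{l}s_{n,i}$, which equals $s_{n}$ by (\ref{snu}). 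Therefore $e_{n+1}=|E_{n+1}|=e_{n}+s_{n}$, completing the induction.

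I do not expect a genuine obstacle here; the two points that need care are the reverse inclusion in the inductive step — checking that composing with $f$ introduces no local extremum other than at the new critical preimages — and the bookkeeping that matches $E_{n+1}\setminus E_{n}$ with the count in (\ref{sni})--(\ref{snu}), noting in particular that each such new preimage $x$ is automatically a \emph{simple} (sign-changing) zero of $f^{n}(x)-c_{i}$ since $f^{n}$ is strictly monotone near it. One can also bypass the explicit induction by invoking Proposition \ref{Propo1B} for the local extrema of both $f^{n}$ and $f^{n+1}$ and then performing the same splitting of the preimage set.
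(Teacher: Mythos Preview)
Your proof is correct and follows essentially the same approach as the paper: both rest on partitioning the interior local extrema of $f^{n+1}$ into those already present for $f^{n}$ (your $E_{n}$; the paper's cases (a2) and (b)) and the genuinely new ones coming from $f^{n}(x)=c_{i}$ with no earlier hit (your $E_{n+1}\setminus E_{n}$; the paper's case (a1)), which is precisely $s_{n}$. The only organizational difference is that you carry the characterization of the extremal set as an explicit inductive companion statement, whereas the paper appeals directly to Proposition~\ref{Propo1} for the necessary-and-sufficient case split; you yourself note this shortcut in your final remark.
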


\begin{proof}
If $n=0$, then $e_{0}=0$ and $s_{0}=l$, so that $e_{0}+s_{0}$ gives the
right answer $e_{1}=l$.

Suppose now that $n\geq 1$ and $f^{n+1}$ has a local extremum at $x_{0}\in I$, so that $e_{n+1}$ is the number of such $x_{0}$'s. According to
Proposition \ref{Propo1}, there are two exclusive possibilities:

(a) $f^{n}(x_{0})=c_{i}$ for some $1\leq i\leq l$ (Proposition \ref{Propo1}(i)); or

(b) $f^{n}(x_{0})\neq c_{i}$ for all $1\leq i\leq l$ and $f^{n}$ has a local
extremum at $x_{0}$ (Proposition \ref{Propo1}(ii) and (iii)).

In turn, (a) subdivides according to whether $x_{0}$ is a transverse or a
tangential intersection of $y=f^{n}(x)$ with the critical line $y=c_{i}$:

(a1) $f^{n}(x_{0})=c_{i}$ and $f^{k}(x_{0})\neq c_{j,}$ for all $0\leq k\leq
n-1$, $1\leq j\leq l$.

(a2) $f^{n}(x_{0})=c_{i}$ and $f^{k}(x_{0})=c_{j}$ for \ some $k$ and $j$, $0\leq k\leq n-1$, $1\leq j\leq l$.

Therefore, each $x_{0}\in I$ that contributes to $e_{n+1}$ contributes to $s_{n}$ (if case (a1) holds) or, otherwise, to $e_{n}$ (if case (a2) or (b)
holds). The bottom line is Equation (\ref{1.0}).
\end{proof}

Figure \ref{figure1} illustrates Equation (\ref{1.0}) for the bimodal map\begin{equation}
f(x)=9.375x^{3}-15.4688x^{2}+6.75x+0.1,  \label{bimodal}
\end{equation}$I=[0,1]$, whose critical points are $c_{1}=0.3$ ($f(c_{1})=0.985938$) and $c_{2}=0.8$ ($f(c_{2})=0.4$).

In the next two sections we discuss how the transverse and tangential
intersections of $f^{n}$ with the critical lines are related to two salient
aspects of the dynamics generated by $f$: topological entropy and
superstable periodic orbits.

\begin{figure}[]
\centering
\includegraphics[height=3cm]{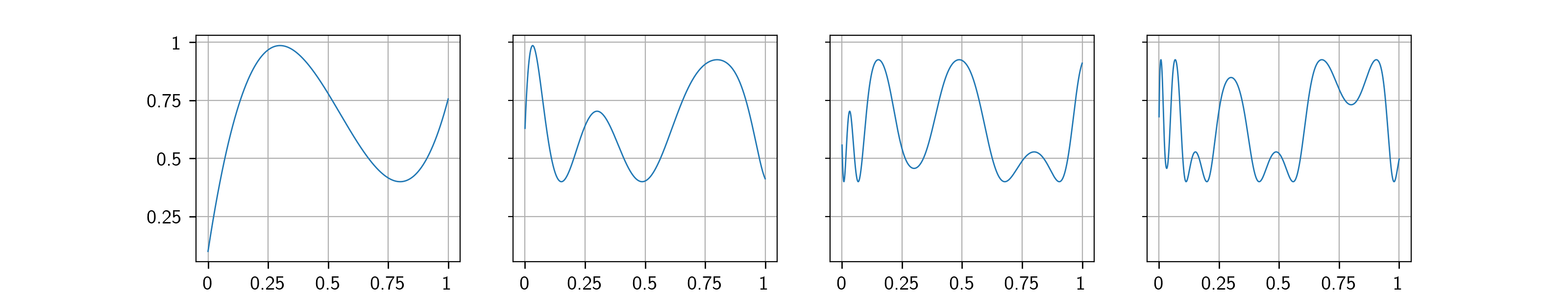}
\caption{Graphs of $f$, $f^{2}$, $f^{3}$, and $f^{4}$ for the bimodal map (\protect\ref{bimodal}). Reproduced from \protect\cite{Amigo2012}.}
\label{figure1}
\end{figure}

\subsection{Topological entropy}\label{section22}

The connection of the recursive formula (\ref{1.0}) with the topological
entropy of $f\in \mathcal{M}_{l}(I)$, $h(f)$, is readily established through
the \textit{lap number} $\ell _{n}$ of $f^{n}$, which is defined as the
number of maximal monotonicity intervals of $f^{n}$. First, replace $\ell
_{n}=e_{n}+1$ in (\ref{1.0}) to obtain 
\begin{equation}
\ell _{n+1}=\ell _{n}+s_{n}.  \label{account5}
\end{equation}The initial values $\ell _{0}=1$ and $s_{0}=l$ yield $\ell _{1}=l+1$, as it
should.

Second, use the relation \cite{Misiu} 
\begin{equation}
h(f)=\lim_{n\rightarrow \infty }\frac{1}{n}\log \ell _{n}.  \label{1.1}
\end{equation}Since $\ell _{n}\leq (l+1)^{n}$ (see e.g. \cite{Amigo2015}),\begin{equation}
h(f)\leq \log (l+1).  \label{maxh(f)}
\end{equation}

Finally, Equations (\ref{account5}) and (\ref{1.1}) lead then to the
expression\begin{equation}
h(f)=\lim_{n\rightarrow \infty }\frac{1}{n}\log \left(
1+\sum\limits_{k=0}^{n-1}s_{k}\right) ,  \label{account6}
\end{equation}which was first derived in \cite{Amigo2014}. For the general concept of
entropy, see \cite{Katok2007,Amigo2015B,Amigo2018}.

As a technical remark, the topological entropy of a continuous map (in
particular, a multimodal map $f:I\rightarrow I$) only depends on its
non-wandering set \cite{Alseda}. A point $x\in I$ is said to be \textit{non-wandering} for $f$ if for any neighborhood $U$ of $x$, there is an
integer $n\geq 1$ such that $f^{n}(U)\cap U\neq \emptyset $; otherwise, $x$
is said to be a wandering point for $f$. The \textit{non-wandering set} for $f$ consists of all the points that are non-wandering for $f$.

Equations (\ref{1.1}) and (\ref{account6}) add to other similar expressions
of $h(f)$ in terms of $e_{n}=\ell _{n}-1$, the number of $n$-periodic
points, the variation of $f^{n}$ \cite[Theorem 1.1]{Bruin2013}, etc. In this
regard, the quantities $s_{k}$ in Equation (\ref{account6}) can be viewed in
the following three different ways:

(\textbf{1}) Algebraically, $s_{k}$ is by definition (\ref{sni})-(\ref{snu})
the number of interior simple zeros of the equations $f^{k}(x)-c_{i}=0$, $i=1,2,...,l$.

(\textbf{2}) Geometrically, $s_{k}$ is the total number of transverse
intersections of the iterated map $f^{k}$ with the critical lines.

(\textbf{3}) Dynamically, $s_{k}$ is the total number of preimages of the
critical points of minimal order $k$.

Whatever the interpretation, we are going to show that $s_{k}$ is a useful
tool to study multimodal maps.

Several numerical algorithms for the topological entropy of multimodal maps
based on Equation (\ref{account6}) can be found in \cite{Amigo2012,Amigo2014,Amigo2015}, the algorithm in \cite{Amigo2015} being a
variant of the algorithm in \cite{Amigo2014} and this, in turn, a
simplification of the algorithm in \cite{Amigo2012}. The performance of the
algorithm \cite{Amigo2014} has recently been benchmarked in 
\cite{Cockram2020} with favorable results. The computation of $s_{n}$ from the values of $s_{0},...,s_{n-1}$ is possible via the so-called
min-max sequences \cite{Dias1}, which are closely related to the kneading
sequences \cite{Milnor1988,Melo1993}. As compared to the kneading symbols,
the min-max symbols contain additional information on the minimum/maximum
character of the critical values $f^{n}(c_{i})$, $1\leq i\leq l$, with
virtually no extra computational penalty \cite{Amigo2012,Amigo2014}. The
geometrical properties of the min-max symbols were studied in \cite{Dilao2012} and \cite{Amigo2012} for twice-differentiable uni- and
multimodal maps, respectively, and in \cite{Amigo2014,Amigo2015} for just
continuous multimodal maps. A brief overview is given in the Introduction of 
\cite{Amigo2015}.

Let $f_{t}\in \mathcal{M}_{l}(I_{t})$ be a one-parametric family of $l$-modal maps whose parameter $t$ ranges in an interval $J\subset \mathbb{R}$.
Denote by $s_{n}(t)$ the total number of transverse intersections of $y=f_{t}^{n}(t)$ with the critical lines. From (\ref{account6}) and the
monotonicity of the logarithmic function it follows:

\begin{Proposition}
\label{Propo3} Let $f_{t}\in \mathcal{M}_{l}(I_{t})$, and $t_{1},t_{2}\in J$
with $t_{1}<t_{2}$. Suppose $s_{n}(t_{1})\leq s_{n}(t_{2})$ for all $n\geq
n_{0}$. Then $h(f_{t_{1}})\leq h(f_{t_{2}})$.
\end{Proposition}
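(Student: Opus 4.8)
The plan is to read the inequality off directly from the entropy formula (\ref{account6}); the only wrinkle is that the hypothesis $s_{n}(t_{1})\leq s_{n}(t_{2})$ is assumed only from the index $n_{0}$ on, not for every $n$, so a term-by-term comparison of the partial sums is not immediately available. I would write $S_{n}(t)=1+\sum_{k=0}^{n-1}s_{k}(t)$, so that $h(f_{t})=\lim_{n\to\infty}\frac{1}{n}\log S_{n}(t)$ by (\ref{account6}), and note that $S_{n}(t)\geq 1$ for all $n$ because every $s_{k}(t)\geq 0$ by definition (\ref{sni}).

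First I would split the sum defining $S_{n}(t_{1})$ at the index $n_{0}$ and apply the hypothesis to the tail: for $n>n_{0}$,
\[
S_{n}(t_{1})=1+\sum_{k=0}^{n_{0}-1}s_{k}(t_{1})+\sum_{k=n_{0}}^{n-1}s_{k}(t_{1})\leq 1+\sum_{k=0}^{n_{0}-1}s_{k}(t_{1})+\sum_{k=n_{0}}^{n-1}s_{k}(t_{2}).
\]
Adding and subtracting $\sum_{k=0}^{n_{0}-1}s_{k}(t_{2})$ and recognizing $S_{n}(t_{2})$ on the right-hand side, this becomes $S_{n}(t_{1})\leq S_{n}(t_{2})+C$, where $C:=\sum_{k=0}^{n_{0}-1}\bigl(s_{k}(t_{1})-s_{k}(t_{2})\bigr)$ is a constant not depending on $n$. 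Since $S_{n}(t_{2})\geq 1$, the additive constant can be absorbed multiplicatively, $S_{n}(t_{1})\leq\bigl(1+\max\{C,0\}\bigr)S_{n}(t_{2})=:C'S_{n}(t_{2})$ with $C'\geq 1$, and then taking logarithms, dividing by $n$, and letting $n\to\infty$ yields
\[
h(f_{t_{1}})=\lim_{n\to\infty}\frac{1}{n}\log S_{n}(t_{1})\leq\lim_{n\to\infty}\Bigl(\frac{\log C'}{n}+\frac{1}{n}\log S_{n}(t_{2})\Bigr)=h(f_{t_{2}}),
\]
which is the claim.

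There is essentially no hard part: the result is an almost immediate corollary of (\ref{account6}) together with the monotonicity of the logarithm. The only point that requires (minimal) attention is exactly why the statement tolerates the restriction ``for all $n\geq n_{0}$'' rather than demanding it for all $n$: the first $n_{0}$ terms of the two sums are not controlled by the hypothesis, so $S_{n}(t_{1})\leq S_{n}(t_{2})$ need not hold for every $n$. The fix is the routine observation that their discrepancy is a single fixed constant, and that a fixed constant --- additive or, after the step above, multiplicative --- is washed out by the normalized limit $\frac{1}{n}\log(\cdot)$.
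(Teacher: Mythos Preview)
Your proof is correct and follows exactly the route the paper indicates: the paper states that Proposition~\ref{Propo3} follows directly from (\ref{account6}) and the monotonicity of the logarithm, without writing out any details. You have simply made explicit the one point the paper leaves implicit, namely that the finitely many uncontrolled terms $s_{0}(t),\ldots,s_{n_{0}-1}(t)$ contribute only a bounded additive (hence eventually negligible multiplicative) discrepancy to the partial sums.
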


As we will see in Section \ref{section4}, Proposition \ref{Propo3} provides a handle to
prove the monotonicity of the topological entropy for the family of
quadratic maps. We mentioned already in the Introduction that, according to 
\cite{Bruin2013,Bruin2015}, the existing monotonicity proofs \cite{Douady1984,Milnor1988,Douady1995,Tsujii2000}\textbf{\ }rely in one way or
another on complex analysis. Unlike them, our approach uses real analysis.
Let us remind at this point that the topological entropy of a family of
unimodal maps labeled by some natural parameter (such as its critical value)
is not usually monotone, even under very favorable assumptions \cite{Bruin1995}. More generally, let $f_{v}$ be a polynomial map parametrized by
its critical \textit{values} $v=(v_{1},...,v_{l})$. Then, according to \cite[Theorem 1.1]{Bruin2013}, for $l\geq 2$ there exist fixed values of $v_{2},...,v_{l}$ such that the map $v_{1}\mapsto h(f_{v})$ is not monotone.
For multimodal maps, monotonicity of the map is replaced by the connectivity
of the isentropes \cite[Theorem 1.2]{Bruin2015}. See also \cite{Bruin2015}
for related results and open conjectures.

\subsection{Superstable periodic orbits}\label{section23}

Let $x_{0}\in I$ and set $x_{k}=f^{k}(x_{0})=f(x_{k-1})$ for $k\geq 1$.
Suppose for the time being that $f$ is differentiable and a critical point $c_{i}$ is periodic with prime period $p$. Then, each point of the orbit $\mathcal{O}(c_{i})=\{c_{i}\equiv x_{0},x_{1},...,x_{p-1}\}$ is a fixed point
of $f^{p}$: $f^{p}(x_{j})=x_{j+p}=x_{j}$ for $0\leq j\leq p-1$. $\mathcal{O}(c_{i})$ is said to be \textit{superstable} because (see Equation (\ref{derivf^n})) 
\begin{equation}
\frac{df^{p}}{dx}(x_{j})=\frac{df}{dx}(x_{0})\frac{df}{dx}(x_{1})...\frac{df}{dx}(x_{p-1})=0\;\;\text{for\ }j=0,1,...,p-1,  \label{super}
\end{equation}since $df(x_{0})/dx\equiv df(c_{i})/dx=0$. In other words, $df^{p}(x_{j})/dx$
(whose absolute value quantifies the stability of the fixed points $c_{i}$, $x_{1}$,..., $x_{p-1}$ of $f^{p}$ ) vanishes at each point of the periodic
orbit.

On the other hand, $f^{n}$ has local extrema at all critical points for $n\geq 1$, so that the periodicity condition $f^{p}(c_{i})=c_{i}$ amounts to
a tangential intersection of the curve $y=f^{p}(x)$ and the critical line $y=c_{i}$ at $x=c_{i}$. Therefore, while the transverse intersections of $f^{n}$ with the critical lines are the only input needed to calculate the
topological entropy of multimodal maps, the tangential intersections, if
any, are the main ingredient of the periodic orbits (\textit{cycles}) of a
critical point. All in all, the intersections of $f^{n}$ with the critical
lines, whether transverse or tangential, give information about the
dynamical complexity and superstability of the orbits.

\section{Application case: Quadratic maps}\label{section3}

Quadratic maps have been the workhorse of chaotic dynamics for two good
reasons: their dynamic exhibits a mind-boggling complexity despite being
algebraically so simple and, precisely because of this simplicity, many of
their dynamical properties are amenable to analytical scrutiny. We consider
henceforth the family of the real quadratic maps\begin{equation}
q_{t}(x)=t-x^{2},  \label{0.0}
\end{equation}where $x\in \mathbb{R}$ and $0\leq t\leq 2$. The critical point and the
critical value of $q_{t}$ are $c=0$ and\ $q_{t}(0)=t$, respectively, so the
critical line $y=0$ is the $x$-axis in the Cartesian plane $\{(x,y)\in 
\mathbb{R}^{2}\}$. The quadratic family has two fixed points,\begin{equation}
x_{fix,1}(t)=-\frac{1}{2}\left( 1+\sqrt{1+4t}\right) \leq
-1,\;\;x_{fix,2}(t)=\frac{1}{2}\left( -1+\sqrt{1+4t}\right) \geq 0.
\label{fix}
\end{equation}Therefore, an invariant finite interval $I_{t}$, i.e., $q_{t}(I_{t})\subset
I_{t}$, where defining a dynamic generated by $q_{t}$, is\begin{equation}
I_{t}=[x_{fix,1}(t),-x_{fix,1}(t)]=\left[ -\tfrac{1}{2}(1+\sqrt{1+4t}),\tfrac{1}{2}(1+\sqrt{1+4t})\right] .  \label{I_t}
\end{equation}It holds $I_{0}=[-1,1]\subset I_{t}\subset \lbrack -2,2]=I_{2}$. Moreover, 
\begin{equation}
q_{t}(-x_{fix,1}(t))=q_{t}(x_{fix,1}(t))=x_{fix,1}(t),  \label{boundary}
\end{equation}so that the boundary of $I_{t}$, $\partial
I_{t}=\{x_{fix,1}(t),-x_{fix,1}(t)\}$, is also invariant: $q_{t}(\partial
I_{t})=\{x_{fix,1}(t)\}\subset \partial I_{t}$. Since all $x\notin I_{t}$
escape to $-\infty $ under iterations of $q_{t}$, the set $I_{t}$ contains
the non-wandering set of $q_{t}$.

See Figure \ref{figure2} for some instances of the quadratic family. The
bifurcation diagram of $q_{t}(x)$ in Figure \ref{figure3} shows that the
asymptotic dynamics of the quadratic family (chaotic attractors, along with
stable fixed points and periodic orbits) lives in the interval $-2\leq x\leq
2${.} After the period-doubling cascade, {chaos onset occurs at the \textit{Feigenbaum point} $t_{F}=1.401155...$, i.e., the topological entropy of }$q_{t}$ is positive for $t>t_{F}$.

\begin{figure}[tbp]
\centering
\includegraphics[height=3cm]{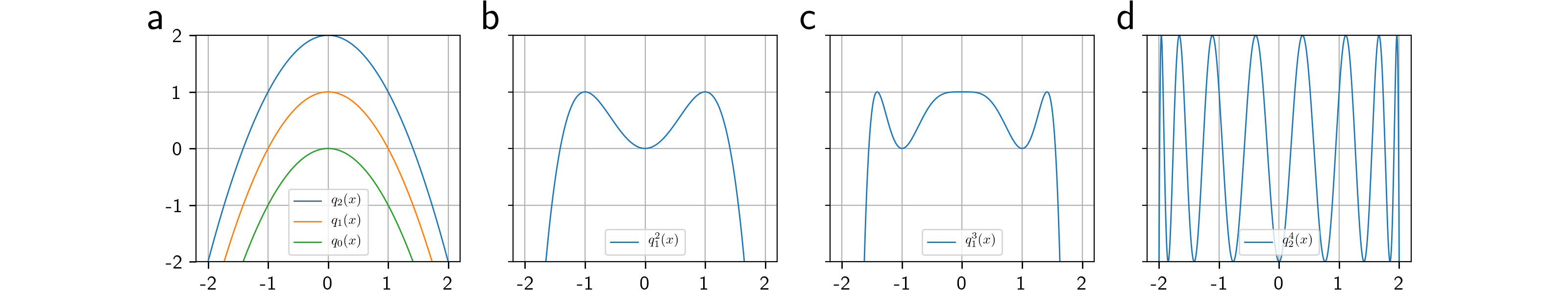}
\caption{(a) Bottom to top: $q_{0}(x)$, $q_{1}(x)$, $q_{2}(x)$. (b) Graph of 
$q_{1}^{2}(x)$. (c) Graph of $q_{1}^{3}(x)$. (d) Graph of $q_{2}^{4}(x)$.}
\label{figure2}
\end{figure}

\begin{figure}[tbp]
\centering
\includegraphics[height=4cm]{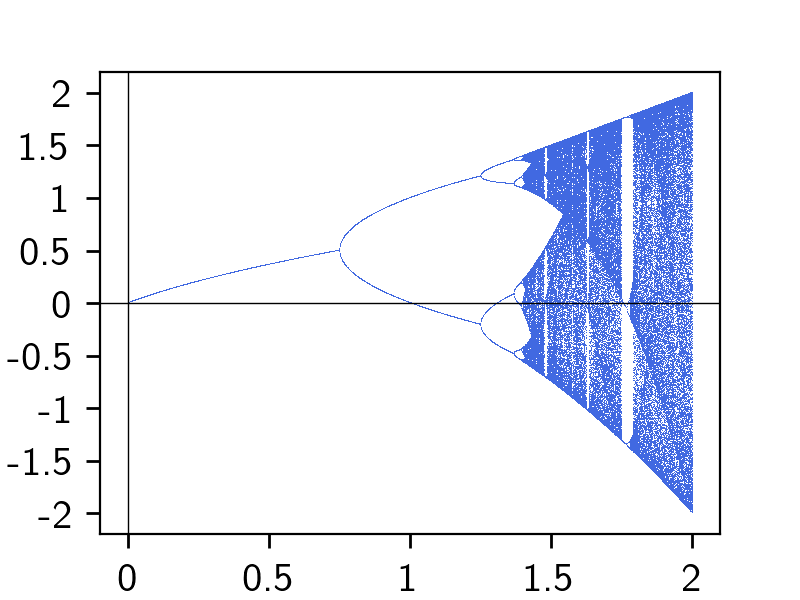}
\caption{Bifurcation diagram of $q_{t}(x)$, $0\leq t\leq 2$.}
\label{figure3}
\end{figure}

The dynamical systems generated by $q_{t}(x)$, where $x\in I_{t}$ and $0\leq
t\leq 2$, and the more popular logistic maps $f_{\mu }(z)=4\mu z(1-z)$,
where $0\leq z\leq 1$ and $\frac{1}{2}\leq \mu \leq 1$, are conjugate to
each other via the affine transformation $\varphi :[0,1]\rightarrow \lbrack
-2\mu ,2\mu ]$ defined as\begin{equation}
x=\varphi (z)=4\mu z-2\mu \text{\ \ and }t=2\mu (2\mu -1)  \label{phi(z)}
\end{equation}or 
\begin{equation}
z=\varphi ^{-1}(x)=\frac{x}{4\mu }+\frac{1}{2}\text{\ \ with }\mu =\frac{1}{4}(1+\sqrt{1+4t}).  \label{invphi(x)}
\end{equation}Thus, $\left. q_{0}\right\vert _{[-1,1]}$ is conjugate to $\left.
f_{0.5}\right\vert _{[0,1]}$, and $\left. q_{2}\right\vert _{[-2,2]}$ to $\left. f_{1}\right\vert _{[0,1]}$. Note that $-2\mu =x_{fix,1}$, so $I_{t}=[-2\mu ,2\mu ]$.

An advantage of the quadratic map (\ref{0.0}) is that the transverse (resp.
tangential) intersections of $y=q_{t}^{n}(x)$ with the critical line
correspond to the simple (resp. multiple) roots of $q_{t}^{n}(x)$, a
polynomial of degree $2^{n}$. Since $q_{t}(x)$ is unimodal ($l=1$), Equation
(\ref{snu}) simplifies to 
\begin{equation}
s_{n}(t)=\#\{x\in \mathring{I}_{t}:q_{t}^{n}(x)=0,\,\,\,q_{t}^{k}(x)\neq 0\text{ for }0\leq k\leq n-1\},  \label{s^n(t)}
\end{equation}where $\mathring{I}_{t}=I_{t}\backslash \partial I_{t}$ is the interior of $I_{t}$. Therefore, $s_{n}(t)$ stands for the number of \textit{simple} zeros
of $q_{t}^{n}(x)$ in $\mathring{I}_{t}$ or, equivalently, for the number of 
\textit{transverse} intersections of the curve $y=q_{t}^{n}(x)$ with the
critical line $y=0$. We show in Remark \ref{Remark2} below that $\mathring{I}_{t}$ contains all zeros of $q_{t}^{n}(x)$, therefore $\mathring{I}_{t}$ can
be safely replaced by $\mathring{I}_{2}=(-2,2)$ (or $\mathbb{R}$, for that
matter) in Equation (\ref{s^n(t)}).

\subsection{Root branches}\label{section31}

We set out to study the real solutions of the equation $q_{t}^{n}(x)\equiv
t-(q_{t}^{n-1}(x))^{2}=0$, $n\geq 1$, which is a polynomial equation of
degree $2^{n}$ in $x$. If $\bar{x}$ is a solution, then $-\bar{x}$ is also a
solution since $q_{t}^{n}(-x)=q_{t}^{n}(x)$.

The following two cases are trivial:\ (i) for $t=0$, $q_{0}^{n}(x)\equiv
-x^{2^{n}}=0$ has the $2^{n}$-fold solution $x=0$; (ii) for $t=2$, $q_{2}^{n}(x)=0$ has $2^{n}$ simple solutions in $(-2,2)$, namely,\begin{equation}
\bar{x}_{\sigma _{1},...,\sigma _{n}}=\sigma _{1}\sqrt{2+\sigma _{2}\sqrt{2+...+\sigma _{n}\sqrt{2}}},  \label{x^bar}
\end{equation}where $\sigma _{1},...,\sigma _{n}\in \{+,-\}$. Alternatively, the roots $\bar{x}_{\sigma _{1},...,\sigma _{n}}$ have the following trigonometric
closed-form \cite[Problem 183]{Polya1970}:\begin{equation}
\bar{x}_{\sigma _{1},...,\sigma _{n}}=2\sin \left( \frac{\pi }{4}\sum_{k=1}^{n}\frac{\sigma _{1}\sigma _{2}\cdots \sigma _{k}}{2^{k-1}}\right) .  \label{x_sigma(2)}
\end{equation}

In the general case, consider the map $F_{n}:\mathbb{R}\times \lbrack
0,2]\rightarrow \mathbb{R}$ defined as $F_{n}(x,t)=$ $q_{t}^{n}(x)$ and the
point $(\bar{x},\bar{t})=(\bar{x}_{\sigma _{1},...,\sigma _{n}},2)$, so that 
$F_{n}(\bar{x},\bar{t})=$ $q_{2}^{n}(\bar{x}_{\sigma _{1},...,\sigma
_{n}})=0 $ and\begin{equation}
\frac{\partial F_{n}}{\partial x}(\bar{x},\bar{t})=\frac{dq_{\bar{t}}^{n}}{dx}(\bar{x})=\prod\limits_{k=0}^{n-1}\frac{dq_{2}}{dx}(q_{2}^{k}(\bar{x}_{\sigma _{1},...,\sigma _{n}}))\neq 0  \label{IFThm}
\end{equation}since $q_{2}^{0}(\bar{x}_{\sigma _{1},...,\sigma _{n}})=\bar{x}_{\sigma
_{1},...,\sigma _{n}}\neq 0$ and $q_{2}^{k}(\bar{x}_{\sigma _{1},...,\sigma
_{n}})=-\bar{x}_{\sigma _{k+1},...,\sigma _{n}}\neq 0$ for $k=1,...,n-1$. By
the \textit{Implicit Function Theorem}, there exists a neighborhood $U\subset \lbrack 0,2]$ of $\bar{t}=2$ and a unique smooth function $\phi
_{\sigma _{1},...,\sigma _{n}}:U\rightarrow \mathbb{R}$ such that $\phi
_{\sigma _{1},...,\sigma _{n}}(2)=\bar{x}_{\sigma _{1},...,\sigma _{n}}$ and 
$q_{t}^{n}(\phi _{\sigma _{1},...,\sigma _{n}}(t))\equiv t-q_{t}^{n-1}(\phi
_{\sigma _{2},...,\sigma _{n}}(t))^{2}=0$, i.e., 
\begin{equation}
\phi _{\sigma _{1},...,\sigma _{n}}(t)=\sigma _{1}\sqrt{t+\phi _{\sigma
_{2},...,\sigma _{n}}(t)}=...=\sigma _{1}\sqrt{t+\sigma _{2}\sqrt{t+...+\sigma _{n}\sqrt{t}}},  \label{phi=x}
\end{equation}for all $t\in U$. Therefore, in this case the \textquotedblleft
implicit\textquotedblright\ functions $\phi _{\sigma _{1},...,\sigma
_{n}}(t) $ are explicitly known, and\begin{equation}
-2<\phi _{\sigma _{1},...,\sigma _{n}}(t)<2  \label{phi<2}
\end{equation}for all $n\geq 1$, $\sigma _{1},...,\sigma _{n}\in \{+,-\}$, and $0\leq
t\leq 2$.

The functions $\phi _{\sigma _{1},...,\sigma _{n}}(t)$ will be generically
called \textit{root branches} of $q_{t}^{n}(x)$; notice that the sign of $\phi _{\sigma _{1},...,\sigma _{n}}(t)$ depends on $\sigma _{1}$, hence $\phi _{-\sigma _{1},\sigma _{2},...,\sigma _{n}}(t)=-\phi _{\sigma
_{1},\sigma _{2},...,\sigma _{n}}(t)$. When the components are not
important, we shorten the notation and write $(\sigma _{1},...,\sigma
_{n})=\sigma $. We call $n$, the number of components of $\sigma $, the 
\textit{rank} of the \textit{signature} $\sigma $ and denote it by $\left\vert \sigma \right\vert $. Likewise, we call $\left\vert \sigma
\right\vert $ the rank of $\phi _{\sigma }(t)$, so $q_{t}^{\left\vert \sigma
\right\vert }(\phi _{\sigma }(t))=0$. Sometimes we write $\pm $ in a
component of a signature to refer to both branches. If $\sigma $ is a final
segment of the signature $\rho $, we say that $\phi _{\rho }(t)$ is a 
\textit{successor} of $\phi _{\sigma }(t)$; likewise if $\sigma $ is an
initial segment of the signature $\rho $, we say the $\phi _{\rho }(t)$ is a 
\textit{predecessor} of $\phi _{\sigma }(t)$.

Let us pause at this point to address a few basic properties of the root
branches. We denote by $\mathbf{dom\,}\phi _{\sigma }$ the \textit{definition domain} of $\phi _{\sigma }(t)$, that is, the points in the
parametric interval $[0,2]$ where the right hand side of Equation (\ref{phi=x}) exists. In view of (\ref{phi=x}), the definition domains of $\phi
_{\pm ,\sigma }(t)$, the two immediate successors\ of $\phi _{\sigma }(t)$,
are given by 
\begin{equation}
\mathbf{dom\,}\phi _{+,\sigma }=\mathbf{dom\,}\phi _{-,\sigma }=\{0\leq
t\leq 2:t+\phi _{\sigma }(t)\geq 0\}.  \label{root8}
\end{equation}Since $\phi _{\sigma }(0)=0$ for all signatures $\sigma $ and $\phi _{\sigma
}(2)=\bar{x}_{\sigma _{1},...,\sigma _{n}}\in (-2,2)$, it holds $\{0,2\}\subset \mathbf{dom\,}\phi _{\pm ,\sigma }$ for all root branches. It
is also obvious that\begin{equation}
\mathbf{dom\,}\phi _{+,\sigma }=\mathbf{dom\,}\phi _{-,\sigma }\subset 
\mathbf{dom\,}\phi _{\sigma },  \label{root9}
\end{equation}so that the consecutive successors of $\phi _{\sigma }(t)$ have, in general,
ever smaller definition domains. The only exceptions are 
\begin{equation}
\mathbf{dom\,}\phi _{+,+,...,+}=\mathbf{dom\,}\phi _{-,+,...,+}=[0,2].
\label{full dom}
\end{equation}Examples of definition domains are the following:
\begin{align}
\mathbf{dom\,}\phi _{+,-}& =\{0\}\cup \lbrack 1,2],  \label{Exam_dom} \\
\mathbf{dom\,}\phi _{+,-,+}& =\{0\}\cup \lbrack 1.7549...,2],  \notag \\
\mathbf{dom\,}\phi _{+,-,+,-}& =\{0\}\cup \{1\}\cup \lbrack 1.3107...,2]. 
\notag
\end{align}

Figure \ref{figure4} shows the graphs of the root branches of ranks 1 to 5.
In panel (a), the 2-fold zero $\phi _{\pm }(0)=0$ correspond to the
tangential intersection of $q_{0}(x)$ with the $x$-axis in Figure \ref{figure2}(a), while the 2-fold zero $\phi _{\pm -}(1)=0$ and the two simple
zeros $\phi _{++}(1)=\sqrt{2}$ and $\phi _{-+}(1)=-\sqrt{2}$ correspond to
the tangential intersection and the two transverse intersections,
respectively, of $q_{1}^{2}(x)$ with the $x$-axis in Figure \ref{figure2}(b). In panel (b), the two simple roots $\phi _{+++}(1)=1.5538$ and $\phi
_{-++}(1)=-1.5538$ correspond to the transverse intersections of $q_{1}^{3}(x)$ with the $x$-axis in Figure \ref{figure2}(c), while the two
2-fold roots $\phi _{+\pm -}(1)=1$ and $\phi _{-\pm -}(1)=-1$ correspond to
the two tangential intersections of $q_{1}^{3}(x)$. The 16 roots $\phi
_{\sigma _{1},\sigma _{2},\sigma _{3},\sigma _{4}}(2)$ in panel (c)
correspond to the 16 transverse intersections of $q_{2}^{4}(x)$ with the $x$-axis in Figure \ref{figure2}(d). Finally, panel (d) shows together the 62
root branches of ranks 1 to 5.

\begin{figure}[tbp]
\centering
\includegraphics[height=9cm]{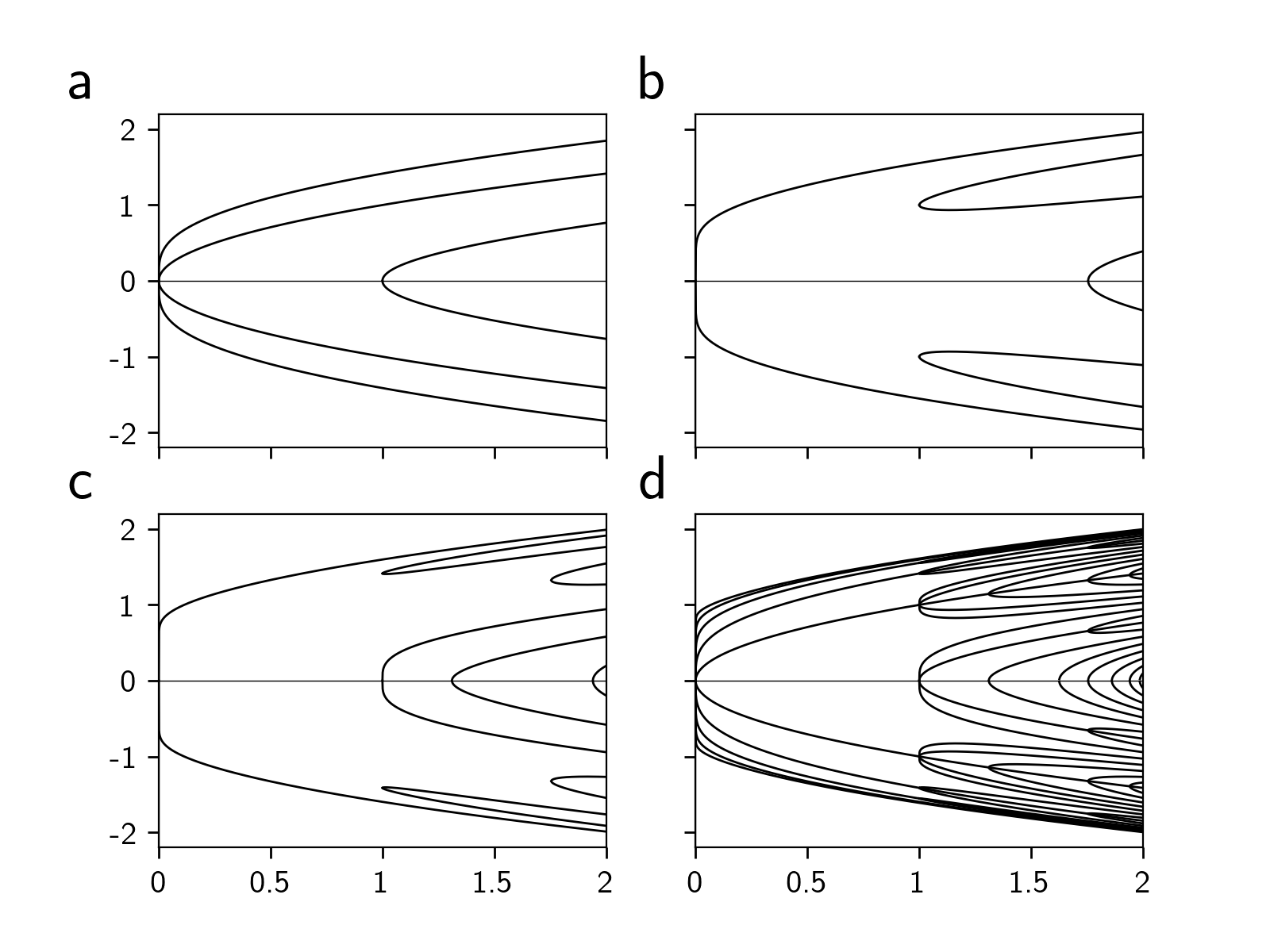}
\caption{Root branches of ranks 1 to 5; since $\protect\phi _{-,\protect\sigma }(t)=-\protect\phi _{+,\protect\sigma }(t)$, only positive branches
(upper half-plane) are specified next (top to bottom). (a) Ranks 1 and 2: $\protect\phi _{++}$, $\protect\phi _{+}$, and $\protect\phi _{+-}$. (b) Rank
3: $\protect\phi _{+++}$, $\protect\phi _{+\pm -}$, and $\protect\phi _{+-+}$. (c) Rank 4: $\protect\phi _{++++}$, $\protect\phi _{++\pm -}$, $\protect\phi _{+\pm -+}$, $\protect\phi _{+---}$, $\protect\phi _{+-+-}$, and $\protect\phi _{+-++}$. (d) Joint plot of the root branches of ranks 1 to 5;
positive branches of rank 5: $\protect\phi _{+++++}$, $\protect\phi _{+++\pm
-}$, $\protect\phi _{++\pm -+}$, $\protect\phi _{++---}$, $\protect\phi _{++-+-}$, $\protect\phi _{++-++}$, $\protect\phi _{+--++}$, $\protect\phi _{+--+-}$, $\protect\phi _{+----}$, $\protect\phi _{+-\mp -+}$, $\protect\phi _{+-+--}$, $\protect\phi _{+-++-}$, and $\protect\phi _{+-+++}$.}
\label{figure4}
\end{figure}

In the panels of Figure \ref{figure4} we see that the $2^{\left\vert \sigma
\right\vert }$ root branches $\phi _{\sigma }(t)$, $1\leq \left\vert \sigma
\right\vert \leq 5$, build $2^{\left\vert \sigma \right\vert -1}$
parabola-like curves, which we denote $\phi _{\sigma _{1},...,\sigma
_{i-1},\pm ,\sigma _{i+1},...,\sigma _{n}}(t)$ ($1\leq i\leq n$), this
notation meaning that the curves $\phi _{\sigma _{1},...,\sigma
_{i}=+,...,\sigma _{n}}(t)$ and $\phi _{\sigma _{1},...,\sigma
_{i}=-,...,\sigma _{n}}(t)$ (the branches of the parabola) emerge from a
common vertex $(t_{b},\phi _{\sigma _{1},...,\sigma _{i}=+,...,\sigma
_{n}}(t_{b}))=(t_{b},\phi _{\sigma _{1},...,\sigma _{i}=-,...,\sigma
_{n}}(t_{b}))$ with a vertical tangent. The vertex and the abscissa $t_{b}$
will be called indistinctly branching point (geometrical terminology) or
bifurcation point (dynamical terminology) of the parabola or any of its
branches. Root parabolas with the vertex on the $t$-axis, $\phi _{\pm
,\sigma _{2},...,\sigma _{n}}(t)$, are sometimes called \textit{on-line
parabolas}, otherwise \textit{off-axis parabolas}. The branching point $t_{b} $ has also a direct geometrical interpretation in state space: the
curve $q_{t_{b}}^{\left\vert \sigma \right\vert }(x)$ intersects
tangentially the $x $-axis (the critical line) at the point $x=\phi _{\sigma
_{1},...,\sigma _{i-1},\pm ,\sigma _{i+1},...,\sigma _{n}}(t_{b})$. The
opening of the branches to the right means that, if the contact occurs from
the upper half-plane as $t$ increases, the corresponding local extremum is a
minimum, whereas if the contact occurs from the lower half-plane, it is a
maximum. In panel (d) of Figure \ref{figure4} we see that different branches
do not cross but touch at the bifurcation points (\textquotedblleft
T-crossings\textquotedblright ). We will show below that all these
properties hold in general.

\subsection{Smoothness domains of the root branches}\label{section32}

A crucial issue for our purposes is the distinction between $\mathbf{dom\,}\phi _{\sigma }$ and $\mathbf{sdom\,}\phi _{\sigma }$, the subset of $\mathbf{dom\,}\phi _{\sigma }$ where $\phi _{\sigma }(t)$ is smooth. As it
will turn out in Section \ref{section4}, $\mathbf{sdom\,}\phi _{\sigma }$ comprises the
parametric values $t$ for which the root $\phi _{\sigma }(t)$ is simple
---precisely the $t$'s that count for $s_{\left\vert \sigma \right\vert }(t)$, Equation (\ref{s^n(t)}). Therefore $\mathbf{sdom\,}\phi _{\sigma }$ can be
read not only as \textquotedblleft smoothness domain\textquotedblright\ but
also as \textquotedblleft simplicity domain\textquotedblright .

To learn about $\mathbf{sdom\,}\phi _{\sigma }$, we go back to the
neighborhood $U\subset \lbrack 0,2]$ of $t=2$ where the $2^{n}$ distinct
root branches $\phi _{\sigma _{1},...,\sigma _{n}}(t)$ are locally defined
and continuously differentiable. This neighborhood can be extended to
include lower and lower $t$ values as long as $\partial q_{t}^{n}(\phi
_{\sigma _{1},...,\sigma _{n}}(t))/\partial x\neq 0$, i.e., as long as $\phi
_{\sigma _{1},...,\sigma _{n}}(t)$ has not a vertical tangent. Since 
\begin{equation}
\frac{\partial q_{t}^{n}}{\partial x}(x)=\prod\limits_{k=0}^{n-1}\frac{dq_{t}}{dx}(q_{t}^{k}(x))=(-2)^{n}xq_{t}(x)\cdots q_{t}^{n-1}(x),  \label{Equat}
\end{equation}the obstruction $\partial q_{t}^{n}(\phi _{\sigma _{1},...,\sigma
_{n}}(t))/\partial x=0$ occurs whichever condition

\begin{description}
\item[(C)] $q_{t}^{k}(\phi _{\sigma _{1},...,\sigma _{n}}(t))=0$ ($0\leq
k\leq n-1$)
\end{description}

\noindent is fulfilled first. Conditions (\textbf{C}) comprise those
parametric values $t$ for which $\phi _{\sigma _{1},...,\sigma _{n}}(t)$ is
the critical point ($k=0$) and, for $n\geq 2$, any of its, at most $2^{n}-2$, preimages up to order $n-1$.

If $k=0$, then $q_{t}^{0}(\phi _{\sigma _{1},...,\sigma _{n}}(t))\equiv \phi
_{\sigma _{1},...,\sigma _{n}}(t)=0$. If $1\leq k\leq n-1$ ($n\geq 2$), then
use Equation (\ref{phi=x}) to derive\begin{equation}
q_{t}(\phi _{\sigma _{1},...,\sigma _{n}}(t))=t-\phi _{\sigma
_{1},...,\sigma _{n}}(t)^{2}=-\phi _{\sigma _{2},...,\sigma _{n}}(t)
\label{Ca}
\end{equation}and, in general, 
\begin{equation}
q_{t}^{k}(\phi _{\sigma _{1},...,\sigma _{n}}(t))=q_{t}(-\phi _{\sigma
_{k},...,\sigma _{n}}(t))=-\phi _{\sigma _{k+1},...,\sigma _{n}}(t),
\label{Cb}
\end{equation}so the conditions (\textbf{C}) amount to:

\begin{description}
\item[(C')] $\phi _{\sigma _{k+1},...,\sigma _{n}}(t)=0$ ($0\leq k\leq n-1$).
\end{description}

Note that\begin{equation}
\phi _{\sigma _{k+1},...,\sigma _{n}}(t_{b})=0\;\Rightarrow \;\phi _{-\sigma
_{k+1},\sigma _{k+2}...,\sigma _{n}}(t_{b})=-\phi _{\sigma _{k+1},\sigma
_{k+2}...,\sigma _{n}}(t_{b})=0,  \label{C''b}
\end{equation}therefore,\begin{equation}
\phi _{\sigma _{k+1},...,\sigma _{n}}(t_{b})=0\;\Rightarrow \;\left\{ 
\begin{array}{ll}
\phi _{\pm \sigma _{1},...,\sigma _{n}}(t_{b})=0 & \text{if }k=0, \\ 
\phi _{\sigma _{1},...,\pm \sigma _{k+1},...,\sigma _{n}}(t_{b})=\phi
_{\sigma _{1},...,\sigma _{k}}(t_{b}) & \text{if }1\leq k\leq n-1\text{,}\end{array}\right.  \label{C''c}
\end{equation}which means that $0$ ($k=0$) or $\phi _{\sigma _{1},...,\sigma _{k}}(t_{b})$
($1\leq k\leq n-1$) is a multiple zero of $q_{t_{b}}^{n}(x)$. Such a point $t_{b}$ is a \textit{branching} (or \textit{bifurcation}) \textit{point} of $\phi _{\sigma _{1},...,\pm \sigma _{k+1},...,\sigma _{n}}(t)$ if both
branches are defined in a neighborhood of $t_{b}$; otherwise, $t_{b}$ is an 
\textit{isolated point} of $\mathbf{dom\,}\phi _{\sigma _{1},...,\pm \sigma
_{k+1},...,\sigma _{n}}$. (Actually, one can check that the isolated points
of $\phi _{\sigma }(t)$, if any, correspond to branching points of some
predecessor.) Branching points and isolated points are called \textit{singular points}; the complement are the \textit{regular points} of the
corresponding root parabola or branches. This proves the following result:

\begin{Proposition}
\label{Propo40}The singular points $t_{b}$ of $\mathbf{dom\,}\phi _{\sigma }$
correspond to multiple zeros of $q_{t_{b}}^{n}(x)$. In either case, $\phi
_{\sigma _{k+1},...,\sigma _{n}}(t_{b})=0$ for some $k=0,1,...,n-1$.
\end{Proposition}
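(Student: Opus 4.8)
The plan is to read the proposition straight off the continuation analysis carried out in the paragraphs preceding it: all the ingredients are already assembled, and what remains is to chain the equivalences together and make sure the two flavours of singular point — branching and isolated — are both covered.

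First I would recall the set-up. By the Implicit Function Theorem the $2^n$ branches $\phi_{\sigma_1,\dots,\sigma_n}(t)$ are smooth on a neighbourhood of $t=2$, and each $\phi_\sigma$ extends to the left exactly as long as $\partial q_t^n(\phi_\sigma(t))/\partial x\neq 0$; a singular point $t_b$ of $\mathbf{dom}\,\phi_\sigma$ is, by construction, a parameter value at which this derivative first vanishes, i.e. where $\phi_\sigma$ acquires a vertical tangent. Substituting $x=\phi_\sigma(t_b)$ into the factorisation (\ref{Equat}), $\partial_x q_t^n(x)=(-2)^n\,x\,q_t(x)\cdots q_t^{n-1}(x)$, the vanishing of the derivative is equivalent to $q_{t_b}^{k}(\phi_\sigma(t_b))=0$ for some $0\le k\le n-1$, which is condition \textbf{(C)}; rewriting each iterate through (\ref{Ca})--(\ref{Cb}) as $q_{t_b}^{k}(\phi_\sigma(t_b))=-\phi_{\sigma_{k+1},\dots,\sigma_n}(t_b)$ turns this into condition \textbf{(C')}, namely $\phi_{\sigma_{k+1},\dots,\sigma_n}(t_b)=0$ for some $k\in\{0,\dots,n-1\}$. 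This is the second assertion of the proposition, and — reading the same equivalences backwards — shows the correspondence is exact.

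For the first assertion I would feed $\phi_{\sigma_{k+1},\dots,\sigma_n}(t_b)=0$ into (\ref{C''b})--(\ref{C''c}): it forces $\phi_{-\sigma_{k+1},\sigma_{k+2},\dots,\sigma_n}(t_b)=0$ as well, so the two distinct branches $\phi_{\sigma_1,\dots,+\sigma_{k+1},\dots,\sigma_n}$ and $\phi_{\sigma_1,\dots,-\sigma_{k+1},\dots,\sigma_n}$ take a common value at $t_b$ — equal to $0$ when $k=0$ and to $\phi_{\sigma_1,\dots,\sigma_k}(t_b)$ when $k\ge 1$. That common value is a zero of the degree-$2^n$ polynomial $q_{t_b}^n(x)$ at which, by \textbf{(C)}, $\partial_x q_{t_b}^n$ also vanishes, hence a zero of multiplicity at least two. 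Since a singular point is defined to be precisely a branching point (when both branches of the coalescing parabola live near $t_b$) or an isolated point (otherwise), both cases are subsumed under this argument.

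The one point that needs genuine care — and the main obstacle I anticipate — is the isolated-point case together with the implicit continuation step: one has to argue that the maximal left-extension of $\phi_\sigma$ indeed terminates at a zero of $\partial_x q_t^n(\phi_\sigma(t))$ (rather than at an endpoint of the parameter interval, which compactness of $[0,2]$ and continuity of $\phi_\sigma$ where defined rule out), and that when $t_b$ belongs to $\mathbf{dom}\,\phi_\sigma$ only as an isolated point the equality $\phi_{\sigma_{k+1},\dots,\sigma_n}(t_b)=0$ still holds. For the latter I would invoke the observation already recorded in parentheses above — an isolated point of $\phi_\sigma$ is a branching point of a predecessor $\phi_{\sigma_1,\dots,\sigma_m}$ — and apply the same derivative computation to that predecessor, which brings us back to the branching-point situation already handled.
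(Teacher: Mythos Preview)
Your proposal is correct and follows essentially the same route as the paper: there the proposition is stated immediately after the discussion of conditions \textbf{(C)}, \textbf{(C')} and equations (\ref{C''b})--(\ref{C''c}) with the phrase ``This proves the following result,'' so the preceding paragraphs \emph{are} the proof, and you have faithfully reassembled them. Your extra care with the isolated-point case is not strictly needed for the proposition as stated (which concerns singular points of $\mathbf{dom}\,\phi_\sigma$, where $\phi_\sigma$ is at least defined), but it does no harm.
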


Furthermore, if $\left\vert \sigma \right\vert =s\geq r=\left\vert \rho
\right\vert $ and $\phi _{\sigma }(t_{0})=\phi _{\rho }(t_{0})$, i.e.,\begin{equation}
\sigma _{1}\sqrt{t_{0}+\phi _{\sigma _{2},...,\sigma _{s}}(t_{0})}=\rho _{1}\sqrt{t_{0}+\phi _{\rho _{2},...,\rho _{r}}(t_{0})},  \label{C''d}
\end{equation}then\begin{equation}
\phi _{\sigma }(t_{0})=\phi _{\rho }(t_{0})=0\;\;\text{if }\sigma _{1}\neq
\rho _{1};  \label{C''e}
\end{equation}otherwise, keep squaring the Equation (\ref{C''d}) and recursively applying
Equation (\ref{C''e}) to the resulting equalities to derive:\begin{equation}
\sigma _{i}\neq \rho _{i}\text{ for some }1\leq i\leq r\;\;\Rightarrow
\;\;\phi _{\sigma _{i},...,\sigma _{s}}(t_{0})=\phi _{\rho _{i},...,\rho
_{r}}(t_{0})=0,  \label{C''f}
\end{equation}or else\begin{equation}
\sigma _{i}=\rho _{i}\text{ for }1\leq i\leq r\text{ and }s>r\;\;\Rightarrow
\;\;\phi _{\sigma _{r+1},...,\sigma _{s}}(t_{0})=0.  \label{C''g}
\end{equation}By Proposition \ref{Propo40}, $t_{0}$ is a singular point of $\phi _{\sigma
}(t)$. We conclude:

\begin{Proposition}
\label{Propo4} A root branch $\phi _{\sigma }(t)$ can be smoothly extended
from the boundary $t=2$ to a maximal interval $\mathbf{sdom\,}\phi _{\sigma
}:=(t_{\sigma },2]$, where $t_{\sigma }=\max \{t\in \lbrack 0,2):\phi
_{\sigma _{k+1},...,\sigma _{n}}(t)=0$ for some $0\leq k\leq n-1\}$ is a
branching point of $\phi _{\sigma }(t)$. Moreover, $\phi _{\sigma }(t)\neq
\phi _{\rho }(t)$ for $\sigma \neq \rho $ and $t\in $ $\mathbf{sdom\,}\phi
_{\sigma }\cap \mathbf{sdom\,}\phi _{\rho }$.
\end{Proposition}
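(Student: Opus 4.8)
The plan is to establish both assertions by a left‑continuation argument organised around the nested‑radical formula (\ref{phi=x}) and an induction on the rank $n=\lvert\sigma\rvert$, the ``moreover'' part then dropping out of the squaring identities (\ref{C''d})--(\ref{C''g}) together with Proposition \ref{Propo40}.

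First I would introduce the obstruction set $S_\sigma=\{t\in[0,2):\phi_{\sigma_{k+1},\dots,\sigma_n}(t)=0\text{ for some }0\le k\le n-1\}$. It is nonempty because $\phi_\rho(0)=0$ for every signature $\rho$, and it is closed in $[0,2]$ because each $\phi_\rho$ is continuous on its (closed) definition domain, a finite union of closed sets is closed, and $2\notin S_\sigma$ since $\phi_\rho(2)=\bar x_\rho\neq 0$; hence $t_\sigma:=\max S_\sigma$ exists and lies in $[0,2)$. By the Implicit Function Theorem $\phi_\sigma$ is defined and $C^\infty$ near $\bar t=2$, and by the chain‑rule identity (\ref{Equat}) it can be continued to the left exactly as long as condition (C$'$) fails, i.e. as long as no tail $\phi_{\sigma_{k+1},\dots,\sigma_n}$ vanishes; on the continuation interval the continued branch coincides, by uniqueness in the Implicit Function Theorem, with the explicit function (\ref{phi=x}). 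To see that the continuation runs over all of $(t_\sigma,2]$, write $\sigma=(\sigma_1,\sigma')$ with $\lvert\sigma'\rvert=n-1$ and use (\ref{phi=x}) in the form $\phi_\sigma(t)=\sigma_1\sqrt{g(t)}$, $g(t)=t+\phi_{\sigma'}(t)$. One has $t_{\sigma'}\le t_\sigma$ (every zero set entering $S_{\sigma'}$ also enters $S_\sigma$), so by the inductive hypothesis $\phi_{\sigma'}$, hence $g$, is $C^\infty$ on $(t_\sigma,2]$; since $g$ has no zero there (definition of $t_\sigma$) and $g(2)=2+\bar x_{\sigma'}>0$, the intermediate value theorem gives $g>0$ on all of $(t_\sigma,2]$, whence $\phi_\sigma=\sigma_1\sqrt g$ is $C^\infty$ there. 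At $t_\sigma$ some tail vanishes, so by (\ref{Equat}) and (\ref{Cb}) one gets $\partial q_{t_\sigma}^n/\partial x(\phi_\sigma(t_\sigma))=0$, so the graph $y=q_{t_\sigma}^n(x)$ is tangent to $y=0$ at $x=\phi_\sigma(t_\sigma)$ and the branch cannot be continued past $t_\sigma$; the maximal continuation interval is therefore exactly $(t_\sigma,2]=:\mathbf{sdom}\,\phi_\sigma$.

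That $t_\sigma$ is a branching point of $\phi_\sigma$ (a singular point near which the two branches differing in the sign of one component are both defined, in the sense of the paragraph after (\ref{C''c})) I would prove by the same induction, splitting by (\ref{C''c}) into two cases. If $\phi_\sigma(t_\sigma)=0$, the sibling branch is $-\phi_\sigma$, defined on $(t_\sigma,2]$ with limit $0$ at $t_\sigma$, so $(t_\sigma,0)$ is the vertex of the on‑line parabola $\phi_{\pm\sigma_1,\sigma_2,\dots,\sigma_n}$. If $\phi_\sigma(t_\sigma)\neq 0$, the vanishing occurs at a proper tail of $\sigma$, which is also a tail of $\sigma'$, at the same parameter $t_{\sigma'}=t_\sigma$; by the inductive hypothesis $t_\sigma$ is a branching point of $\phi_{\sigma'}$, and inserting its two sibling branches into $\phi_\sigma(t)=\sigma_1\sqrt{t+(\cdot)}$ --- legitimate because $g(t_\sigma)=\phi_\sigma(t_\sigma)^2>0$ here --- produces the two sibling branches of $\phi_\sigma$, both defined on a one‑sided neighbourhood of $t_\sigma$ with common vertex $\sigma_1\sqrt{g(t_\sigma)}$. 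In either case $t_\sigma$ is a singular point of $\mathbf{dom}\,\phi_\sigma$ by Proposition \ref{Propo40}, hence a branching and not an isolated point. The ``moreover'' part is then immediate: if $\phi_\sigma(t_0)=\phi_\rho(t_0)$ with $\sigma\neq\rho$ and, say, $\lvert\sigma\rvert\ge\lvert\rho\rvert$, the chain (\ref{C''d})--(\ref{C''g}) forces some tail $\phi_{\sigma_{k+1},\dots,\sigma_n}(t_0)=0$, i.e. $t_0\in S_\sigma$, so $t_0\le t_\sigma$ and hence $t_0\notin(t_\sigma,2]=\mathbf{sdom}\,\phi_\sigma$, contradicting $t_0\in\mathbf{sdom}\,\phi_\sigma\cap\mathbf{sdom}\,\phi_\rho$.

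The step I expect to be the genuine obstacle, as opposed to bookkeeping, is the claim that the continuation really stops at $t_\sigma$ --- equivalently, that $\mathbf{sdom}\,\phi_\sigma$ has no isolated smooth piece to the left of $t_\sigma$ and that the contact of $y=q_{t_\sigma}^n(x)$ with $y=0$ at $\phi_\sigma(t_\sigma)$ is a bona fide ``T‑crossing'' with a vertical tangent rather than a removable non‑smoothness or an ``X‑crossing''. After the reduction above this comes down to the radicand $g$ vanishing to odd order at $t_\sigma$, i.e. to $t_\sigma$ being a simple root of the polynomial $t\mapsto q_t^n(0)$ --- the transversality of the critical orbit at the (pre)periodic parameter $t_\sigma$, which is the real‑analytic version of the rigidity statement alluded to in the Introduction. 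Securing this by real analysis is the one place where an argument beyond the nested‑radical recursion is needed; everything else is the bookkeeping sketched above.
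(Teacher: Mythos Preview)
Your first two paragraphs are correct and follow essentially the paper's own route: the paper presents Proposition~\ref{Propo4} as the conclusion of the discussion preceding it --- the Implicit Function Theorem near $t=2$, the chain-rule identity (\ref{Equat}), conditions (\textbf{C})/(\textbf{C$'$}), Proposition~\ref{Propo40}, and the squaring chain (\ref{C''d})--(\ref{C''g}). Your write-up is more explicit (the induction on rank, the observation $t_{\sigma'}\le t_\sigma$, the intermediate-value argument for $g>0$), but the skeleton is the same.

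Your third paragraph, however, misidentifies the difficulty. The worry that $\mathbf{sdom}\,\phi_\sigma$ might have an isolated smooth piece to the left of $t_\sigma$ is \emph{not} part of Proposition~\ref{Propo4}; that is exactly the content of Proposition~\ref{Propo5} (and its corollary Proposition~\ref{Propo5B}), which the paper proves separately and by a different argument. Proposition~\ref{Propo4} only claims that the smooth continuation \emph{from $t=2$} stops at $t_\sigma$, and this is already secured by your second paragraph: at $t_\sigma$ a tail vanishes, so either $\phi_\sigma(t_\sigma)=0$ and the two sibling branches $\pm\sqrt{g}$ meet, or the bifurcation is inherited from $\phi_{\sigma'}$ by your inductive step. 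In either case $\phi_\sigma$ coalesces with a distinct branch at $t_\sigma$, so as \emph{that particular} branch it cannot be continued smoothly; no statement about the order of vanishing of $g$, and no appeal to rigidity/transversality of $t\mapsto q_t^n(0)$, is required here. So drop the last paragraph (or relabel it as a remark anticipating Proposition~\ref{Propo5}) and your proof of Proposition~\ref{Propo4} stands.
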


In other words, root branches do not cross or touch in their smoothness
domains. Table \ref{table1}, obtained from Figure \ref{figure4}, lists the smoothness
domains $(t_{\sigma },2]$ of the 15 root parabolas up to rank 4. 

\begin{table}[H]
	\caption{Root parabolas of ranks 1 to 4.}
	\label{table1}
	\centering
	%% \tablesize{} %% You can specify the fontsize here, e.g., \tablesize{\footnotesize}. If commented out \small will be used.
	\begin{tabular}{cc}
		\toprule
		\textbf{Root parabolas}	& $\mathbf{sdom\,}\phi _{\sigma }$\\
		\midrule
		$\phi _{\pm },\phi _{\pm +},\phi _{\pm ++},\phi _{\pm +++}$	& $(0,2]$	\\
		$\phi _{\pm -},\phi _{+\pm -},\phi _{-\pm -},\phi_{++\pm -},\phi _{-+\pm -},\phi _{\pm ---}$ & $(1,2]$	\\
		$\phi _{\pm -+-}$ & $(1.3107...,2]$  \\
		$\phi _{\pm -+},\phi _{+\pm -+},\phi _{-\pm -+}$ & $(1.7549...,2]$ \\ 
		$\phi _{\pm -++}$ & $(1.9408...,2]$ \\
		\bottomrule
	\end{tabular}
\end{table}

The ordering of the branching points $t_{\sigma }$ is related to the
ordering of the root branches. Due to the strictly increasing/decreasing
monotonicity of the positive/negative square root function, $\phi _{\sigma
}(t)<\phi _{\rho }(t)$ implies 
\begin{equation}
\phi _{-,\rho }(t)<\phi _{-,\sigma }(t)<\phi _{+,\sigma }(t)<\phi _{+,\rho
}(t).  \label{P1_1}
\end{equation}Thus, attaching a sign \textquotedblleft $+$\textquotedblright\ (resp.
\textquotedblleft $-$\textquotedblright ) in front the signature preserves
(resp. reverses) the ordering. This generalizes to the following \textit{signed lexicographical order} for root branches.

\begin{Proposition}
\label{Propo12} Given $\sigma \neq \rho $ with $\left\vert \sigma
\right\vert \geq \left\vert \rho \right\vert $ and $t\in $ $\mathbf{sdom\,}\phi _{\sigma }\cap \mathbf{sdom\,}\phi _{\rho }$, the following holds.

(a) If $\sigma _{1}\neq \rho _{1}$ then 
\begin{equation}
\phi _{\sigma }(t)\left\{ 
\begin{array}{cc}
>\phi _{\rho }(t) & \text{if }\sigma _{1}=+1, \\ 
<\phi _{\rho }(t) & \text{if }\sigma _{1}=-1.\end{array}\right.  \label{L3_1}
\end{equation}

(b) If $\sigma _{i}=\rho _{i}$ for $i=1,...,k$, and $k=\left\vert \rho
\right\vert $ or $\sigma _{k+1}\neq \rho _{k+1}$, then\begin{equation}
\phi _{\sigma }(t)\left\{ 
\begin{array}{cc}
>\phi _{\rho }(t) & \text{if }\sigma _{1}\times ...\times \sigma _{k+1}=+1,
\\ 
<\phi _{\rho }(t) & \text{if }\sigma _{1}\times ...\times \sigma _{k+1}=-1.\end{array}\right.  \label{L3_2}
\end{equation}
\end{Proposition}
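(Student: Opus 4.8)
The plan is to prove Proposition~\ref{Propo12} by induction on the common initial length together with repeated use of the sign-monotonicity rule~(\ref{P1_1}). The base mechanism is already isolated in~(\ref{P1_1}): prepending ``$+$'' preserves the order of two branches, prepending ``$-$'' reverses it. So the whole statement reduces to understanding the case where the two signatures first differ, and then propagating the relation back through the common prefix one sign at a time.

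First I would treat part~(a), where $\sigma_1\neq\rho_1$, say $\sigma_1=+1$ and $\rho_1=-1$. Then $\phi_\sigma(t)=+\sqrt{t+\phi_{\sigma_2,\dots,\sigma_s}(t)}\geq 0$ while $\phi_\rho(t)=-\sqrt{t+\phi_{\rho_2,\dots,\rho_r}(t)}\leq 0$, and by Proposition~\ref{Propo4} the two branches cannot be equal on $\mathbf{sdom\,}\phi_\sigma\cap\mathbf{sdom\,}\phi_\rho$ (that intersection contains no singular point of either, where alone $\phi_\sigma=\phi_\rho$ could force both to be $0$); hence $\phi_\sigma(t)>\phi_\rho(t)$ strictly. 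The case $\sigma_1=-1$ is symmetric via $\phi_{-,\tau}=-\phi_{+,\tau}$. This gives~(\ref{L3_1}).

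Next, part~(b): suppose $\sigma_i=\rho_i$ for $i=1,\dots,k$ and either $k=|\rho|$ (so $\rho$ is a final-segment-stripped predecessor of $\sigma$, i.e. $\sigma$ is a successor of $\rho$) or $\sigma_{k+1}\neq\rho_{k+1}$. Consider the tail signatures $\sigma'=(\sigma_{k+1},\dots,\sigma_s)$ and $\rho'=(\rho_{k+1},\dots,\rho_r)$. In the first subcase $\rho'$ is empty and $\phi_{\rho'}(t)$ should be read as $\sqrt{t}$ appearing inside $\phi_\rho$; I would note $\phi_{\sigma'}(t)$ has the form $\sigma_{k+1}\sqrt{t+\cdots}$ and compare it to $\sqrt{t}$, observing that, at a point of $\mathbf{sdom\,}\phi_\sigma\cap\mathbf{sdom\,}\phi_\rho$, they cannot coincide (again Proposition~\ref{Propo4}), so the sign of $\phi_{\sigma'}(t)-\phi_{\rho'}(t)$ is governed by $\sigma_{k+1}$. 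In the second subcase $\sigma_{k+1}\neq\rho_{k+1}$ and part~(a) applied to $\sigma',\rho'$ gives $\phi_{\sigma'}(t)-\phi_{\rho'}(t)$ has the sign of $\sigma_{k+1}$. Either way, $\mathrm{sgn}(\phi_{\sigma'}(t)-\phi_{\rho'}(t))=\sigma_{k+1}$. Now I prepend $\sigma_k,\sigma_{k-1},\dots,\sigma_1$ one at a time: under the square root $t+\phi_{\sigma'}>t+\phi_{\rho'}$ (or $<$) is preserved, and the outer sign $\sigma_k$ either keeps or flips the inequality by~(\ref{P1_1}); after $k$ steps the sign of $\phi_\sigma(t)-\phi_\rho(t)$ becomes $\sigma_1\sigma_2\cdots\sigma_k\cdot\sigma_{k+1}=\sigma_1\times\cdots\times\sigma_{k+1}$, which is exactly~(\ref{L3_2}).

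The one point requiring care — and the main obstacle — is the strictness of the inequalities and the need to stay inside the smoothness domains throughout the prepending. Equation~(\ref{C''e}) shows two branches with opposite leading signs can only meet at the value $0$, and $\phi_\tau(t)=0$ exactly at the branching/singular points of the successors of $\phi_\tau$; so as long as $t$ lies in $\mathbf{sdom\,}\phi_\sigma\cap\mathbf{sdom\,}\phi_\rho$ — which by Proposition~\ref{Propo4} excludes the branching point $t_\sigma$ and $t_\rho$ — every intermediate tail branch $\phi_{(\sigma_j,\dots,\sigma_s)}(t)$ and $\phi_{(\rho_j,\dots,\rho_r)}(t)$ is nonzero, the radicands $t+\phi_{(\sigma_{j+1},\dots)}(t)$ are strictly positive, the square-root function is strictly monotone on the relevant range, and the strict inequality propagates without collapsing. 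I would spell this out once as a lemma-style remark (``if $\phi_\sigma(t)=\phi_\rho(t)$ at an interior $t$ of both smoothness domains then a contradiction with Proposition~\ref{Propo4} follows''), and then the induction goes through cleanly. Everything else is the bookkeeping of signs already packaged in~(\ref{P1_1}) and~(\ref{C''e})--(\ref{C''g}).
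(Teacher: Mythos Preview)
Your approach is exactly the one the paper has in mind: Proposition~\ref{Propo12} is stated in the paper immediately after the prepending rule~(\ref{P1_1}) as its direct generalization, with no further proof given, so your inductive unwinding of that rule together with Proposition~\ref{Propo4} for strictness is the intended argument.

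There is one small slip in your handling of the subcase $k=|\rho|$. You write that ``$\phi_{\rho'}(t)$ should be read as $\sqrt{t}$'' and then claim $\mathrm{sgn}(\phi_{\sigma'}(t)-\phi_{\rho'}(t))=\sigma_{k+1}$. With $\phi_{\rho'}=\sqrt{t}$ this is false when $\sigma_{k+1}=+$: then $\phi_{\sigma'}(t)=\sqrt{t+\phi_{\sigma_{k+2},\dots,\sigma_s}(t)}$, and its comparison with $\sqrt{t}$ is governed by the sign of $\phi_{\sigma_{k+2},\dots,\sigma_s}(t)$, i.e.\ by $\sigma_{k+2}$, not $\sigma_{k+1}$. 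The correct convention is $\phi_{\rho'}(t)=0$ for the empty signature (consistently with $q_t^{0}(0)=0$); then $\mathrm{sgn}(\phi_{\sigma'}(t)-0)=\sigma_{k+1}$ since $\phi_{\sigma'}(t)$ is nonzero on $\mathbf{sdom\,}\phi_\sigma$ by the characterization of $t_\sigma$ in Proposition~\ref{Propo4}, and your $k$-step prepending then yields $\sigma_1\cdots\sigma_{k+1}$ as required. Equivalently, you can compare at level $k$ instead: $\phi_{\sigma_k,\sigma'}(t)-\phi_{\sigma_k}(t)=\sigma_k\bigl(\sqrt{t+\phi_{\sigma'}(t)}-\sqrt{t}\bigr)$ has sign $\sigma_k\sigma_{k+1}$, and then only $k-1$ prependings remain. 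Either fix makes the argument go through; everything else in your outline is correct.
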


Since root branches do not cross or touch in their smoothness domains, they
can be ordered alternatively by $\phi _{\sigma }(2)$. According to Equation (\ref{C''c}), the inequalities (\ref{L3_1})-(\ref{L3_2}) can turn equalities
at a common singular point of $\mathbf{dom\,}\phi _{\sigma }\cap \mathbf{dom\,}\phi _{\rho }$.

As an example,\begin{equation}
\phi _{-,\{+\}^{n-1}}(t)<\phi _{-,\sigma }(t)<\phi
_{-,-,\{+\}^{n-2}}(t)<\phi _{+,-,\{+\}^{n-2}}(t)<\phi _{+,\sigma }(t)<\phi
_{\{+\}^{n}}(t),  \label{P1_5}
\end{equation}for all $t\in \mathbf{sdom\,}\phi _{\pm ,\sigma }$, where $\left\vert \sigma
\right\vert =n-1$. Equation (\ref{P1_5}) shows the upper and lower bounds of
the positive and negative root branches.

\begin{Remark}
\label{Remark2} According to Equation (\ref{P1_5}),\begin{equation}
\lim_{n\rightarrow \infty }\phi _{\{+\}^{n}}(t)=\frac{1}{2}(1+\sqrt{1+4t})=:\phi _{\{+\}^{\infty }}(t)  \label{upper envelope}
\end{equation}and\begin{equation}
\lim_{n\rightarrow \infty }\phi _{-,\{+\}^{n-1}}(t)=-\lim_{n\rightarrow
\infty }\phi _{\{+\}^{n-1}}(t)=-\frac{1}{2}(1+\sqrt{1+4t})=:\phi
_{-,\{+\}^{\infty }}(t)  \label{lower envelope}
\end{equation}are the optimal upper and lower bounds, respectively, of all root branches
for $t>0$. From Equations (\ref{I_t}) and (\ref{invphi(x)}) we see that\begin{equation}
\lbrack \phi _{-,\{+\}^{\infty }}(t),\phi _{\{+\}^{\infty
}}(t)]=I_{t}=[-2\mu ,2\mu ],  \label{I_(t)2}
\end{equation}where $\mu =$ $\frac{1}{4}(1+\sqrt{1+4t})$ is the parameter value of the
logistic map $\left. f_{\mu }\right\vert _{[0,1]}$ conjugate to $\left.
q_{t}\right\vert _{I_{t}}$. Therefore, all zeros of $q_{t}^{n}(x)$ are in
the open interval $\mathring{I}_{t}=(-2\mu ,2\mu )=(-\frac{1}{2}(1+\sqrt{1+4t}),\frac{1}{2}(1+\sqrt{1+4t}))$ for $n\geq 1$.
\end{Remark}

\section{Application I: Monotonicity of the topological entropy}\label{section4}

In Section \ref{section3}, the smooth root branch $\phi _{\sigma }(t)$ was extended from
a neighborhood of the boundary $t=2$ to a maximal interval $\mathbf{sdom\,}\phi _{\sigma }=(t_{\sigma },2]$, called the smoothness domain of $\phi
_{\sigma }$. The next Proposition excludes the possibility that $\phi
_{\sigma }(t)$ is also defined in an interval $(t_{1},t_{2})$ with $0\leq
t_{1}<t_{2}\leq t_{\sigma }$. By the same arguments used with $\mathbf{sdom\,}\phi _{\sigma }$, the endpoints $t_{1}$ and $t_{2}$ would be then branching
points of $\phi _{\sigma }(t)$.

\begin{figure}[tbp]
\centering
\includegraphics[height=3cm]{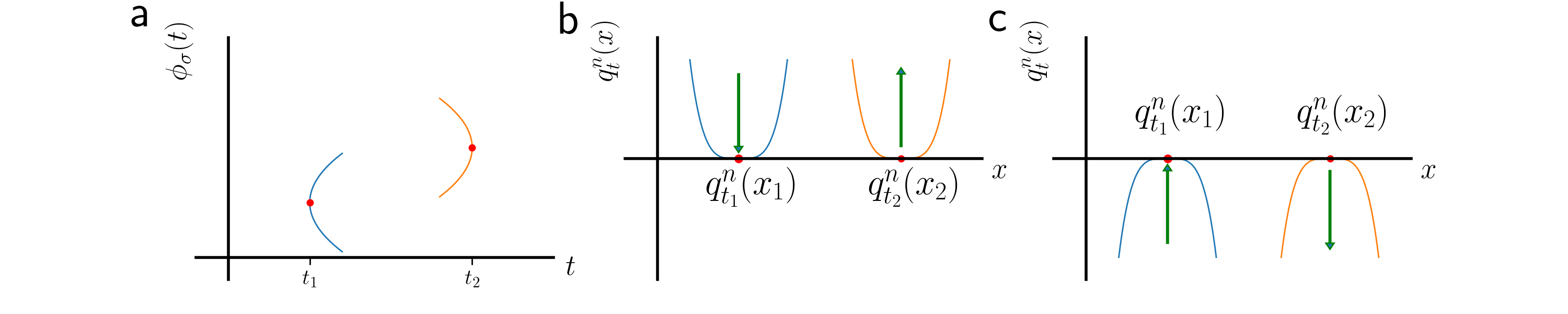}
\caption{(a) Root branches not connected to the boundary $t=2$. As a new
feature, the bifurcation at $t_{2}$ opens to the left. (b) A first
possibility in state space for local extrema of $q_{t}^{n}(x)$ to produce
the bifurcations in panel (a). (c) A second possibility in state space for
local extrema of $q_{t}^{n}(x)$ to produce the bifurcations in panel (a).}
\label{figure5}
\end{figure}

\begin{Proposition}
\label{Propo5} For all $\left\vert \sigma \right\vert \geq 1$, $\mathbf{dom\,}\phi _{\sigma }$ does not include intervals other than $[t_{\sigma },2].$
\end{Proposition}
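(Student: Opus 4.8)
The plan is to argue by contradiction: suppose $\phi_\sigma(t)$ were defined on some interval $[t_1,t_2]$ disjoint from (and to the left of) $\mathbf{sdom\,}\phi_\sigma=(t_\sigma,2]$. As remarked before the statement, $t_1$ and $t_2$ must both be branching points of $\phi_\sigma$, so by Proposition~\ref{Propo40} we have $\phi_{\sigma_{k+1},\dots,\sigma_n}(t_1)=0$ and $\phi_{\sigma_{j+1},\dots,\sigma_n}(t_2)=0$ for some indices $0\le k,j\le n-1$. The key structural fact I want to exploit is that the vanishing of a \emph{suffix} branch $\phi_{\sigma_{k+1},\dots,\sigma_n}$ at a parameter $t_b$ is itself a branching point of a lower-rank branch, and hence $t_b\ge t_{\sigma_{k+1},\dots,\sigma_n}$ — no suffix branch can vanish \emph{inside} its own smoothness domain (Proposition~\ref{Propo4}). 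So the bifurcations at $t_1,t_2$ are forced by zeros of strictly shorter signatures, and I would like to run an induction on $\left|\sigma\right|$.

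First I would set up the induction on the rank $n=\left|\sigma\right|$. The base case $n=1$: $\phi_{\pm}(t)=\pm\sqrt{t}$ has $\mathbf{dom\,}\phi_{\pm}=[0,2]=\mathbf{sdom\,}\phi_{\pm}\cup\{0\}$, which manifestly contains no extra interval. For the inductive step, observe from Equation~(\ref{phi=x}) that $\phi_\sigma(t)=\sigma_1\sqrt{t+\phi_{\sigma_2,\dots,\sigma_n}(t)}$, so $\mathbf{dom\,}\phi_\sigma$ is exactly the set of $t\in\mathbf{dom\,}\phi_{\sigma_2,\dots,\sigma_n}$ with $t+\phi_{\sigma_2,\dots,\sigma_n}(t)\ge 0$. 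By the inductive hypothesis $\mathbf{dom\,}\phi_{\sigma_2,\dots,\sigma_n}$ is a single interval $[t_{\sigma_2,\dots,\sigma_n},2]$ together with possibly some isolated points; on that interval $\phi_{\sigma_2,\dots,\sigma_n}$ is continuous. Hence $g(t):=t+\phi_{\sigma_2,\dots,\sigma_n}(t)$ is continuous on $[t_{\sigma_2,\dots,\sigma_n},2]$, and $\mathbf{dom\,}\phi_\sigma\cap[t_{\sigma_2,\dots,\sigma_n},2]$ is the closed set $\{g\ge 0\}$, which could a priori be a union of several intervals. So the real content is to show $\{t:g(t)\ge 0\}$ is a single interval (plus isolated points), i.e.\ that $g$ cannot dip below zero, come back up, and dip again within $[t_{\sigma_2,\dots,\sigma_n},2]$.

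The main obstacle, and the heart of the argument, is therefore controlling the sign changes of $g(t)=t+\phi_{\sigma_2,\dots,\sigma_n}(t)$. I would leverage two facts: (i) where $g(t)=0$, the suffix branch takes the value $\phi_{\sigma_2,\dots,\sigma_n}(t)=-t$, which is a \emph{specific} negative value, and at such a point the shorter branch $\phi_{\sigma_3,\dots,\sigma_n}$ etc.\ must be feeding a branching point — chasing this down via Propositions~\ref{Propo40} and~\ref{Propo4} pins the zero set of $g$ to branching points of predecessors; and (ii) the ordering/monotonicity information in Proposition~\ref{Propo12} and Equation~(\ref{P1_5}), giving $\phi_{-,\{+\}^{\infty}}(t)<\phi_{\sigma_2,\dots,\sigma_n}(t)$, together with $\phi_{-,\{+\}^{\infty}}(t)=-\tfrac12(1+\sqrt{1+4t})$. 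I expect one can show that on its smoothness domain each branch is monotone in $t$ (the positive branches increasing, etc.), so that $g$ is monotone there and crosses zero at most once; the isolated points of $\mathbf{dom\,}\phi_\sigma$ are exactly the leftover zeros of $g$ lying in the part of $\mathbf{dom\,}\phi_{\sigma_2,\dots,\sigma_n}$ that is itself isolated, matching the parenthetical remark after Proposition~\ref{Propo40} that isolated points of $\phi_\sigma$ come from branching points of predecessors. Wrapping up, once $\{g\ge0\}$ is shown to be an interval with isolated points attached, $\mathbf{dom\,}\phi_\sigma=[t_\sigma,2]$ plus isolated points follows, contradicting the assumed extra interval $(t_1,t_2)$; this closes the induction.
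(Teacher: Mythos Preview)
Your inductive scheme via $g(t)=t+\phi_{\sigma_2,\dots,\sigma_n}(t)$ is natural, but the step you flag with ``I expect one can show that on its smoothness domain each branch is monotone in $t$'' is where the argument breaks, and the break is fatal rather than cosmetic. Branches are \emph{not} monotone on their smoothness domains: every off-axis parabola $\phi_{\sigma_1,\dots,\pm\sigma_{k+1},\dots,\sigma_n}$ with $k\geq 1$ has both branches in the same half-plane, issuing from a common vertex with vertical tangents of opposite sign, so one of them necessarily reverses direction. Concretely, $\phi_{+,-,-}(t)=\sqrt{t-\sqrt{t-\sqrt t}}$ has $\mathbf{sdom}=(1,2]$; near $t=1$ its derivative is $-\infty$ (it is the lower branch of the parabola $\phi_{+,\pm,-}$), yet $\phi_{+,-,-}(2)\approx 1.111>1=\phi_{+,-,-}(1)$, so it first decreases and then increases. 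Hence for $\sigma_2=-$ the function $g(t)=t-\phi_{+,\sigma_3,\dots,\sigma_n}(t)$ need not be monotone, and you still have to show it changes sign at most once. Your alternative (i), tracing $g(t)=0$ back to a bisector intersection $t=\phi_{+,\sigma_3,\dots,\sigma_n}(t)$, is exactly the statement in Remark~\ref{Remark3} that the bisector meets each positive branch at most once --- but in the paper that fact is \emph{derived from} Proposition~\ref{Propo5B}, i.e.\ from the very result you are trying to prove, so invoking it here is circular. The induction therefore does not close.

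The paper's argument is completely different in spirit: it does not induct on $|\sigma|$ or use the recursion $\phi_\sigma=\sigma_1\sqrt{t+\phi_{\sigma_2,\dots,\sigma_n}}$ at all. Instead it passes to state space and observes that an extra interval $(t_1,t_2)\subset\mathbf{dom}\,\phi_\sigma$ would force a left-opening ``$\supset$'' bifurcation at $t_2$, corresponding to a local extremum of $x\mapsto q_{t_2}^n(x)$ tangent to the $x$-axis at $x_2=\phi_{\sigma_1,\dots,\sigma_{k_2}}(t_2)$ and moving through it in the wrong $t$-direction. A two-variable Taylor expansion of $q_t^n(x)$ at $(x_2,t_2)$ then shows that $q_{t_2-\Delta t}^n$ cannot change sign on any small neighbourhood of $x_2$, contradicting the fact that for $t$ just below $t_2$ the two branches of the supposed parabola give two transverse zeros near $x_2$. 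This local analytic argument is what replaces (and is genuinely needed in place of) the monotonicity you hoped for.
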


\begin{proof}
Suppose that the, say positive, root branch $\phi _{\sigma _{1},...,\sigma
_{n}}(t)$ is also defined in an interval $(t_{1},t_{2})\subset \lbrack 0,2]$, where $0\leq t_{1},t_{2}\leq t_{\sigma }$ are two branching points, hence, 
$\phi _{\sigma _{k_{1}+1},...,\sigma _{n}}(t_{1})=\phi _{\sigma
_{k_{2}+1},...,\sigma _{n}}(t_{2})=0$ for some $0\leq k_{1},k_{2}\leq n-1$
(Proposition \ref{Propo4}). In this case, the positive root branches $\phi
_{\sigma _{1},...,\pm \sigma _{k_{1}+1},...,\sigma _{n}}(t)$ and $\phi
_{\sigma _{1},...,\pm \sigma _{k_{2}+1},...,\sigma _{n}}(t)$ would compose
the two parabolas depicted in Figure 5(a) in a neighborhood of $t_{1}$ and $t_{2}$, respectively. 

The \textquotedblleft $\subset $\textquotedblright\ bifurcation at
\textquotedblleft time\textquotedblright\ $t_{1}$ corresponds to a local
minimum (resp. local maximum) of $q_{t}^{n}(x)$ crossing the $x$-axis from
above (resp. below) at the point $x_{1}=\phi _{\sigma _{1},...,\sigma
_{n}}(t_{1})=0$ if $k_{1}=0$ or $x_{1}=\phi _{\sigma _{1},...,\sigma
_{k_{1}}}(t_{1})>0$ if $k_{1}\geq 1$ ( see Figure \ref{figure5}(b)-(c) and
Equation (\ref{C''c})). Bifurcation points with branches opening to the
right occur at the left endpoint of the smoothness domains, in particular at 
$t=0$, so they are certainly allowed.

The \textquotedblleft $\supset $\textquotedblright\ bifurcation at
\textquotedblleft time\textquotedblright\ $t_{2}$ corresponds to local a
minimum (resp. maximum) of $q_{t}^{n}(x)$ crossing the $x$-axis from below
(resp. above) at the point $x_{2}=\phi _{\sigma _{1},...,\sigma
_{n}}(t_{2})=0$ if $k_{2}=0$ or $x_{2}=\phi _{\sigma _{1},...,\sigma
_{k_{2}}}(t_{2})>0$ (see Figure \ref{figure5}(b)-(c) and Equation (\ref{C''c})). To show that bifurcation points with branches opening to the left,
however, are not allowed, we are going to exploit the following \emph{Fact}
derived from the hypothetical existence of $\supset $\emph{\ }bifurcations.

\emph{Fact}: In both cases illustrated in Figure \ref{figure5}(b) (where $q_{t_{2}}^{n}(x_{2})$ is a local minimum and $\left. \partial
q_{t}^{n}(x_{2})/\partial t\right\vert _{t=t_{2}}\geq 0$) and Figure \ref{figure5}(c) (where $q_{t_{2}}^{n}(x_{2})$ is a local maximum $\left.
\partial q_{t}^{n}(x_{2})/\partial t\right\vert _{t=t_{2}}\leq 0$), given
any neighborhood of $x_{2}$, $(x_{2}-\varepsilon ,\,x_{2}+\varepsilon )$
with $\varepsilon >0$, there exists $\tau >0$ such that $q_{t_{2}-\bigtriangleup t}^{n}(x)$ changes sign in $(x_{2}-\varepsilon
,\,x_{2}+\varepsilon )$ for all $0<$ $\bigtriangleup t\leq \tau $ because,
by assumption, $q_{t}^{n}(x)$ intersects transversally the $x$-axis just
before $t=t_{2}.$

It is even more true: said change of sign occurs both in $(x_{2}-\varepsilon
,\,x_{2})$ due to the left branch, and in $(x_{2},\,x_{2}+\varepsilon )$ due
to the right branch. Note that the above \emph{Fact} does not hold for $\subset $\emph{\ }bifurcations.

Therefore, we will consider only the first case (Figure \ref{figure5}(b)
with $x=x_{2}$ and $t=t_{2}$). There are several subcases.

(a) If $\left. \partial q_{t}^{n}(x_{2})/\partial t\right\vert _{t=t_{2}}>0$, then 
\begin{equation}
q_{t_{2}-\bigtriangleup t}^{n}(x_{2}\pm \bigtriangleup x)=-\partial
_{t}q_{t_{2}}^{n}(x_{2})\cdot \bigtriangleup t+O(2),  \label{Eq(a)}
\end{equation}where $0<\bigtriangleup x\leq \varepsilon $, $0<\bigtriangleup t\leq \tau $,
and we wrote $\partial _{t}q_{t_{2}}^{n}(x_{2})$ $\equiv \left. \partial
_{t}q_{t}^{n}(x_{2})\right\vert _{t=t_{2}}$ for brevity. From Equation (\ref{Eq(a)}) it follows $q_{t_{i}-\bigtriangleup t}^{n}(x)<0$ in $(x_{2}-\varepsilon ,\,x_{2}+\varepsilon )$ for all $\bigtriangleup t$, which
contradicts the above \emph{Fact}. This excludes the possibility of having a $\supset $ bifurcation at $t_{2}$ if
the \textquotedblleft velocity\textquotedblright\ of $q_{t}^{n}(x_{2})$ at $t=t_{2}$ is positive.

(b) Suppose now $\partial q_{t_{2}}^{n}(x_{2})/\partial t=0$, so\begin{equation}
q_{t_{2}-\Delta t}^{n}(x_{2}\pm \bigtriangleup x)=\frac{1}{2}\partial
_{xx}q_{t_{2}}^{n}(x_{2})(\bigtriangleup x)^{2}\mp \partial
_{xt}^{2}q_{t_{2}}^{n}(x_{2})\bigtriangleup x\bigtriangleup t+\frac{1}{2}\partial _{tt}q_{t_{2}}^{n}(x_{2})(\bigtriangleup t)^{2}+O(3),  \label{Eq(b)}
\end{equation}where $\partial _{xx}q_{t_{2}}^{n}(x_{2})\equiv \left. \partial
_{xx}q_{t_{2}}^{n}(x)\right\vert _{x=x_{2}}\geq 0$ (because $q_{t_{2}}^{n}(x_{2})$ is a minimum in the case we are considering), $\partial _{tt}q_{t_{2}}^{n}(x_{2})\equiv \left. \partial
_{tt}q_{t}^{n}(x_{2})\right\vert _{t=t_{2}}$ and similarly for the mixed
term.

(b1) If $\partial _{xx}q_{t_{2}}^{n}(x_{2})>0$ and some of the other $O(2)$
terms is not zero, let $\bigtriangleup t\rightarrow 0$ while $\bigtriangleup
x$ is fixed to conclude from Equation (\ref{Eq(b)}) that $q_{t_{2}-\Delta
t}^{n}(x_{2}\pm \bigtriangleup x)$ does not change sign for sufficiently
small $\bigtriangleup t$, $0<\bigtriangleup t\ll \bigtriangleup x$, in
contradiction to the above \textit{Fact}. The same contraction follows, of
course, if all $O(2)$ terms in Equation (\ref{Eq(b)}) except $\partial
_{xx}q_{t_{2}}^{n}(x_{2})$ vanish.

(b2) If all terms $O(2)$ vanish at $x=x_{2}$ and $t=t_{2}$, repeat the same
argument with the terms $O(3)$. Since $q_{t_{2}}^{n}(x_{2})$ is a minimum
and $q_{t}^{n}(x)$ is a polynomial of degree $2^{n}$, it holds $\partial
_{x^m}^{m}q_{t_{2}}^{n}(x_{2})>0$ for some $2\leq m\leq 2^{n}$.
\end{proof}

A conclusion of the proof of Proposition \ref{Propo5} is that the root
branches do not have bifurcations with branches that open to the left or
bifurcations that open to the right except for the one at the left endpoint
of the smoothness domain. As a result:

\begin{Proposition}
\label{Propo5B}For all $\left\vert \sigma \right\vert \geq 1$, $\mathbf{sdom\,}\phi _{\sigma }=(t_{\sigma },2]$, where $0\leq t_{\sigma }<2$ is the
unique branching point of $\phi _{\sigma }$.
\end{Proposition}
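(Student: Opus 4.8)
The plan is to read off Proposition~\ref{Propo5B} from Propositions~\ref{Propo40}, \ref{Propo4} and \ref{Propo5}, so that the argument is short. Writing $n:=|\sigma|$, Proposition~\ref{Propo4} already gives $\mathbf{sdom}\,\phi_\sigma=(t_\sigma,2]$ with $t_\sigma$ a branching point of $\phi_\sigma$, and the formula $t_\sigma=\max\{t\in[0,2):\phi_{\sigma_{k+1},\dots,\sigma_n}(t)=0\text{ for some }0\le k\le n-1\}$ shows $0\le t_\sigma<2$, the maximum being taken over a nonempty subset of $[0,2)$ since $\phi_\rho(0)=0$ for every signature $\rho$. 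Hence everything is in place except the \emph{uniqueness} of the branching point, which is the only thing that actually needs to be proved.

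For uniqueness, first recall from Proposition~\ref{Propo40} and the equivalence of conditions (\textbf{C}) and (\textbf{C'}) that a point $t_b$ can be a branching point of $\phi_\sigma$ only if $\partial q_{t_b}^{n}(\phi_\sigma(t_b))/\partial x=0$, i.e. only if $\phi_\sigma$ has a vertical tangent at $t_b$ (the vertex of a root parabola). Now suppose $t_b$ is a branching point of $\phi_\sigma$ with $t_b\neq t_\sigma$. If $t_b\in(t_\sigma,2)$, then $\phi_\sigma$ is smooth at $t_b$ by Proposition~\ref{Propo4}, hence $\partial q_{t_b}^{n}(\phi_\sigma(t_b))/\partial x\neq0$, a contradiction; and $t_b=2$ is impossible because $\phi_{\sigma_{k+1},\dots,\sigma_n}(2)=\bar x_{\sigma_{k+1},\dots,\sigma_n}\neq0$ for every $k$. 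So necessarily $t_b<t_\sigma$. But as a branching point, $t_b$ is the vertex of a root parabola one of whose branches is $\phi_\sigma$, and by the conclusion drawn from the proof of Proposition~\ref{Propo5} every root parabola opens to the right, so $\phi_\sigma$ is defined on some right half-neighborhood $[t_b,t_b+\varepsilon)$. Shrinking $\varepsilon$ so that $t_b+\varepsilon<t_\sigma$ produces a nondegenerate interval contained in $\mathbf{dom}\,\phi_\sigma$ but disjoint from $[t_\sigma,2]$, contradicting Proposition~\ref{Propo5}. Therefore $t_b=t_\sigma$, i.e. $t_\sigma$ is the unique branching point of $\phi_\sigma$.

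It remains only to confirm the precise form $\mathbf{sdom}\,\phi_\sigma=(t_\sigma,2]$: the limit $\lim_{t\to t_\sigma^{+}}\phi_\sigma(t)$ exists and is finite, being the common vertex of the two parabola branches, so $\phi_\sigma(t_\sigma)$ is well defined and $t_\sigma\in\mathbf{dom}\,\phi_\sigma$, while $\phi_\sigma$ is not differentiable there because of the vertical tangent, which is exactly why $t_\sigma$ is excluded from the smoothness domain. I do not anticipate a genuine obstacle — the substantive work was already carried out in Proposition~\ref{Propo5} — but the one point that needs care is the bookkeeping around the definition of a branching point, in particular using the right-opening property of root parabolas to convert ``$t_b$ is a branching point of $\phi_\sigma$'' into ``$\phi_\sigma$ is defined on a nondegenerate interval immediately to the right of $t_b$'', which is the form in which Proposition~\ref{Propo5} can be invoked.
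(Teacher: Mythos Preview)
Your proposal is correct and follows essentially the same approach as the paper: the paper treats Proposition~\ref{Propo5B} as an immediate corollary, stating only that ``a conclusion of the proof of Proposition~\ref{Propo5} is that the root branches do not have bifurcations with branches that open to the left or bifurcations that open to the right except for the one at the left endpoint of the smoothness domain,'' and then declaring Proposition~\ref{Propo5B} ``as a result.'' You have simply unpacked this one-line remark into an explicit case analysis (ruling out $t_b\in(t_\sigma,2)$, $t_b=2$, and $t_b<t_\sigma$ separately), which is entirely in the spirit of the paper's argument.
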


\begin{Remark}
\label{Remark1} The images of the critical point build a sequence of
polynomials $P_{n}(t):=q_{t}^{n}(0)$, that is, $P_{n}(t)$ is the $n$th image
of $0$ under $q_{t}$. Alternatively, one can define polynomial maps $P_{n}:[0,2]\rightarrow \lbrack -2,2]$ by the recursion 
\begin{equation}
P_{0}(t)=0,\;\;P_{n}(t)=t-P_{n-1}(t)^{2}\text{\ \ for }n\geq 1\text{.}
\label{psi(t)}
\end{equation}Therefore, $P_{n}(t)$ is a polynomial of degree $2^{n-1}$ for $n\geq 1$, and\begin{equation}
P_{n+k}(t)=q_{t}^{n+k}(0)=q_{t}^{n}(q_{t}^{k}(0))=q_{t}^{n}(P_{k}(t)).
\label{psi(t)B}
\end{equation}The first polynomials are:\begin{equation}
\begin{array}{l}
P_{1}(t)=t \\ 
P_{2}(t)=t-t^{2} \\ 
P_{3}(t)=t-t^{2}+2t^{3}-t^{4} \\ 
P_{4}(t)=t-t^{2}+2t^{3}-5t^{4}+6t^{5}-6t^{6}+4t^{7}-t^{8}\end{array}
\label{phi(t)}
\end{equation}If, as in the proof of Proposition \ref{Propo5}, we interpret the parameter $t$ as time, then the time of passage of $q_{t}^{n}(0)$ through the $x$-axis
is given by the zeros of $P_{n}(t)=0$. Note that\begin{equation}
P_{n}(0)=0\;\text{ for }n\geq 1,  \label{psi(0)}
\end{equation}while\begin{equation}
P_{1}(2)=2,\;P_{n}(2)=-2\text{ \ for }n\geq 2  \label{phi(t)B}
\end{equation}(see Figure \ref{figure2}(d) for $n=4$). In physical terms, $q_{t}(0)$ moves
upwards from $x=0$ ($t=0$) to $x=2$ ($t=2$) at constant speed $\dot{P}_{1}(t)=1$ (the dot denotes time derivative), while, for $n\geq 2$, $q_{t}^{n}(0)$ moves from $x=0$ ($t=0$) to $x=-2$ ($t=2$), reversing the
speed when $\dot{P}_{n}(t)=0$ and crossing the $x$-axis when $P_{n}(t)=0$.
In Section \ref{section5} we will come back to these polynomials from a different
perspective.
\end{Remark}

At this point we have already cleared our way to the monotonicity of the
topological entropy for the quadratic family,\begin{equation}
h(q_{t})=\lim_{n\rightarrow \infty }\frac{1}{n}\log \left(
1+\sum\limits_{k=0}^{n-1}s_{k}(t)\right) ,  \label{h(q_t)}
\end{equation}where $s_{k}(t)$ is the number of \textit{simple} zeros of $q_{t}^{k}(x)$
or, equivalently, the number of \textit{transverse} intersections of the
curve $y=q_{t}^{k}(x)$ with the critical line $y=0$ (the $x$-axis); see
Equation (\ref{s^n(t)}). According to Equation (\ref{P1_5}) and Remark \ref{Remark2}, all zeros of $q_{t}^{k}(x)$ are in the interval $[\phi
_{-,\{+\}^{k-1}}(t),\phi _{\{+\}^{k}}(t)]$ $\subset (\phi _{-,\{+\}^{\infty
}}(t),\phi _{\{+\}^{\infty }}(t))=\mathring{I}_{t}$.

It follows from Propositions \ref{Propo40} and \ref{Propo5B}, that, for each
signature $\sigma $ with $\left\vert \sigma \right\vert =n$, $\mathbf{dom}\,\phi _{\sigma }\backslash \mathbf{sdom\,}\phi _{\sigma }$ comprises
multiple roots of $q_{t}^{n}(t)$ (isolated points and the branching point $t_{\sigma }$), while the roots $\phi _{\sigma }(t)$ are simple for $t_{\sigma }<t\leq 2$ by Proposition \ref{Propo4}. The bottom line is:

\begin{Proposition}
\label{Propo5C}The smoothness domain $\mathbf{sdom\,}\phi _{\sigma }$
comprises the values of $t$ for which the root $\phi _{\sigma }(t)$ of $q_{t}^{\left\vert \sigma \right\vert }(x)$ is simple.
\end{Proposition}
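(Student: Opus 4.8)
The plan is to reduce the statement to a pointwise simplicity criterion for the root $\phi_\sigma(t)$ and then match that criterion against the description of $\mathbf{sdom\,}\phi_\sigma$ already obtained. Fix $\sigma$ with $\left\vert\sigma\right\vert=n$ and $t\in\mathbf{dom\,}\phi_\sigma$. Since $q_t^n(x)$ is a polynomial and $q_t^n(\phi_\sigma(t))=0$, the root $\phi_\sigma(t)$ is simple precisely when $\partial q_t^n(\phi_\sigma(t))/\partial x\neq0$ --- the very dichotomy that defines $s_n(t)$ in (\ref{s^n(t)}). So the first step is to evaluate this derivative along the branch. Feeding the identities (\ref{Cb}) (together with the trivial $q_t^0(\phi_\sigma(t))=\phi_\sigma(t)$) into the chain-rule factorization (\ref{Equat}) gives
\begin{equation*}
\frac{\partial q_t^n}{\partial x}(\phi_\sigma(t))=(-2)^n\prod_{k=0}^{n-1}q_t^k(\phi_\sigma(t))=(-1)^{n-1}(-2)^n\prod_{k=0}^{n-1}\phi_{\sigma_{k+1},\dots,\sigma_n}(t),
\end{equation*}
so that $\phi_\sigma(t)$ is a simple root of $q_t^n$ if and only if $\phi_{\sigma_{k+1},\dots,\sigma_n}(t)\neq0$ for every $k=0,1,\dots,n-1$.

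For the inclusion $\mathbf{sdom\,}\phi_\sigma\subseteq\{t\in\mathbf{dom\,}\phi_\sigma:\phi_\sigma(t)\text{ is a simple root of }q_t^n\}$ I would invoke Proposition \ref{Propo4}, which identifies $t_\sigma=\max\{t\in[0,2):\phi_{\sigma_{k+1},\dots,\sigma_n}(t)=0\text{ for some }0\le k\le n-1\}$, together with Proposition \ref{Propo5B}, by which $\mathbf{sdom\,}\phi_\sigma=(t_\sigma,2]$. For $t\in(t_\sigma,2]$ all the tails $\phi_{\sigma_{k+1},\dots,\sigma_n}$ are defined (the nested radicals in (\ref{phi=x}) make each tail defined wherever $\phi_\sigma$ is) and, by maximality of $t_\sigma$, none of them vanishes; hence the criterion of the first step applies and $\phi_\sigma(t)$ is simple.

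For the reverse inclusion I would use Proposition \ref{Propo5} (or \ref{Propo5B}): $\mathbf{dom\,}\phi_\sigma$ is the half-interval $[t_\sigma,2]$ together with at most finitely many isolated points, so $\mathbf{dom\,}\phi_\sigma\setminus\mathbf{sdom\,}\phi_\sigma$ consists exactly of the singular points of $\mathbf{dom\,}\phi_\sigma$ (the branching point $t_\sigma$ and the isolated points). By Proposition \ref{Propo40}, at each such $t$ one has $\phi_{\sigma_{k+1},\dots,\sigma_n}(t)=0$ for some $0\le k\le n-1$, so the derivative displayed above vanishes and $\phi_\sigma(t)$ is a multiple root. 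The two inclusions together give the claim.

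The proof is mostly bookkeeping layered on the earlier analysis, and the only place that calls for a little care is the derivative computation in the first step: correctly rewriting the product $\prod_{k=0}^{n-1}q_t^k(\phi_\sigma(t))$ as a product of root branches via (\ref{Cb}), and checking that every factor $\phi_{\sigma_{k+1},\dots,\sigma_n}(t)$ really is defined at the $t$ under consideration. Beyond that there is no substantive obstacle; the genuine content was already supplied by Propositions \ref{Propo4}, \ref{Propo40} and \ref{Propo5}.
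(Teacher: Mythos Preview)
Your argument is correct and follows essentially the same route as the paper: the paper derives the statement in one sentence from Propositions \ref{Propo40}, \ref{Propo4} and \ref{Propo5B}, using the simplicity criterion implicit in conditions (\textbf{C})/(\textbf{C}') and Equation (\ref{Equat}). You simply make that criterion explicit by writing out the product formula for $\partial q_t^n(\phi_\sigma(t))/\partial x$ in terms of the tail branches, which is a welcome clarification but not a different idea.
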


For this reason we anticipated at the beginning of Section \ref{section32} that $\mathbf{sdom\,}\phi _{\sigma }$ may be called the \textit{simplicity domain} of $\phi _{\sigma }$ as well. This being the case, each root $\phi _{\sigma }(t)$
contributes 0 or 1 to $s_{n}(t)$, the number of simple zeros of $q_{t}^{n}(x) $, depending on whether $0<t\leq t_{\sigma }$ or $t_{\sigma
}<t\leq 2$, respectively. We conclude that\begin{equation}
s_{n}(t)=\sum_{\sigma \in \{+,-\}^{n}}\chi _{(t_{\sigma },2]}(t)=2\sum_{\rho
\in \{+,-\}^{n-1}}\chi _{(t_{+,\rho },2]}(t)  \label{charact}
\end{equation}for $n\geq 1$, where we used $t_{-\sigma _{1},\sigma _{2},...,\sigma
_{n}}=t_{\sigma _{1},\sigma _{2},...,\sigma _{n}}$, and $\chi _{(t_{\sigma
},2]}(t)$ is the characteristic or indicator function of the interval $(t_{\sigma },2]$ ($1$ if $t\in (t_{\sigma },2]$, 0 otherwise). Equation (\ref{charact}) proves:

\begin{Theorem}
\label{Theo8} The function $s_{n}:[0,2]$ $\rightarrow \{0,2,4,...,2^{n}\}$,
defined in Equation (\ref{s^n(t)}), is piecewise constant and monotonically increasing for every $n\geq 1$. Its discontinuities occur at the branching
points $t_{\sigma }$ of the root branches $\phi _{\sigma }(t)$ with $\left\vert \sigma \right\vert =n$, where $s_{n}(t)$ is lower semicontinuous.
\end{Theorem}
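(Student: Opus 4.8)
The plan is to deduce all four assertions directly from the representation
\[
s_{n}(t)=\sum_{\sigma\in\{+,-\}^{n}}\chi_{(t_{\sigma},2]}(t)=2\sum_{\rho\in\{+,-\}^{n-1}}\chi_{(t_{+,\rho},2]}(t)
\]
of Equation (\ref{charact}), which presents $s_{n}$ as a \emph{finite} sum of indicator functions of half-open intervals that all share the right endpoint $t=2$. By Proposition \ref{Propo5B} every $\phi_{\sigma}$ with $\left\vert\sigma\right\vert=n$ has a single branching point $t_{\sigma}\in[0,2)$, so the set $B_{n}=\{t_{\sigma}:\left\vert\sigma\right\vert=n\}$ is finite --- at most $2^{n}$ points, and at most $2^{n-1}$ distinct ones by the symmetry $t_{-\sigma_{1},\sigma_{2},\dots,\sigma_{n}}=t_{\sigma_{1},\dots,\sigma_{n}}$ already exploited in (\ref{charact}).

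From here the proof is essentially bookkeeping. Each summand $\chi_{(t_{\sigma},2]}$ equals $0$ on $[0,t_{\sigma}]$ and $1$ on $(t_{\sigma},2]$, so on each of the finitely many subintervals into which $B_{n}$ partitions $[0,2]$ the function $s_{n}$ is constant, and it is nondecreasing on $[0,2]$ because every summand is --- this is precisely the monotonicity that feeds into Proposition \ref{Propo3}. The second form of the displayed identity shows that $s_{n}(t)$ is always an even integer; being twice a sum of $2^{n-1}$ indicator values, it satisfies $0\le s_{n}(t)\le 2^{n}$. The value $0$ is attained near $t=0$ because $q_{0}^{n}(x)=-x^{2^{n}}$ has no simple zero (equivalently, $t_{\sigma}\ge 0$ for all $\sigma$), and the value $2^{n}$ is attained at $t=2$ because $t_{\sigma}<2$ forces $2\in(t_{\sigma},2]$ for all $2^{n}$ signatures. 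Hence $s_{n}\colon[0,2]\to\{0,2,4,\dots,2^{n}\}$.

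For the last assertion, observe that $\chi_{(t_{\sigma},2]}$ is continuous on $[0,2]\setminus\{t_{\sigma}\}$ and jumps upward by $1$ at $t_{\sigma}$; since all summands are nondecreasing there is no cancellation, so $s_{n}$ is discontinuous at $t_{0}$ exactly when $t_{0}\in B_{n}$, with jump $\#\{\sigma:\left\vert\sigma\right\vert=n,\ t_{\sigma}=t_{0}\}\ge 2$. Lower semicontinuity then follows because every half-open interval $(t_{\sigma},2]$ is open in $[0,2]$, so each $\chi_{(t_{\sigma},2]}$ is lower semicontinuous and a finite sum of such functions is; concretely, at a branching point one has $\lim_{t\uparrow t_{\sigma}}s_{n}(t)=s_{n}(t_{\sigma})<\lim_{t\downarrow t_{\sigma}}s_{n}(t)$.

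There is no genuine obstacle in this argument --- it is a short exercise once (\ref{charact}) is in hand. The only points worth a careful sentence are (i) the absence of cancellation, which is what upgrades each branching point from a mere candidate to an actual discontinuity, and (ii) pinning down the two endpoint values together with the evenness, which is what justifies the codomain $\{0,2,\dots,2^{n}\}$ rather than a larger set. Both follow from facts already recorded: $t_{\sigma}\in[0,2)$, the monotonicity of each indicator, the pairing of signatures, and $\phi_{\sigma}(0)=0$ for every $\sigma$.
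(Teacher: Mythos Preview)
Your argument is correct and follows exactly the paper's approach: the paper derives Equation~(\ref{charact}) from Propositions~\ref{Propo5B} and~\ref{Propo5C} and then simply states that this equation proves Theorem~\ref{Theo8}, leaving the verification implicit. You have spelled out that verification in full --- the piecewise constancy, monotonicity, evenness, endpoint values, location of jumps, and lower semicontinuity all read off from the finite sum of indicators $\chi_{(t_{\sigma},2]}$ exactly as you describe --- so your write-up is a faithful and somewhat more explicit expansion of the paper's one-line proof.
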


Figure \ref{figure6} shows the function $s_{4}(t)$ based on Figure \ref{figure4}(c). Apply now Proposition \ref{Propo3} to prove Milnor's
Monotonicity Conjecture for the quadratic family:

\begin{Theorem}
\label{Theo9} The topological entropy of $q_{t}$ is a monotonically
increasing function of $t$.
\end{Theorem}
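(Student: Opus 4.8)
The plan is to deduce Theorem \ref{Theo9} directly from Proposition \ref{Propo3} together with the monotonicity of each $s_n$ already established in Theorem \ref{Theo8}. Recall that Proposition \ref{Propo3} states: if $s_n(t_1)\le s_n(t_2)$ for all $n\ge n_0$, then $h(q_{t_1})\le h(q_{t_2})$. So the argument reduces to verifying its hypothesis for every pair $t_1<t_2$ in $[0,2]$, with $n_0=1$.

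First I would fix $t_1,t_2\in[0,2]$ with $t_1<t_2$ and let $n\ge1$ be arbitrary. By Theorem \ref{Theo8}, $s_n:[0,2]\to\{0,2,\dots,2^n\}$ is monotonically increasing, so $s_n(t_1)\le s_n(t_2)$. Concretely, this follows from the closed form (\ref{charact}), $s_n(t)=\sum_{\sigma\in\{+,-\}^n}\chi_{(t_\sigma,2]}(t)$: each indicator $\chi_{(t_\sigma,2]}$ is nondecreasing in $t$, being the characteristic function of an upper half-interval, hence the finite sum is nondecreasing. Since this holds for all $n\ge1$, the hypothesis of Proposition \ref{Propo3} is satisfied with $n_0=1$, and we conclude $h(q_{t_1})\le h(q_{t_2})$. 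As $t_1<t_2$ were arbitrary, $t\mapsto h(q_t)$ is monotonically increasing on $[0,2]$.

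There is essentially no obstacle left: all the work has been front-loaded into Proposition \ref{Propo5} (and its consequence Proposition \ref{Propo5B}), which guarantees that each $\mathbf{sdom}\,\phi_\sigma$ is a single half-interval $(t_\sigma,2]$ rather than a union of several intervals — this is precisely what makes each $\chi_{(t_\sigma,2]}$, and therefore $s_n$, monotone. If one wished to make the proof fully self-contained one could rewrite (\ref{h(q_t)}) as $h(q_t)=\lim_{n\to\infty}\tfrac1n\log\bigl(1+\sum_{k=0}^{n-1}s_k(t)\bigr)$ and note that for $t_1<t_2$ the partial sums satisfy $1+\sum_{k=0}^{n-1}s_k(t_1)\le 1+\sum_{k=0}^{n-1}s_k(t_2)$ termwise, so the logarithms are ordered, and the ordering is preserved in the limit by monotonicity of $\log$ and of limits; but invoking Proposition \ref{Propo3} already packages exactly this. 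The only subtlety worth a remark is that monotonicity of $h(q_t)$ is \emph{non-strict}: on plateaus of the bifurcation diagram (e.g. the period-doubling windows before $t_F$) the entropy is locally constant, consistent with the statement, so no strict inequality should be claimed.
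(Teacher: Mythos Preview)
Your proof is correct and follows exactly the paper's own argument: Theorem \ref{Theo8} gives $s_n(t_1)\le s_n(t_2)$ for all $n\ge1$ whenever $t_1<t_2$, and Proposition \ref{Propo3} then yields $h(q_{t_1})\le h(q_{t_2})$. The additional remarks you make (on Equation (\ref{charact}), on Proposition \ref{Propo5B} doing the real work, and on non-strictness) are accurate and consistent with the surrounding discussion in the paper.
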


Figure \ref{figure7} shows the topological entropy of $q_{t}$ superimposed
on the bifurcation diagram (Figure \ref{figure3}); in particular, $h(q_{t})>0 $ (i.e., $q_{t}$ is chaotic) for $t>t_{F}=1.4011551...$
(Feigenbaum point) and $h(q_{2})=\log 2$, the highest value that the
topological entropy of a unimodal map can take, see Equation (\ref{maxh(f)}). It can be proved that the function $t\mapsto h(q_{t})$ is a Devil's
staircase, meaning that it is continuous, monotonically increasing (Theorem \ref{Theo9}), but there is no interval of parameters where it is strictly
increasing \cite{Graczyk1997,Lyubich1997}. The plateaus where $h(q_{t})$ is
constant correspond to intervals containing a periodic attractor and the
subsequent period doubling cascade (e.g., the period-3 window, clearly
visible in Figure \ref{figure7}). This shows that periodic orbits do not
disappear as $t $ increases, however, the new periodic orbits that are
created do not necessarily increase $h(q_{t})$. The topological entropy in
Figure \ref{figure7} was computed using the general algorithm presented in 
\cite{Amigo2015}, but see \cite{Dilao2012} for a simpler and quicker
algorithm adapted to unimodal maps. The small but positive values of $h(q_{t})$ to the left of $t_{F}$ are due to the slow convergence of the
algorithm.

\begin{figure}[tbp]
\centering
\includegraphics[height=4cm]{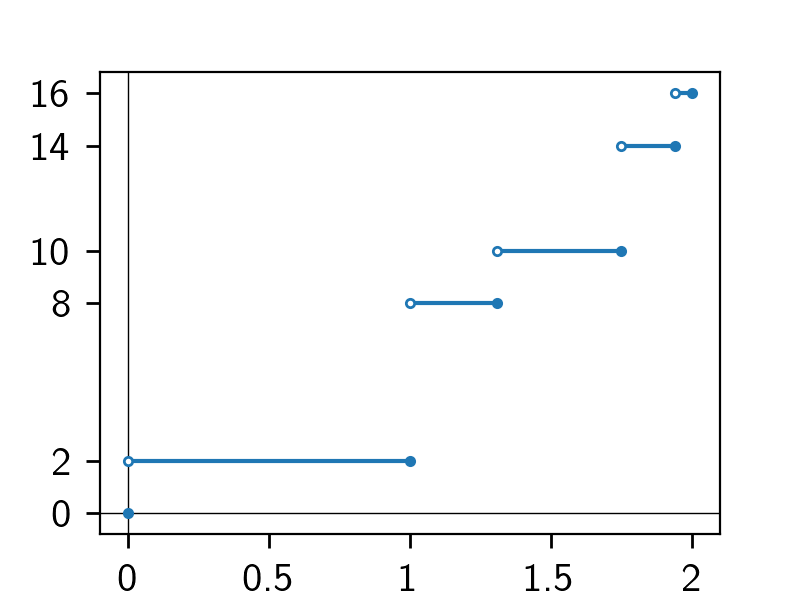}
\caption{The function $s_{4}(t)$ for the quadratic family. Jumps occur at
the branching points $t_{++++}=0$, $t_{+---}=t_{++--}=t_{+++-}=1$, $t_{+-+-}\simeq 1.3107$, $t_{+--+}=t_{++-+}\simeq 1.7549$, and $t_{+-++}\simeq 1.9408$.}
\label{figure6}
\end{figure}

\begin{figure}[tbp]
\centering
\includegraphics[height=4cm]{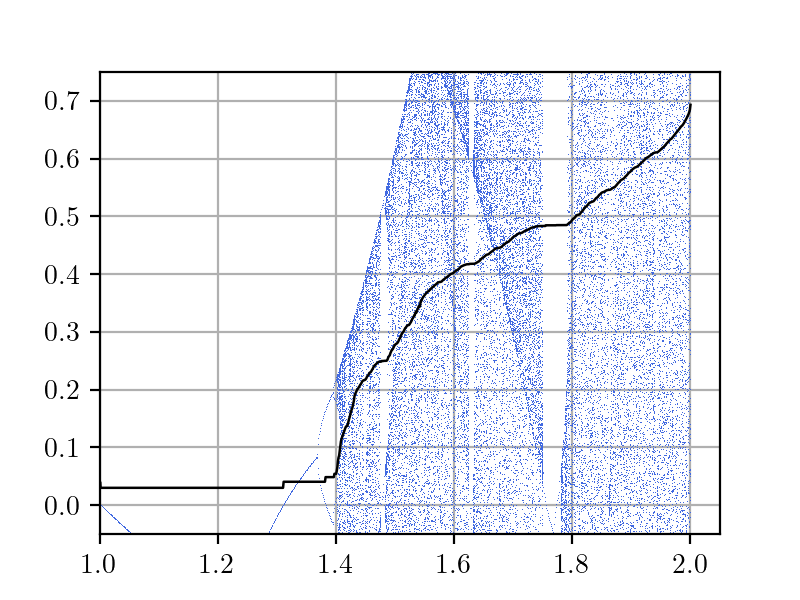}
\caption{Topological entropy of $q_{t}$ using the formula (\protect\ref{h(q_t)}) with the logarithm to the base $e$. The topological entropy was
plotted on the bifurcation diagram for a better understanding of its
characteristics.}
\label{figure7}
\end{figure}

\section{Application II: Superstable period orbits}\label{section5}

In Section \ref{section4} we studied the solutions of the equation $q_{t}^{n}(x)=0$,
where the parameter $t$ was thought to be fixed. In other words, we were
interested in the zeros of a polynomial function of the variable $x$ and,
more particularly, in the values of $t$ for which those zeros were simple.
If we fix $x$ instead, then the solutions of $q_{t}^{n}(x)=0$ are the
parametric values $t$ such that $x$ is an $n$-order preimage of $0$, which
is the critical point of $q_{t}$. If, moreover, $x=0$, then the solutions
are the parametric values $t$ for which the critical value is periodic with
period $n$. As explained at the beginning of Section \ref{section23}, these orbits are
called \textit{superstable} because then the derivative of $q_{t}^{n}$ at
each point of the periodic orbit is 0 (see Equation (\ref{super})). For the
quadratic maps, $q_{t}(0)=t$, so $0$ is not a fixed point for $t>0$.

\begin{Remark}
\label{Remark3}If in Section \ref{section4} our main concern were the transverse
intersections of $q_{t}^{n}(x)$ with the $x$-axis, in this section it will
be the transverse intersection of the bisector with the positive root
branches (if any). In this regard, note that the bisector can intersect a
positive root branch $\phi _{+,\sigma }(t)$ at a regular point $t_{0}$
(i.e., $t_{0}\in (t_{+,\sigma },2]=\mathbf{sdom\,}\phi _{+,\sigma }$) only
once and transversally, from above to below. Otherwise, the root parabola $\phi _{\pm ,-,\sigma }(t)=\pm \sqrt{t-\phi _{+,\sigma }(t)}$ would have
multiple branching points, contradicting Proposition \ref{Propo5B}. Singular
points are isolated or at the boundary of smoothness domains (branching
points), so the concept of transversal intersection do not apply to them.
\end{Remark}

\subsection{Symbolic sequences}\label{section51}

To study the superstable periodic orbits of $q_{t}$, it suffices to consider
symbolic orbits. As an advantage, the results hold also under
order-preserving conjugacies, as happens with $\left. q_{t}\right\vert
_{[-2\mu ,2\mu ]}$ and $\left. f_{\mu }\right\vert _{[0,1]}$ under the
affine transformation (\ref{invphi(x)}). We come back to this point below.

Given a general orbit $(q_{t}^{k}(x_{0}))_{k=0}^{\infty
}=(x_{0},x_{1},...,x_{k},...)$, the corresponding symbolic sequence $\Sigma
=(\Sigma _{0},\Sigma _{1},...,\Sigma _{k},...)$ is defined as follows:\begin{equation*}
\Sigma _{k}=\left\{ 
\begin{array}{cc}
- & \text{if }x_{k}<0 \\ 
C & \text{if }x_{k}=0 \\ 
+ & \text{if }x_{k}>0\end{array}\right.
\end{equation*}for $k\geq 0$. A symbolic sequence that corresponds to an actual orbit of $q_{t}$ for $t=t_{0}$ is called \textit{admissible} (for $t=t_{0}$) and have
to fulfill certain conditions \cite{Hao1998}.

Consider a superstable periodic orbit $(0,x_{1},x_{2},...,x_{p-1})^{\infty }$
of \textit{prime} period $p\geq 2$, so that $x_{k}\neq 0$ for $k=1,...,p-1$.
For the time being, we drop the exponent \textquotedblleft $\infty $\textquotedblright\ and rearrange the cycle as $(x_{1},x_{2},...,x_{p-1},0)$
so that the first point is the critical value (also the greatest value) $q_{t}(0)=t>0$. In this case, $\Sigma _{k}\in \{+,-\}$ for $1\leq k\leq p-1$,
so that we will fittingly use $\sigma $'s instead of $\Sigma $'s and write
the pertaining symbolic sequence as $(+,\sigma _{2},...,\sigma _{p-1},C)$.
Therefore, by writing $\Sigma =(+,\sigma _{2},...,\sigma _{p-1},C)$ we do
not need to specify that $\Sigma $ is a superstable cycle of prime period $p$. The parameter values for which $q_{t}$ has superstable cycles are discrete
because $q_{t}^{n}(0)=0$ is a polynomial equation in $t$ for each $n$ (see
Remark \ref{Remark1}); we will see below that there are infinitely many such
values and that they accumulate at the right endpoint of the parametric
interval, $t_{\max }=2$.

\begin{Proposition}
\label{Propo8} Let $\Sigma =(+,\sigma _{2},...,\sigma _{p-1},C)$ be an
admissible cycle for $t=t_{0}$. Then $t_{0}$ satisfies the equation\begin{equation}
t_{0}=\phi _{+,-\sigma _{2},...,-\sigma _{p-1}}(t_{0}).  \label{Propo8a}
\end{equation}Equivalently,\begin{equation}
\phi _{\pm ,-,-\sigma _{2},...,-\sigma _{p-1}}(t_{0})=0.  \label{Propo8b}
\end{equation}Moreover, $t_{0}$ is a regular point of $\phi _{+,-\sigma _{2},...,-\sigma
_{p-1}}(t_{0})$, therefore $t_{0}$ is the branching point of the on-axis
root parabola $\phi _{\pm ,-,-\sigma _{2},...,-\sigma _{p-1}}(t)$.
\end{Proposition}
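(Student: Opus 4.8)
The plan is to unwind the periodicity condition $q_{t_0}^p(0)=0$ in terms of the root branches introduced in Section~\ref{section31}, exploiting the recursive structure of the $\phi_\sigma$. Since the cycle $\Sigma=(+,\sigma_2,\dots,\sigma_{p-1},C)$ is admissible for $t=t_0$, the actual orbit of $0$ under $q_{t_0}$ is $(0,x_1,\dots,x_{p-1},0,\dots)$ with $x_1=t_0>0$, $x_k=q_{t_0}^{k}(0)$, and $\mathrm{sign}(x_k)=\sigma_k$ for $1\le k\le p-1$. The key identity is that $x_1=t_0$ is obtained from $x_2,\dots,x_{p-1},x_p=0$ by successively \emph{inverting} $q_{t_0}$: from $x_{k+1}=t_0-x_k^2$ we get $x_k=\sigma_k\sqrt{t_0-x_{k+1}}$. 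Iterating this backwards from $x_p=0$ gives
\begin{equation*}
x_1=\sigma_1\sqrt{t_0-x_2}=\sigma_1\sqrt{t_0-\sigma_2\sqrt{t_0-\cdots-\sigma_{p-1}\sqrt{t_0}}},
\end{equation*}
with $\sigma_1=+$. Comparing with the closed form \eqref{phi=x} for the root branches, the right-hand side is exactly $\phi_{+,-\sigma_2,\dots,-\sigma_{p-1}}(t_0)$, because the minus signs in front of each radicand flip the sign labels of all components after the first. This yields \eqref{Propo8a}: $t_0=\phi_{+,-\sigma_2,\dots,-\sigma_{p-1}}(t_0)$.

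First I would verify that the backward recursion is well-defined at $t=t_0$, i.e.\ that each radicand $t_0-x_{k+1}$ is nonnegative: this follows because $x_{k+1}=q_{t_0}(x_k)\le t_0$ always (the critical value $t_0$ is the maximum of $q_{t_0}$), and it is strictly positive for $k+1\le p-1$ since then $x_{k+1}\ne 0$ would force the radicand $>0$, while for the last step the radicand is $t_0-x_p=t_0-0=t_0>0$. Hence $t_0\in\mathbf{dom}\,\phi_{+,-\sigma_2,\dots,-\sigma_{p-1}}$ and \eqref{phi=x} applies verbatim. Next, \eqref{Propo8b} is merely a restatement: writing $\rho=(+,-\sigma_2,\dots,-\sigma_{p-1})$, the branch $\phi_{\pm,-,\rho}(t)=\pm\sqrt{t-\phi_{+,-\sigma_2,\dots,-\sigma_{p-1}}(t)}$ vanishes at $t_0$ exactly when $t_0=\phi_{+,-\sigma_2,\dots,-\sigma_{p-1}}(t_0)$, which is \eqref{Propo8a}; this identifies $t_0$ as a zero of a rank-$p$ root branch, i.e.\ as a point where the on-axis parabola $\phi_{\pm,-,-\sigma_2,\dots,-\sigma_{p-1}}(t)$ touches the $t$-axis.

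The remaining claim is that $t_0$ is a \emph{regular} point of $\phi_{+,-\sigma_2,\dots,-\sigma_{p-1}}(t)$, equivalently $t_0\in\mathbf{sdom}\,\phi_{+,-\sigma_2,\dots,-\sigma_{p-1}}=(t_{+,-\sigma_2,\dots,-\sigma_{p-1}},2]$, which by Propositions~\ref{Propo4} and~\ref{Propo40} means $t_0$ is not itself a branching point of that rank-$(p-1)$ branch, i.e.\ $\phi_{-\sigma_{k+1},\dots,-\sigma_{p-1}}(t_0)\ne 0$ for every $1\le k\le p-2$ and also $\phi_{+,-\sigma_2,\dots,-\sigma_{p-1}}(t_0)\ne 0$. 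Translating back through \eqref{phi=x}, these conditions say precisely that $x_{k}=q_{t_0}^{k}(0)\ne 0$ for $1\le k\le p-1$ — which is guaranteed by the hypothesis that $p$ is the \emph{prime} period, so $0$ is not hit before time $p$. (Concretely: $\phi_{+,-\sigma_2,\dots,-\sigma_{p-1}}(t_0)=x_1=t_0\ne0$, and for $1\le k\le p-2$ one has $\phi_{-\sigma_{k+1},\dots,-\sigma_{p-1}}(t_0)=-x_{k+1}\ne0$ by the same backward-inversion computation applied to the tail.) This is the step requiring the most care, since one must match up the index bookkeeping between the dynamical orbit $(x_k)$ and the nested-radical labels, but it is otherwise routine. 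Finally, once $t_0$ is shown to be a regular point of the rank-$(p-1)$ branch, Proposition~\ref{Propo5B} (uniqueness of the branching point) forces $t_0$ to be \emph{the} branching point of the rank-$p$ on-axis parabola $\phi_{\pm,-,-\sigma_2,\dots,-\sigma_{p-1}}(t)$, completing the proof.
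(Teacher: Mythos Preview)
Your proposal is correct and follows essentially the same approach as the paper: invert the recursion $x_{k+1}=t_0-x_k^2$ to obtain the nested-radical identity \eqref{Propo8a}, deduce \eqref{Propo8b} as an immediate reformulation, and then use primality of the period to rule out singularity of the rank-$(p-1)$ branch at $t_0$. The only cosmetic differences are that you spell out the nonnegativity of the radicands (the paper leaves this implicit) and that for the final ``branching point'' claim you invoke Proposition~\ref{Propo5B} directly, whereas the paper passes through Remark~\ref{Remark3}; since that remark is itself derived from Proposition~\ref{Propo5B}, the two routes are equivalent.
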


\begin{proof}
From (i) $x_{1}=q_{t_{0}}(0)=t_{0}$, (ii) $x_{k+1}=q_{t_{0}}(x_{k})=t_{0}-x_{k}^{2}$, i.e.,\begin{equation*}
x_{k}=\sigma _{k}\sqrt{t_{0}-x_{k+1}}\text{\ \ for\ }k=1,2,...,p-1,
\end{equation*}and (iii) $x_{p}=0$, we obtain 
\begin{equation*}
t_{0}=\sqrt{t_{0}-x_{2}}=\sqrt{t_{0}-\sigma _{2}\sqrt{t_{0}-x_{3}}}=...=\sqrt{t_{0}-\sigma _{2}\sqrt{t_{0}-...-\sigma _{p-1}\sqrt{t_{0}}}}=\phi
_{+,-\sigma _{2},...,-\sigma _{p-1}}(t_{0}).
\end{equation*}Alternatively,\begin{equation*}
t_{0}=\phi _{+,-\sigma _{2},...,-\sigma _{p-1}}(t_{0})\;\Leftrightarrow
\;\;\pm \sqrt{t_{0}-\phi _{+,-\sigma _{2},...,-\sigma _{p-1}}(t_{0})}\equiv
\phi _{\pm ,-,-\sigma _{2},...,-\sigma _{p-1}}(t_{0})=0.
\end{equation*}

Moreover, if $t_{0}$ is a singular point of $\phi _{+,-\sigma
_{2},...,-\sigma _{p-1}}(t_{0})$, then by Equation (\ref{C''c}) and $t_{0}>0$, it holds $\phi _{-\sigma _{k+1},...,-\sigma _{p-1}}(t_{0})=0$ for some $1\leq k\leq p-2$, so that\begin{equation*}
t_{0}=\phi _{+,-\sigma _{2},...,-\sigma _{p-1}}(t_{0})=\phi _{+,-\sigma
_{2},...,-\sigma _{k}}(t_{0}).
\end{equation*}It follows,\begin{equation*}
x_{k+1}=q_{t_{0}}^{k+1}(0)=q_{t_{0}}^{k}(t_{0})=q_{t_{0}}^{k}(\phi
_{+,-\sigma _{2},...,-\sigma _{k}}(t_{0}))=0
\end{equation*}by definition of root branches of rank $k$, which contradicts that $\Sigma
_{k+1}\neq C$ ($k+1\leq p-1$).

As explained in Remark \ref{Remark3}, root branches have at regular points transversal
intersections (if any) with the bisector. This implies that the branches of
the root parabola $\phi _{\pm ,-,-\sigma _{2},...,-\sigma _{p-1}}(t)$ are
defined in a neighborhood of $t_{0}$, therefore $t_{0}$ is a branching point
of $\phi _{\pm ,-,-\sigma _{2},...,-\sigma _{p-1}}(t)$.
\end{proof}

By the same token, if there exists no solution of the equation $t=\phi
_{+,-\sigma _{2},...,-\sigma _{p-1}}(t)$, then the cycle $(+,\sigma
_{2},...,\sigma _{p-1},C)$ is not admissible. So, root branches and their
bifurcation points, Equations (\ref{Propo8a}) and (\ref{Propo8b}), pop up as
soon as one learns about superstable cycles. Next we prove the reverse
implication of Proposition \ref{Propo8}.

\begin{Proposition}
\label{Propo9} If the bisector intersects transversally the root branch $\phi _{+,\sigma _{2},...,\sigma _{p-1}}(t)$ at $t_{0}$, then $(+,-\sigma
_{2},...,-\sigma _{p-1},C)$ is an admissible cycle for $t=t_{0}$.
\end{Proposition}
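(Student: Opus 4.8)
The plan is to reconstruct the orbit of the critical point directly from the fixed-point relation that a transversal bisector crossing encodes, and then read off its itinerary. Write $x_0=0$ and $x_k=q_{t_0}^{k}(0)$. By Remark~\ref{Remark3}, a transversal intersection of the bisector with a root branch can only occur at a regular point, so the hypothesis means $t_0=\phi_{+,\sigma_2,\ldots,\sigma_{p-1}}(t_0)$ with $t_0\in(t_{+,\sigma_2,\ldots,\sigma_{p-1}},2]=\mathbf{sdom}\,\phi_{+,\sigma_2,\ldots,\sigma_{p-1}}$; in particular $t_0>0$. (For $p=2$ this is simply $\sqrt{t_0}=t_0$, i.e.\ $t_0=1$ and the cycle $(0,1)$, so assume $p\ge 3$ below.)

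First I would note $x_1=q_{t_0}(0)=t_0>0$ and then, squaring $t_0=\sqrt{t_0+\phi_{\sigma_2,\ldots,\sigma_{p-1}}(t_0)}$ from Equation~(\ref{phi=x}), obtain $x_2=t_0-t_0^{2}=-\phi_{\sigma_2,\ldots,\sigma_{p-1}}(t_0)$. An induction on $k$ then gives
\begin{equation*}
x_k=-\phi_{\sigma_k,\ldots,\sigma_{p-1}}(t_0)=\phi_{-\sigma_k,\sigma_{k+1},\ldots,\sigma_{p-1}}(t_0)\qquad(2\le k\le p-1),
\end{equation*}
each inductive step being a single application of the identity $q_t(\phi_{\rho_1,\ldots,\rho_m}(t))=-\phi_{\rho_2,\ldots,\rho_m}(t)$ (Equation~(\ref{Ca})) to $\rho=(-\sigma_k,\sigma_{k+1},\ldots,\sigma_{p-1})$. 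In particular $x_{p-1}=-\sigma_{p-1}\sqrt{t_0}$, whence $x_p=q_{t_0}(x_{p-1})=t_0-t_0=0$, so the orbit of $0$ closes after $p$ steps; by (\ref{phi<2}) and Remark~\ref{Remark2} every $x_k$ lies in $\mathring I_{t_0}$, so this is a genuine orbit of $q_{t_0}$ on $I_{t_0}$.

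It remains to check that the prime period equals $p$ and that the itinerary of $(x_1,\ldots,x_{p-1},x_p)$ is $(+,-\sigma_2,\ldots,-\sigma_{p-1},C)$. Here I would use Proposition~\ref{Propo4}: since $t_0>t_{+,\sigma_2,\ldots,\sigma_{p-1}}=\max\{t\in[0,2):\phi_{\sigma_{k+1},\ldots,\sigma_{p-1}}(t)=0\text{ for some }0\le k\le p-2\}$, none of the tails $\phi_{\sigma_j,\ldots,\sigma_{p-1}}$ with $2\le j\le p-1$ vanishes at $t_0$; hence $x_j\ne 0$ for $2\le j\le p-1$, and since also $x_1=t_0>0$, the prime period is exactly $p$. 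Moreover, by (\ref{phi=x}) a nonvanishing root branch $\phi_{\rho_1,\ldots}(t_0)$ has the sign of $\rho_1$, so $x_k$ has sign $-\sigma_k$ for $2\le k\le p-1$, while $x_1>0$ and $x_p=0$. Thus the symbolic sequence realized by this cycle is exactly $(+,-\sigma_2,\ldots,-\sigma_{p-1},C)$, which is therefore admissible for $t=t_0$.

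The algebra above is routine; the one delicate point is the primality of $p$, which is exactly why the proof leans on the regular-point hypothesis (equivalently, on Proposition~\ref{Propo5B} and Remark~\ref{Remark3}) rather than on the bare equation $t_0=\phi_{+,\sigma_2,\ldots,\sigma_{p-1}}(t_0)$ alone: without regularity one could only conclude that the true prime period divides $p$.
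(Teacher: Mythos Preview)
Your proof is correct and follows essentially the same route as the paper's: compute the iterates $q_{t_0}^k(0)$ via Equation~(\ref{Ca}) to obtain $x_k=\phi_{-\sigma_k,\sigma_{k+1},\ldots,\sigma_{p-1}}(t_0)$, close the orbit at step $p$, and use regularity of $t_0$ to ensure the intermediate iterates are nonzero so that the symbols are $-\sigma_k\in\{+,-\}$. Your version is slightly more explicit than the paper's in that you separate the case $p=2$ and spell out that the prime period is exactly $p$ (invoking Proposition~\ref{Propo4} directly rather than just Remark~\ref{Remark3}), but the underlying argument is identical.
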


\begin{proof}
Suppose $q_{t_{0}}(0)=t_{0}=\phi _{+,\sigma _{2},...,\sigma _{p-1}}(t_{0})$.
Then, similarly to Equations (\ref{Ca})-(\ref{Cb}),\begin{eqnarray*}
q_{t_{0}}^{2}(0) &=&q_{t_{0}}(\phi _{+,\sigma _{2},...,\sigma
_{p-1}}(t_{0}))=-\phi _{\sigma _{2},...,\sigma _{p-1}}(t_{0})=\phi _{-\sigma
_{2},\sigma _{3},...,\sigma _{p-1}}(t_{0}),... \\
q_{t_{0}}^{k}(0) &=&q_{t_{0}}(\phi _{-\sigma _{k-1},\sigma _{k},...,\sigma
_{p-1}}(t_{0}))=-\phi _{\sigma _{k},...,\sigma _{p-1}}(t_{0})=\phi _{-\sigma
_{k},\sigma _{k+1},...,\sigma _{p-1}}(t_{0})
\end{eqnarray*}for $k=3,...,p-1$, and\begin{equation*}
q_{t_{0}}^{p}(0)=q_{t_{0}}(\phi _{-\sigma _{p-1}}(t_{0}))=t_{0}-\phi
_{-\sigma _{p-1}}(t_{0})^{2}=t_{0}-(-\sigma _{p-1}\sqrt{t_{0}})^{2}=0.
\end{equation*}It follows that the points\begin{equation*}
(\phi _{+,\sigma _{2},...,\sigma _{p-1}}(t_{0}),\;\phi _{-\sigma _{2},\sigma
_{3},...,\sigma _{p-1}}(t_{0}),\;\phi _{-\sigma _{3},\sigma _{4},...,\sigma
_{p-1}}(t_{0}),\;...,\;\phi _{-\sigma _{p-1}}(t_{0}),\;0)
\end{equation*}build a periodic orbit. Its symbolic sequence $\Sigma =(+,\Sigma
_{2},...,\Sigma _{p-1},C)$ is determined by the signs of $\phi _{-\sigma
_{k},\sigma _{k+1},...,\sigma _{p-1}}(t_{0})$ for $2\leq k\leq p-1$. Since $t_{0}$ is necessarily a regular point of $\phi _{+,\sigma _{2},...,\sigma
_{p-1}}(t)$ (Remark \ref{Remark3}), it holds $\phi _{-\sigma _{k},\sigma
_{k+1},...,\sigma _{p-1}}(t_{0})\neq 0$ for $2\leq k\leq p-1$, and hence $\Sigma _{k}=-\sigma _{k}\in \{+,-\}.$
\end{proof}

Table \ref{table2} lists the superstable cycles of $q_{t}$ of prime periods $p=1,2,...6$
(cycles written in abridged notation). The cycles of periods 2 and 3 are due
to the transverse intersections of the bisector with $\phi _{+}(t)=\sqrt{t}$
at $t=1$ and with $\phi _{++}(t)=\sqrt{t+\sqrt{t}}$ at $t\simeq 1.7549$.
Figure \ref{figure8} illustrates where the superstable cycles of prime
periods $4$ and $5 $ come from.

\begin{table}[H]
	\caption{Superstable cycles of periods 1 to 6.}
	\label{table2}
	\centering
	%% \tablesize{} %% You can specify the fontsize here, e.g., \tablesize{\footnotesize}. If commented out \small will be used.
	\begin{tabular}{cc}
		\toprule
		\textbf{Period} & \textbf{Superstable cycles}\\
		\midrule
		1 & $C$ \\ \hline
		2 & $+C$ \\ \hline
		3 & $+-C$ \\ \hline
		4 & $+-+C,\;+\{-\}^{2}C$ \\ \hline
		5 & $+-\{+\}^{2}C,$ $+\{-\}^{2}+C,$ $+\{-\}^{3}C$ \\ \hline
		6 & $+-\{+\}^{3}C,\;+\{-\}^{2}+-C,\;+\{-\}^{2}\{+\}^{2}C,\;+\{-\}^{3}+C,\;+\{-\}^{4}C$ \\
		\bottomrule
	\end{tabular}
\end{table}

\begin{figure}[tbp]
\centering
\includegraphics[height=3cm]{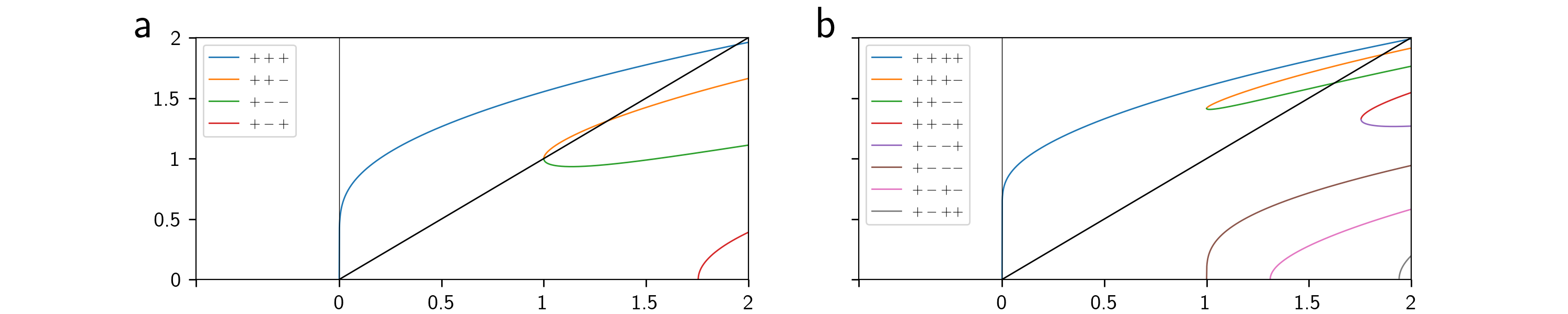}
\caption{(a) Non-transverse intersections (\textquotedblleft
T-crossings\textquotedblright ) of the bisector with $\protect\phi _{++-}(t)$
and $\protect\phi _{+--}(t)$ at $t=1$, and transverse intersections of the
bisector with (bottom to top) $\protect\phi _{++-}(t)$ at $t\simeq 1.3107$
and $\protect\phi _{+++}(t)$ at $t\simeq 1.9408$. Proposition \protect\ref{Propo9} entails that $q_{1.3107...}$ has the cycle $(+,-,+,C)$ and $q_{1.9408...}$ the cycle $(+,-,-,C)$, whereas $q_{1}$ has the cycle $(+,C)$.
(b) Regular intersections of the bisector with (bottom to top) $\protect\phi _{++--}(t)$ at $t\simeq 1.6254$, $\protect\phi _{+++-}(t)$ at $t\simeq
1.8607 $, and $\protect\phi _{++++}(t)$ at $t\simeq 1.9854$. Proposition 
\protect\ref{Propo9} entails that $q_{1.6254...}$, $q_{1.8607...}$ and $q_{1.9854...} $ have the cycles $(+,-,+,+,C)$, $(+,-,-,+,C)$, and $(+,-,-,-,C) $, respectively.}
\label{figure8}
\end{figure}

\begin{Proposition}
\label{Propo10} The quadratic family has superstable cycles of arbitrary
length.
\end{Proposition}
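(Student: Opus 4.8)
The plan is to realize, for every $p\ge 2$, the symbolic sequence $(+,\{-\}^{p-2},C)$ as an admissible superstable cycle of prime period $p$ at a suitable parameter $t_0=t_0(p)\in(0,2)$; the case $p=1$ is trivial, as $q_0(0)=0$. By Proposition~\ref{Propo9}, it suffices to show that the bisector $y=t$ meets the root branch $\phi_{\{+\}^{p-1}}(t)=\sqrt{t+\sqrt{t+\cdots+\sqrt{t}}}$ ($p-1$ nested radicals) transversally at a regular point $t_0$, for then Proposition~\ref{Propo9} applied with $\sigma_2=\dots=\sigma_{p-1}=+$ yields exactly the cycle $(+,\{-\}^{p-2},C)$ at $t=t_0$.

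First I would record the properties of this branch. By Equation~(\ref{full dom}) it is defined on all of $[0,2]$, and by Proposition~\ref{Propo5B} (see also Table~\ref{table1}) its unique branching point is $t=0$, so $\mathbf{sdom}\,\phi_{\{+\}^{p-1}}=(0,2]$ and every $t\in(0,2)$ is a regular point. Next I would compare $\phi_{\{+\}^{p-1}}(t)$ with $t$: since the inner nested radical is nonnegative, $\phi_{\{+\}^{p-1}}(t)\ge\sqrt{t}>t$ for $0<t<1$, while $\phi_{\{+\}^{p-1}}(2)<2$ by Equation~(\ref{phi<2}) (the closed form~(\ref{x_sigma(2)}) in fact gives the value $2\cos(\pi/2^{p})$). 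Hence the continuous function $g_p(t)=\phi_{\{+\}^{p-1}}(t)-t$ is positive on $(0,1)$ and negative at $t=2$, so by the Intermediate Value Theorem $g_p(t_0)=0$ for some $t_0\in(0,2)$; that is, the bisector meets $\phi_{\{+\}^{p-1}}$ at $t_0$. As $t_0\in(0,2)\subset\mathbf{sdom}\,\phi_{\{+\}^{p-1}}$, it is a regular point, so by Remark~\ref{Remark3} the intersection is transverse. Proposition~\ref{Propo9} then gives that $(+,\{-\}^{p-2},C)$ is admissible for $t=t_0$, and since the proof of that proposition exhibits intermediate symbols $\Sigma_k=-\sigma_k\in\{+,-\}$ for $2\le k\le p-1$ (none equal to $C$), the prime period is exactly $p$. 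As $p\ge 2$ was arbitrary, $q_t$ has superstable cycles of every length.

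All of this is short because the real work was done earlier; the only point deserving care is ensuring the crossing is transverse and occurs at a \emph{regular} point, so that Proposition~\ref{Propo9} applies and the period cannot drop below $p$. Both hold automatically: $t_0$ lands in the interior of $\mathbf{sdom}\,\phi_{\{+\}^{p-1}}=(0,2]$, and at a regular point Remark~\ref{Remark3} rules out a tangential meeting of the bisector with a positive root branch. The two quantitative inputs, $\phi_{\{+\}^{p-1}}(t)\ge\sqrt{t}$ and $\phi_{\{+\}^{p-1}}(2)<2$, are immediate from Equations~(\ref{phi=x}) and~(\ref{phi<2}).
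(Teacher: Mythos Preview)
Your proof is correct and follows essentially the same route as the paper's: both arguments pick the branch $\phi_{\{+\}^{p-1}}$, use that its smoothness domain is $(0,2]$, compare $\phi_{\{+\}^{p-1}}(t)>t$ for small $t$ with $\phi_{\{+\}^{p-1}}(2)<2$, and conclude via a transverse intersection with the bisector that the cycle $(+,\{-\}^{p-2},C)$ is realized. The paper adds two extras you omit (uniqueness of the crossing from $\cap$-convexity and the strict monotone convergence $t_n^\ast\to 2$), but neither is needed for the proposition as stated, and conversely you make the prime-period step explicit where the paper leaves it implicit in the conventions preceding Proposition~\ref{Propo8}.
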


\begin{proof}
First, $\mathbf{sdom\,}\phi _{\{+\}^{n}}=(0,2]$ for all $n\geq 1$, so all $t>0$ are regular points of $\phi _{\{+\}^{n}}(t)$. Second, the bisector and $\phi _{\{+\}^{n}}(t)$ always intersect transversally at a single point $t_{n}^{\ast }\in \lbrack 1,2)$ because (i) $\phi _{\{+\}^{n}}(t)>t$ for $0<t<1$, (ii) $\phi _{\{+\}^{n}}(2)<\phi _{\{+\}^{\infty }}(2)=2$ (see
Equation (\ref{upper envelope})) and (iii) $\phi _{\{+\}^{n}}(t)$ is $\cap $-convex. Moreover, $t_{n}^{\ast }\rightarrow 2$ strictly monotonically as $n\rightarrow \infty $ because $\phi _{\{+\}^{n}}(t)<\phi _{\{+\}^{n+1}}(t)$
for all $n\geq 1$. The point $t_{n}^{\ast }$ is the branching point of the
root parabola $\phi _{\pm ,-,\{+\}^{n-1}}(t)$, i.e., $t_{n}^{\ast }=$ $t_{\pm ,-,\{+\}^{n-1}}$ in the notation of Sections \ref{section3} and \ref{section4}.
\end{proof}

According to Proposition \ref{Propo8}, Equation (\ref{Propo8b}), the
parametric values of the superstable cycles of prime period $p\geq 2$ are
the branching points of certain on-axis parabolas of the form $\phi _{\pm
,-,\alpha }(t)$ with $\left\vert \alpha \right\vert =p-2$. These parabolas
originate precisely from the transversal intersections of the bisector with
the root branches $\phi _{+,\alpha }(t)$ (if any). As in Sections \ref{section3} and \ref{section4}, those branching points are denoted by $t_{\pm
,-,\alpha }$ ($=t_{+,-,\alpha }=t_{-,-,\alpha }$). Therefore, the parameters
of the superstable cycles can be ordered using the general ordering of the
root branches, Proposition \ref{Propo12}; alternatively, $t_{\pm ,-,\alpha
}<t_{\pm ,-,\beta }$ if and only if $\phi _{+,-,\alpha }(2)>\phi _{+,-,\beta
}(2)$. See Figure \ref{figure9} for the on-axis parabolas of ranks 2-5; in
case of equal branching points (e.g., $\phi _{\pm -}(t)$ and $\phi _{\pm
---}(t)$), only the lowest rank is shown because it corresponds to the prime
period. The branching points are ordered as follows: 
\begin{equation}
t_{\pm -}<t_{\pm -+-}<t_{\pm -+--}<t_{\pm -+}<t_{\pm -++-}<t_{\pm
-++}<t_{\pm -+++}  \label{t*}
\end{equation}corresponding, respectively, to the superstable cycles 
\begin{equation*}
+C,\;\;+-+C,\;\;+-\{+\}^{2}C,\;\;+-C,\;+\{-\}^{2}+C,\;\;+\{-\}^{2}C,\;\;+\{-\}^{3}C
\end{equation*}listed in Table \ref{table2} for prime periods 2-5. As shown in the proof of
Proposition \ref{Propo10}, the points $t_{\pm ,\alpha }$ (ordered as in
Proposition \ref{Propo12}) converge to $2$ as $\left\vert \alpha \right\vert \rightarrow \infty $.

\begin{figure}[tbp]
\centering
\includegraphics[height=4cm]{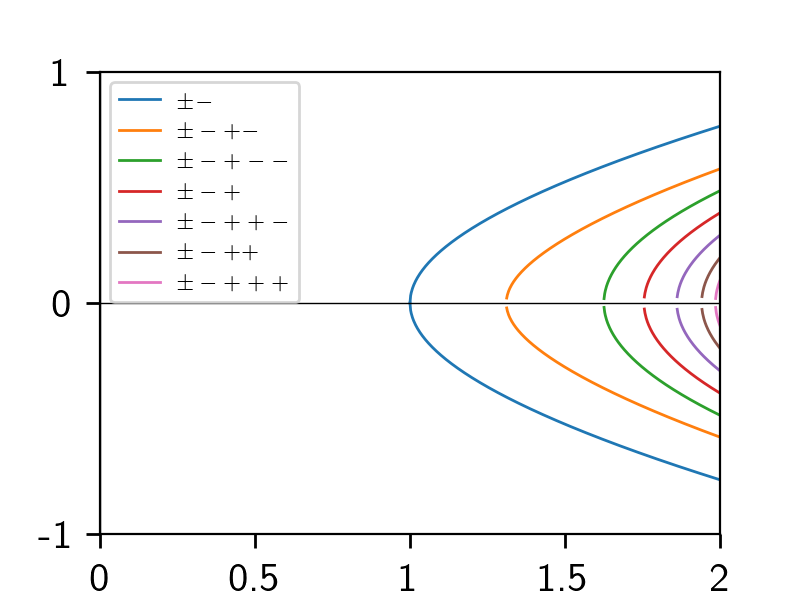}
\caption{On-axis root parabolas of ranks 2-5. In case of \ parabolas with
coinciding branching points, only the parabola with the lowest rank
(corresponding to the prime period of the superstable cycle) is shown. The
branching points (see Equation (\protect\ref{t*})) are: $t_{\pm -}=1$, $t_{\pm -+-}\simeq 1.3107$, $t_{\pm -+--}\simeq 1.6254$, $t_{\pm -+}\simeq
1.7549$, $t_{\pm -++-}\simeq 1.8607$, $t_{\pm -++}\simeq 1.9408$, and $t_{\pm -+++}\simeq 1.9854$. The first four parameter values (periods 2, 4, 5
and 3) are clearly visible in the bifurcation diagram, Figure 3, at the
intersection of periodic attractors with the axis $x=0$.}
\label{figure9}
\end{figure}

The superstable cycles of the quadratic family (and other three parametric
families of transformations of the interval) were numerically studied in 
\cite{Metropolis1973}. According to this paper, the number of superstable
cycles of the quadratic family is given in the following table.

\begin{table}[H]
	\caption{Number of superstable cycles of the quadratic family.}
	\label{table3}
	\centering
	\begin{tabular}{lcccccccccccccc}
	%\begin{tabular}{|l||c|c|c|c|c|c|c|c|c|c|c|c|c|c|}
	\toprule
	\textbf{Period} & 2 & 3 & 4 & 5 & 6 & 7 & 8 & 9 & 10 & 11 & 12 & 13 & 14 & 15 \\ \hline
	\textbf{\# sup. cycles} & 1 & 1 & 2 & 3 & 5 & 9 & 16 & 28 & 51 & 93 & 170 & 
	315 & 585 & 1091 \\ 
	\bottomrule
	\end{tabular} 
\end{table}

The parameter $t$ of a superstable cycle $(+,\sigma _{2},...,\sigma _{n},C)$
can be numerically computed by means of the computational loop\begin{equation*}
t_{\nu +1}=\phi _{+,-\sigma _{2},...,-\sigma _{n}}(t_{\nu }),
\end{equation*}$\nu =0,1,...$ until (i) $\left\vert t_{\nu +1}-\phi _{+,-\sigma
_{2},...,-\sigma _{n}}(t_{\nu })\right\vert <\varepsilon $, where $\varepsilon >0$ is the desired precision, or (ii) a prefixed maximum number
of loops $\nu _{\max }$ is reached, flagging that the convergence $t_{\nu
}\rightarrow t_{+,-\sigma _{2},...,-\sigma _{n}}$ is too slow.

\begin{Theorem}
\label{Theo11} (a) A symbolic cycle $\Sigma =(+,\sigma _{2},...,\sigma
_{s-1},C)$ can be admissible only for one value of $t$. (b) If $\Sigma $ is
admissible for $t=t_{1}$ and $\Sigma ^{\prime }=(+,\rho _{2},...,\rho
_{r-1},C)\neq \Sigma $ is admissible for $t=t_{2}$, then $t_{1}\neq t_{2}$.
\end{Theorem}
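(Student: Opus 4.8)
The plan is to deduce both statements directly from Proposition~\ref{Propo8}, Proposition~\ref{Propo4}, and Remark~\ref{Remark3}; no new tools are required. Throughout, to a cycle $\Sigma=(+,\sigma_2,\dots,\sigma_{s-1},C)$ I attach the signature $\sigma:=(+,-\sigma_2,\dots,-\sigma_{s-1})$ of rank $s-1$. By Proposition~\ref{Propo8}, $\Sigma$ is admissible for $t=t_0$ precisely when $t_0$ is a \emph{regular} point of the positive root branch $\phi_\sigma$ with $\phi_\sigma(t_0)=t_0$; geometrically, the bisector $y=t$ meets $\phi_\sigma$ transversally at the regular point $t_0$. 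The period-$1$ cycle $\Sigma=(C)$ is outside the stated form and is trivial anyway: it corresponds to $q_t(0)=0$, i.e. to the single value $t=0$.

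For part (a) I would argue as follows. If $\Sigma$ were admissible both for $t=t_1$ and for $t=t_2$, then by the reformulation above $t_1$ and $t_2$ would both be regular points of \emph{one and the same} positive root branch $\phi_\sigma$ at which $\phi_\sigma(t)=t$; that is, the bisector would meet $\phi_\sigma$ transversally at two regular points. Remark~\ref{Remark3} forbids this: the bisector meets a positive root branch at a regular point at most once (otherwise the associated on-axis parabola would have more than one branching point, contradicting Proposition~\ref{Propo5B}). Hence $t_1=t_2$.

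For part (b) I would suppose, for contradiction, that $t_1=t_2=:t_0$, and let $\rho:=(+,-\rho_2,\dots,-\rho_{r-1})$ be the signature attached to $\Sigma'$. Applying Proposition~\ref{Propo8} to each cycle, $t_0$ is a regular point of both $\phi_\sigma$ and $\phi_\rho$, so $t_0\in\mathbf{sdom\,}\phi_\sigma\cap\mathbf{sdom\,}\phi_\rho$ and $\phi_\sigma(t_0)=t_0=\phi_\rho(t_0)$. Next I would check that $\sigma\neq\rho$ as signatures: if $s\neq r$ they have different ranks, while if $s=r$ then $\Sigma\neq\Sigma'$ forces $(\sigma_2,\dots,\sigma_{s-1})\neq(\rho_2,\dots,\rho_{r-1})$, hence $\sigma\neq\rho$ after negating the components. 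But Proposition~\ref{Propo4} (root branches neither cross nor touch in their smoothness domains) then yields $\phi_\sigma(t_0)\neq\phi_\rho(t_0)$, contradicting the equality just obtained. Therefore $t_1\neq t_2$.

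In short, the theorem is a repackaging of facts already in hand, and I do not expect a genuine obstacle. The one point that needs care is ensuring that the parameter values in play are \emph{regular} points of the relevant root branches, so that Remark~\ref{Remark3} and Proposition~\ref{Propo4} are applicable; this is supplied verbatim by the final assertion of Proposition~\ref{Propo8}. The only other bookkeeping is the case split $s=r$ versus $s\neq r$ used to conclude $\sigma\neq\rho$ in part (b).
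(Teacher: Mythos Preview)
Your proposal is correct and follows essentially the same route as the paper. For part (a) the paper invokes Proposition~\ref{Propo8} (the branching-point formulation~(\ref{Propo8b})) together with Proposition~\ref{Propo5B} directly, whereas you go through Remark~\ref{Remark3}, which packages the same argument; for part (b) the paper cites Proposition~\ref{Propo12} rather than Proposition~\ref{Propo4}, but only uses the consequence that distinct root branches cannot coincide at a common regular point, which is exactly the last clause of Proposition~\ref{Propo4}.
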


\begin{proof}
(a) Suppose $(+,\sigma _{2},...,\sigma _{s-1},C)$ is admissible for two
different parametric values $t_{1}$ and $t_{2}$. By Proposition \ref{Propo8}, $t_{1}$ and $t_{2}$ are then two branching points of $\phi _{+,-,-\sigma
_{2},...,-\sigma _{s-1}}(t)$, which contradicts Proposition \ref{Propo5B}.

(b) Suppose by contradiction that $t_{1}=t_{2}=t_{0}$. By Proposition \ref{Propo8}, 
\begin{equation*}
t_{0}=\phi _{+,-\sigma _{2},...,-\sigma _{s-1}}(t_{0})=\phi _{+,-\rho
_{2},...,-\rho _{r-1}}(t_{0}),
\end{equation*}where $t_{0}$ is a regular point. On the other hand, according to
Proposition \ref{Propo12}, root functions can coincide only at singular
points.
\end{proof}

\begin{Remark}
\label{RemarkFinal}The main ingredient in the proof of Theorem \ref{Theo11}
is the fact that $\mathbf{sdom\,}\phi _{\sigma }$ is a half-interval $(t_{\sigma },2]$ (Proposition \ref{Propo5B}), from which Milnor's
Monotonicity Conjecture (Theorem \ref{Theo9}) was derived. Reciprocally,
from Theorem \ref{Theo11} it follows that the bisector can transversally
intersect a positive root branch $\phi _{+,\sigma }(t)$ only once. In turn,
it recursively follows from this that the simplicity domains of the root
branches are half-intervals and, hence, Milnor's Monotonicity Conjecture.
\end{Remark}

Two maps of the interval $f(x)$ and $g(y)$ are called \textit{combinatorially equivalent} if they are conjugate via an order-preserving
transformation $\varphi (x)$. For instance, $q_{t}(x)$ and $cq_{1/c}(y)=1-cy^{2}$ are combinatorially equivalent via $\varphi (x)=\frac{1}{t}x$ and $c=t$, whereas $q_{t}(x)$ and $-q_{-c}(y)=c+y^{2}$ are conjugate
via $\varphi (x)=-x$ and $c=-t$, but they are not combinatorially equivalent
because $\varphi (x)$ reverses the order in this case. It is plain that
combinatorially equivalent maps have the same symbolic sequences for
corresponding initial conditions $x_{0}$ and $\varphi (x_{0})$.

\begin{Theorem}
\emph{(Thurston's Rigidity \cite{Bruin2015})} \label{TheoRigidity}Consider $q_{t_{1}}$ and $q_{t_{2}}$ for which their critical points $c=0$ have finite
orbits $\mathcal{O}$ and $\mathcal{O}^{\prime }$. If $q_{t_{1}}$ and $q_{t_{2}}$ are combinatorially equivalent, then $t_{1}=t_{2}$.
\end{Theorem}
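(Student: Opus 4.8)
The plan is to reduce the statement to the critical itinerary and then dispatch the two possible shapes of a finite critical orbit separately, in both cases leaning on the structure of the root branches from Sections~\ref{section31}--\ref{section51}. First, an order-preserving conjugacy between $q_{t_1}$ and $q_{t_2}$ must carry the unique turning point to the unique turning point, so it fixes $0$ and preserves the sign of every iterate; hence $q_{t_1}$ and $q_{t_2}$ have literally the same symbolic sequence $\Sigma=(\Sigma_0,\Sigma_1,\dots)$ for the orbit of $c=0$. Finiteness of $\mathcal O$ and $\mathcal O'$ makes $\Sigma$ eventually periodic, and whether $0$ lies on its periodic part (equivalently, whether $C$ recurs in $\Sigma$) is visible in $\Sigma$, so the same case below applies to $t_1$ and to $t_2$.

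If $0$ is periodic, with prime period $p$, then $\Sigma$ is the infinite repetition of a superstable cycle $(+,\sigma_2,\dots,\sigma_{p-1},C)$, admissible both for $t=t_1$ and for $t=t_2$. By Theorem~\ref{Theo11}(a) a symbolic cycle is admissible for at most one parameter, so $t_1=t_2$. (Equivalently, by Proposition~\ref{Propo8} both $t_1$ and $t_2$ would be branching points of the single on-axis parabola $\phi_{\pm,-,-\sigma_2,\dots,-\sigma_{p-1}}(t)$, contradicting Proposition~\ref{Propo5B}.)

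The substantive case is when $0$ is strictly preperiodic (a Misiurewicz point), with preperiod $m$ and period $p$, both read off from $\Sigma$. Writing $x_k=q_t^k(0)=P_k(t)$ one checks from the itinerary that $m\ge 2$ (if $m=1$ then $q_t^p(0)=0$, forcing $0$ periodic), that $x_m$ is the first periodic point, and that $x_{m-1},x_{m+p-1}$ are the two distinct $q_t$-preimages of $x_m$, whence $x_{m-1}=-x_{m+p-1}$. Peeling radicals exactly as in the proof of Proposition~\ref{Propo8}, but now folding the periodic tail back on itself, the periodic point $x_m(t)$ is the branch of the fixed-point relation $x_m=\sigma_m\sqrt{t-\sigma_{m+1}\sqrt{t-\dots-\sigma_{m+p-1}\sqrt{t-x_m}}}$ selected by the prescribed signs, and the transient then forces
\[
t=x_1=\sqrt{t-\sigma_2\sqrt{t-\dots-\sigma_{m-1}\sqrt{t-x_m(t)}}}=:\Phi_\Sigma(t).
\]
Thus the Misiurewicz parameter is a fixed point of the ``eventually-periodic root branch'' $\Phi_\Sigma$, the evident analogue of the finite continued radical $\phi_{+,-\sigma_2,\dots}$ of Proposition~\ref{Propo8}. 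I would then show that, on the parametric subinterval where all the prescribed sign conditions hold, $\Phi_\Sigma$ and the auxiliary branch $x_m(\cdot)$ are smooth and free of $\supset$-type bifurcations (re-running the polynomial non-vanishing argument of Proposition~\ref{Propo5}), so that the graph of $\Phi_\Sigma$ meets the bisector transversally in at most one point there (re-running the uniqueness argument behind Proposition~\ref{Propo5B}); hence $t_1=t_2$.

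The genuinely delicate step is this last one. The clean ``unique branching point'' mechanism behind Proposition~\ref{Propo5B} and Theorem~\ref{Theo11} was built for superstable cycles, where $q_t^p(0)=0$ makes $0$ a multiple root of $q_t^p$ and the parameter literally a branching point of an on-axis parabola; for a Misiurewicz parameter $q_t^m(0)=q_t^{m+p}(0)\neq 0$, so there is no such branching point to appeal to, and one must instead isolate the right auxiliary object (the eventually-periodic continued radical $\Phi_\Sigma$ together with the periodic-point branch $x_m(\cdot)$) and redo, for it, the transversality and no-$\supset$-bifurcation analysis of Section~\ref{section4}. Carrying that out airtight — in particular verifying that the prescribed itinerary confines $t$ to a single such interval and controlling the endpoint behaviour at $t=2$ — is where the real effort goes.
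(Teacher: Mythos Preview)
Your periodic case is exactly the paper's proof: an order-preserving conjugacy forces identical critical itineraries, and then Theorem~\ref{Theo11}(a) gives $t_1=t_2$. The paper stops there; its entire proof of Theorem~\ref{TheoRigidity} is the two lines ``same symbolic sequence, apply Theorem~\ref{Theo11}(a)'', with no separate treatment of the strictly preperiodic (Misiurewicz) situation. So either the authors tacitly read ``finite orbit'' as ``periodic'' (the theorem sits in the superstable-cycle section, immediately after Theorem~\ref{Theo11}), or their own proof leaves the same gap you identified.

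Your Misiurewicz extension therefore goes beyond what the paper provides. The outline --- encoding the periodic tail as a fixed point of a signed continued radical, then propagating through the transient to obtain $t=\Phi_\Sigma(t)$, and finally arguing uniqueness by a transversality/no-$\supset$-bifurcation argument \`a la Proposition~\ref{Propo5} --- is reasonable in spirit, but as you yourself say, it is a sketch rather than a proof: the implicit branch $x_m(t)$ is not one of the polynomial root branches $\phi_\sigma$ treated in Section~\ref{section3}, so Propositions~\ref{Propo5} and~\ref{Propo5B} do not apply to it directly, and the Taylor-expansion argument of Proposition~\ref{Propo5} would have to be redone for the composite object $\Phi_\Sigma$. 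None of this is needed to match the paper, which simply does not engage with this case.
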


\begin{proof}
Suppose that $q_{t_{1}}$ and $q_{t_{2}}$ are combinatorially equivalent via
an order-preserving conjugacy $\varphi $. Then, the symbolic sequence $\Sigma $ of $\mathcal{O}$ and the symbolic sequence $\Sigma ^{\prime }$ of $\varphi (\mathcal{O})=\mathcal{O}^{\prime }$ are equal$.$ Apply now Theorem \ref{Theo11}(a) to conclude $t_{1}=t_{2}$.
\end{proof}

As mentioned in the Introduction, Thurston's Rigidity implies Milnor's
Monotonicity Conjecture for the quadratic maps \cite{Bruin2015}. In Remark \ref{RemarkFinal} we sketched how this derivation could be done using
Theorem \ref{Theo11}, which is a sort of symbolic version of Thurston's
Rigidity.

\subsection{Dark lines and the Misiurewicz points}\label{section52}

To wrap up our excursion into the superstable cycles of the quadratic
family, let us remind that the \textquotedblleft dark
lines\textquotedblright\ in the bifurcation diagram (Figure \ref{figure3})
that go through\ the chaotic regions or build their boundaries are
determined by the superstable periodic orbits. To briefly study those dark
lines, we resort again to the polynomials $P_{n}(t)\equiv q_{t}^{n}(0)$
introduced in Equations (\ref{psi(t)}) and (\ref{phi(t)}).

We have already discussed in Section \ref{section51} how to pinpoint superstable cycles $(0,P_{1}(t),...,P_{p-1}(t))^{\infty }$ in the parametric interval using
symbolic sequences and root branches. More generally, consider orbits of $0$
that are eventually periodic, that is:\begin{equation}
(P_{n}(t))_{n=0}^{\infty
}=(0,t,P_{2}(t),...,P_{h-1}(t),(P_{h}(t),P_{h+1}(t),...,P_{h+T-1}(t))^{\infty }).  \label{dark}
\end{equation}Such parametric values are called \textit{Misiurewicz points} \cite{Misiu1991} and denoted as $M_{h,T}$, where we assume that $h\geq 1$ is the
minimal length of the preperiodic \textquotedblright tail\textquotedblright\
(the preperiod)\ and $T\geq 1$ is the prime period of the periodic cycle.
Therefore, if $M_{h,T}$ is a Misiurewicz point, then\begin{equation}
P_{h}(M_{h,T})=P_{h+T}(M_{h,T})=P_{h+2T}(M_{h,T})=...,  \label{phi_h}
\end{equation}so that the curves $P_{h+kT}(t)$, $k\geq 0$, meet at $t=M_{h,T}$ in the $(t,x)$-plane.

For example,\begin{equation}
(P_{n}(2))_{n=0}^{\infty }=(0,2,(-2)^{\infty }),  \label{psi(2)}
\end{equation}i.e., the orbit of $0$ hits a (repelling) fixed point after only two
iterations. Comparison of Equations (\ref{psi(2)}) and (\ref{dark}) shows
that $2=M_{2,1}$, therefore, all curves $P_{n}(t)$ with $n\geq 2$ meet at $t=2$ (see Equation (\ref{phi(t)B})).

\begin{figure}[tbp]
\centering
\includegraphics[height=4cm]{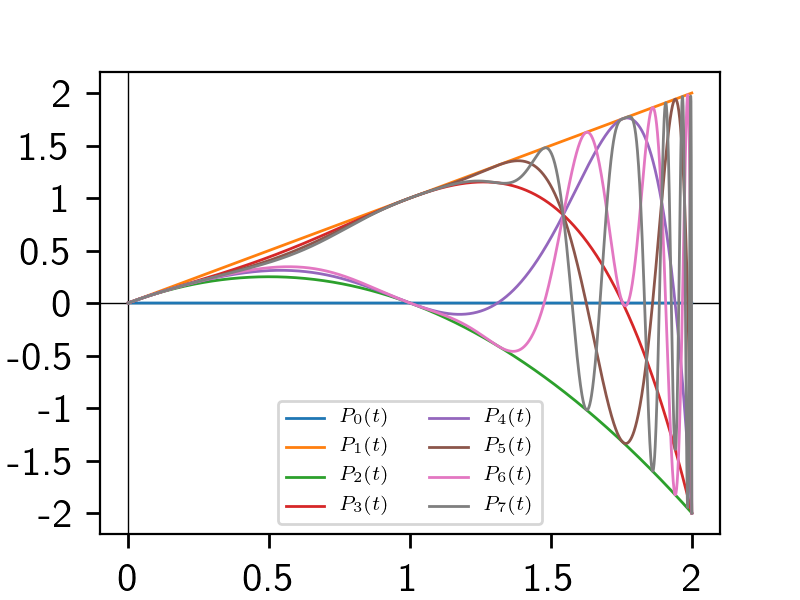}
\caption{The polynomials $P_{n}(t):=q_{t}^{n}(0)$ for $0\leq n\leq 7$.}
\label{figure10}
\end{figure}

The graphs of $P_{0}(t),...,P_{7}(t)$ are shown in Figure \ref{figure10}. As
a first observation, one can recognize the main features of the chaotic band$\allowbreak $s in the bifurcation diagram of the quadratic family, in
particular band merging. We also see that the curves $x=P_{n}(t)$ intersect
transversally or tangentially; all these intersections are related to
important aspects of the dynamic. Chaos bands merge where those curves
intersect transversally, while periodic windows open where they intersect
tangentially the upper and lower edges. Moreover, the functions $P_{n}(t)$
intersect the $t$-axis precisely at the parameter values for which $0$ is
periodic:\begin{equation*}
P_{n}(t_{0})\equiv q_{t_{0}}^{n}(0)=0\;\;\Leftrightarrow \;\;\phi _{\sigma
_{1},...,\sigma _{n}}(t_{0})=0.
\end{equation*}Besides $P_{n}(0)=0$ for all $n\geq 0$ (Equation (\ref{psi(0)})) and $P_{n}(1)=0$ for all $n\geq 1$ (Equation (\ref{psi(t)})), the following zeros
of $P_{n}(t)$ can be read in Figure 9: $P_{3}(t)=0$ at $t\simeq 1.7549$; $P_{4}(t)=0$ at $t=1.3107$ and $1.9408$; and $P_{5}(t)=0$ at $t=1.6254$, $1.8607$ and $1.9854$.

As way of illustration, we will calculate $M_{3,1}$, the perhaps most
prominent Misiurewicz point in Figure \ref{figure10}, which corresponds to
the merging of the two chaotic bands into a single band. By Equation (\ref{phi_h}) with $h=3$ and $T=1$,  
\begin{equation*}
P_{3}(M_{3,1})=P_{4}(M_{3,1})=P_{5}(M_{3,1})=...
\end{equation*}From $P_{3}(M_{3,1})=P_{4}(M_{3,1})$ and Equation (\ref{phi(t)}) if follows
that $M_{3,1}$ is the unique real solution in $(0,2)$ of the equation 
\begin{equation*}
4-6t+6t^{2}-4t^{3}+t^{4}=0,
\end{equation*}namely, $M_{3,1}=1.5436890..$. For more in-depth information, the interested
reader is referred to \cite{Romera1996,Hutz2015}.

Among the many remarkable properties of the Misiurewicz points, we highlight
only the following two: (i) the periodic cycle $(P_{h}(M_{h,T}),P_{h+1}(M_{h,T}),...,P_{h+T-1}(M_{h,T}))^{\infty }$ in
Equation (\ref{dark}) is repelling \cite{Douady1984}, and (ii) $q_{t}(x)$
has an absolutely continuous invariant measure for each $t=M_{h,T}$ \cite{Misiu1981,Rychlik1992}.

\section{Conclusion}\label{section6}

In the previous sections we have revisited two classical topics of the
continuous dynamics of interval maps: entropy monotonicity (Section \ref{section4}) and
superstable cycles (Section \ref{section5}) for the quadratic family $q_{t}(x)$ (Section \ref{section3}). The novelty consists in the starting point: we use Equation (\ref{h(q_t)}) for $h(q_{t})$, the topological entropy of $q_{t}$, where $s_{n}(t)$ is
the number of transversal intersections of the polynomial curves $q_{t}^{n}(x)$ with the $x$-axis. Equation (\ref{h(q_t)}) and several
numerical schemes for its computation were derived in \cite{Amigo2012,Amigo2014,Amigo2015}. This approach leads directly to the root
functions ($\phi _{\sigma }(t)$), bifurcation points ($t_{\sigma }$) and
smoothness domains ($\mathbf{sdom\,}\phi _{\sigma }$) studied in Sections \ref{section31} and \ref{section32}. It is precisely the structure of the smoothness domains, $\mathbf{sdom\,}\phi _{\sigma }=(t_{\sigma },2]$ (Proposition \ref{Propo5B}), which
implies that $s_{n}(t)$ is a nondecreasing staircase function for each $n\geq
1$ (Theorem \ref{Theo8}) and, in turn, that the function $t\mapsto h(q_{t})$ is monotone
(Theorem \ref{Theo9}). Unlike existing proofs \cite{Douady1984,Milnor1988,Douady1995,Tsujii2000}, Theorem \ref{Theo9} proves Milnor's
Monotonicity Conjecture via real analysis. This also shows that the
transversal intersections of a multimodal map and its iterates with the
critical lines is a useful tool in one-dimensional dynamics. Sections \ref{section22} and \ref{section4} contains further details on Milnor's Monotonicity Conjecture and its
generalization to multimodal maps.

In Section \ref{section51} we derived some basic results on the superstable cycles of the
quadratic family, in particular Theorem \ref{Theo11}, which is a sort of
symbolic version of Thurston Rigidity (Theorem \ref{TheoRigidity}). The
commonalities between entropy monotonicity and the superstable cycles of the
quadratic maps go beyond the techniques used, namely, root branches,
bifurcation points, transversality, and a geometrical language. There is
also a flow of ideas in both directions. We started with the topological
entropy and worked our way towards the superstable cycles, but the other
direction works too, although we only indicated this possibility in Remark \ref{RemarkFinal}. We also made a brief excursion into the preperiodic
orbits of the critical point in Section \ref{section52} (Misiurewicz points). In
conclusion, both topics complement and intertwine in remarkable ways, as
well as being interesting on their own.

%%%%%%%%%%%%%%%%%%%%%%%%%%%%%%%%%%%%%%%%%%%%%%%
%%%%%%%%%%%%%%%%%%%%%%%%%%%%%%%%%%%%%%%%%%
\section*{Acknowledgments}
This work was financially supported by the Spanish Ministry of Science and Innovation, grant PID2019-108654GB-I00.

\bibliographystyle{unsrt}  
\bibliography{references}  

\begin{thebibliography}{10}

\bibitem{Walters1982}
Peter Walters.
\newblock {\em An {Introduction} to {Ergodic} {Theory}}.
\newblock Graduate {Texts} in {Mathematics}. Springer-Verlag, New York, 1982.

\bibitem{Bowen1971}
Rufus Bowen.
\newblock Entropy for group endomorphisms and homogeneous spaces.
\newblock {\em Transactions of the American Mathematical Society},
  153:401--401, 1971.

\bibitem{Milnor1988}
John Milnor and William Thurston.
\newblock On iterated maps of the interval.
\newblock In James~C. Alexander, editor, {\em Dynamical {Systems}}, Lecture
  {Notes} in {Mathematics}, pages 465--563, Berlin, Heidelberg, 1988. Springer.

\bibitem{Dias1}
J.~{Dias de Deus}, R.~Dil{\~a}o, and J.~{Taborda Durate}.
\newblock Topological entropy and approaches to chaos in dynamics of the
  interval.
\newblock {\em Physics Letters A}, 90(1-2):1--4, June 1982.

\bibitem{Bandt2002}
Christoph Bandt, Gerhard Keller, and Bernd Pompe.
\newblock Entropy of interval maps via permutations.
\newblock {\em Nonlinearity}, 15(5):1595--1602, September 2002.

\bibitem{Hirata2003}
Yoshito Hirata and Alistair~I. Mees.
\newblock Estimating topological entropy via a symbolic data compression
  technique.
\newblock {\em Physical Review E}, 67(2):026205, February 2003.

\bibitem{Amigo2012}
Jos{\'e}~Mar\'{\i}a Amig{\'o}, Rui Dil{\~a}o, and {\'A}ngel Gim{\'e}nez.
\newblock Computing the {Topological} {Entropy} of {Multimodal} {Maps} via
  {Min}-{Max} {Sequences}.
\newblock {\em Entropy}, 14(4):742--768, April 2012.

\bibitem{Amigo2014}
Jos{\'e} Amig{\'o} and {\'A}ngel Gim{\'e}nez.
\newblock A {Simplified} {Algorithm} for the {Topological} {Entropy} of
  {Multimodal} {Maps}.
\newblock {\em Entropy}, 16(2):627--644, January 2014.

\bibitem{Amigo2015}
Jos{\'e}~M. Amig{\'o} and {\'A}ngel Gim{\'e}nez.
\newblock Formulas for the topological entropy of multimodal maps based on
  min-max symbols.
\newblock {\em Discrete \& Continuous Dynamical Systems - B}, 20(10):3415,
  2015.

\bibitem{Milnor1992}
John Milnor.
\newblock Remarks on {Iterated} {Cubic} {Maps}.
\newblock {\em Experimental Mathematics}, 1(1):5--24, January 1992.

\bibitem{Douady1984}
Adrien Douady and J.~H Hubbard.
\newblock {\em {\'E}tude dynamique des polyn{\^o}mes complexes {I}, {II}}.
\newblock Universit{\'e} de Paris-Sud, D{\'e}p. de Math{\'e}matique, Orsay,
  France, 1984.

\bibitem{Douady1995}
Adrien Douady.
\newblock Topological {Entropy} of {Unimodal} {Maps}.
\newblock In Bodil Branner and Poul Hjorth, editors, {\em Real and {Complex}
  {Dynamical} {Systems}}, pages 65--87. Springer Netherlands, Dordrecht, 1995.

\bibitem{Tsujii2000}
Masato Tsujii.
\newblock A simple proof for monotonicity of entropy in the quadratic family.
\newblock {\em Ergodic Theory and Dynamical Systems}, 20(3):925--933, June
  2000.

\bibitem{Bruin2013}
Henk Bruin and Sebastian van Strien.
\newblock On the structure of isentropes of polynomial maps.
\newblock {\em Dynamical Systems}, 28(3):381--392, September 2013.

\bibitem{Bruin2015}
Henk Bruin and Sebastian van Strien.
\newblock Monotonicity of entropy for real multimodal maps.
\newblock {\em Journal of the American Mathematical Society}, 28(1):1--61,
  2015.

\bibitem{Singer1978}
David Singer.
\newblock Stable {Orbits} and {Bifurcation} of {Maps} of the {Interval}.
\newblock {\em SIAM Journal on Applied Mathematics}, 35(2):260--267, September
  1978.

\bibitem{Melo1993}
Welington~de Melo and Sebastian~van Strien.
\newblock {\em One-{Dimensional} {Dynamics}}.
\newblock Ergebnisse der {Mathematik} und ihrer {Grenzgebiete}. 3. {Folge} /
  {A} {Series} of {Modern} {Surveys} in {Mathematics}. Springer-Verlag, Berlin
  Heidelberg, 1993.

\bibitem{Misiu}
M~Misiurewicz and W~Szlenk.
\newblock Entropy of piecewise monotone mappings.
\newblock {\em Studia Math}, 67:45--63, 1980.

\bibitem{Katok2007}
Anatole Katok.
\newblock Fifty years of entropy in dynamics: 1958--2007.
\newblock {\em Journal of Modern Dynamics}, 1(4):545--596, 2007.

\bibitem{Amigo2015B}
Jos{\'e}~M. Amig{\'o}, Karsten Keller, and Valentina~A. Unakafova.
\newblock On entropy, entropy-like quantities, and applications.
\newblock {\em Discrete \& Continuous Dynamical Systems - B},
  20(10):3301--3343, 2015.

\bibitem{Amigo2018}
Jos{\'e}~M. Amig{\'o}, S{\'a}muel~G. Balogh, and Sergio Hern{\'a}ndez.
\newblock A {Brief} {Review} of {Generalized} {Entropies}.
\newblock {\em Entropy}, 20(11):813, November 2018.

\bibitem{Alseda}
Luis Alseda, Jaume Llibre, and Michal Misiurewicz.
\newblock {\em Combinatorial {Dynamics} {And} {Entropy} {In} {Dimension} {One}
  (2nd {Edition})}.
\newblock World Scientific Publishing Company, October 2000.

\bibitem{Cockram2020}
Noah Cockram and Ana Rodrigues.
\newblock A survey on the topological entropy of cubic polynomials.
\newblock {\em arXiv:2006.13795 [math]}, June 2020.

\bibitem{Dilao2012}
Rui Dil{\~a}o and Jos{\'e} Amig{\'o}.
\newblock Computing the topological entropy of unimodal maps.
\newblock {\em International Journal of Bifurcation and Chaos}, 22(06):1250152,
  June 2012.

\bibitem{Bruin1995}
Henk Bruin.
\newblock Non-monotonicity of entropy of interval maps.
\newblock {\em Physics Letters A}, 202(5-6):359--362, July 1995.

\bibitem{Polya1970}
George Polya and Gabor Szeg{\"o}.
\newblock {\em Problems and {Theorems} in {Analysis} {I}: {Series}. {Integral}
  {Calculus}. {Theory} of {Functions}}.
\newblock Classics in {Mathematics}. Springer-Verlag, Berlin Heidelberg, 1998.

\bibitem{Graczyk1997}
Jacek Graczyk and Grzegorz Swiatek.
\newblock Generic {Hyperbolicity} in the {Logistic} {Family}.
\newblock {\em The Annals of Mathematics}, 146(1):1--52, July 1997.

\bibitem{Lyubich1997}
Mikhail Lyubich.
\newblock Dynamics of quadratic polynomials, {I}--{II}.
\newblock {\em Acta Mathematica}, 178(2):185--297, 1997.

\bibitem{Hao1998}
Bailin Hao and Wei-mou Zheng.
\newblock {\em Applied {Symbolic} {Dynamics} {And} {Chaos}}.
\newblock World Scientific Publishing Co, Singapore, July 1998.

\bibitem{Metropolis1973}
N~Metropolis, M.L Stein, and P.R Stein.
\newblock On finite limit sets for transformations on the unit interval.
\newblock {\em Journal of Combinatorial Theory, Series A}, 15(1):25--44, July
  1973.

\bibitem{Misiu1991}
Micha{\l} Misiurewicz and Zbigniew Nitecki.
\newblock {\em Combinatorial {Patterns} for {Maps} of the {Interval}}.
\newblock Number no. 456 in Memoirs of the {American} {Mathematical} {Society}.
  American Mathematical Soc., Providence, R.I, 1991.

\bibitem{Romera1996}
M.~Romera, G.~Pastor, and F.~Montoya.
\newblock Misiurewicz points in one-dimensional quadratic maps.
\newblock {\em Physica A: Statistical Mechanics and its Applications},
  232(1-2):517--535, October 1996.

\bibitem{Hutz2015}
Benjamin Hutz and Adam Towsley.
\newblock Misiurewicz points for polynomial mapsand transversality.
\newblock {\em New York Journal of Mathematics}, 21:297--319, 2015.

\bibitem{Misiu1981}
Micha{\l} Misiurewicz.
\newblock Absolutely continuous measures for certain maps of an interval.
\newblock {\em Publications math{\'e}matiques de l'IH{\'E}S}, 53(1):17--51,
  December 1981.

\bibitem{Rychlik1992}
Marek Rychlik and Eugene Sorets.
\newblock Regularity and other properties of absolutely continuous invariant
  measures for the quadratic family.
\newblock {\em Communications in Mathematical Physics}, 150(2):217--236,
  November 1992.

\end{thebibliography}
%%% Remove comment to use the external .bib file (using bibtex).
%%% and comment out the ``thebibliography'' section.

%%%%%%%%%%%%%%%%%%%%%%%%%%%%%%%%%%%%%%%%%%%%%%%
\end{document}